\newtheorem{theorem}[equation]{Theorem}
\newtheorem{definition}[equation]{Definition}
\newtheorem{lemma}[equation]{Lemma}
\newtheorem{proposition}[equation]{Proposition}
\theoremstyle{plain}
\newtheorem{thm}[equation]{Theorem}
\newtheorem{prop}[equation]{Proposition}
\newtheorem{coro}[equation]{Corollary}
\newtheorem{lem}[equation]{Lemma}
\theoremstyle{definition}
\newtheorem{defi}[equation]{Definition}
\theoremstyle{remark}
\newtheorem{nota}[equation]{Notation}
\newtheorem{rem}[equation]{Remark}
\theoremstyle{definition}
\newtheorem{exa}[equation]{Example}
\newtheorem{constru}[equation]{Construction}
\numberwithin{equation}{section}
\newcommand{\comment}[1]{} 
\def\ov{\overline}
\def\ol{\ov}
\def\ul{\underline}
\def\lra{\longrightarrow}
\def\iso{\stackrel{\cong}\lra}
\def\C{\mathbb{C}}
\def\R{\mathbb{R}}
\def\N{\mathbb{N}}
\def\Z{\mathbb{Z}}
\def\V{\mathbb{V}}
\def\cA{\mathcal A}
\def\cB{\mathcal{B}}
\def\cC{\mathcal{C}}
\def\cE{\mathcal{E}}
\def\cG{\mathcal{G}}
\def\cI{\mathcal{I}}
\def\caL{\mathcal L}
\def\cN{\mathcal{N}}
\def\cO{\mathcal{O}}
\def\cP{\mathcal{P}}
\def\cR{\mathcal{R}}
\def\cS{\mathcal{S}}
\def\o{\mathfrak{r}}
\def\ru{\mathfrak{j}}
\def\d{\mathfrak{d}}
\def\fX{\mathfrak{X}}
\def\dom{\operatorname{dom}}
\def\reg{\operatorname{reg}}
\def\sing{\operatorname{sing}}
\def\sink{\operatorname{sink}}
\def\infem{\operatorname{inf}}
\def\sour{\operatorname{sour}}
\def\pssn{\operatorname{p-ssn}}
\def\tight{\operatorname{tight}}
\def\End{\operatorname{End}}
\def\tight{\mathrm{tight}}
\def\opq{\mathcal{O}^p(Q)}
\newcommand{\mspan}{\mbox{span}}
\begin{document}

\title{$L^p$ operator algebras associated with oriented graphs}

\author{Guillermo Corti\~nas}
\address{Dept.\ Matem\'atica-Inst.\ Santal\'o, FCEyN,
Universidad de Buenos Aires,
Ciudad Universitaria\\ (1428) Buenos Aires, Argentina}
\email{gcorti@dm.uba.ar}
\author{Ma. Eugenia Rodriguez}
\address{Departamento de Ciencias Exactas, Ciclo B\'asico Com\'un, 
Universidad de Buenos Aires\\
Ciudad Universitaria, (1428) Buenos Aires, Argentina}
\email{merodrig@dm.uba.ar}

\thanks{Both authors were supported by grants UBACyT 20021030100481BA and  PICT 2013-0454. Corti\~nas research was supported by Conicet
 and by grant MTM2015-65764-C3-1-P (Feder funds).}
\begin{abstract}
For each $1\le p<\infty$ and each countable oriented graph $Q$ we introduce an $L^p$-operator algebra $\cO^p(Q)$ which contains the Leavitt path $\C$-algebra
$L_Q$ as a dense subalgebra and is universal for those $L^p$-representations of $L_Q$ which are spatial in the sense of N.C. Phillips. For $\cR_n$ the graph with one vertex and $n$ loops ($2\le n\le \infty$), $\cO^p(\cR_n)=\cO^p_n$, the $L^p$-Cuntz algebra introduced by Phillips. If $p\notin\{1,2\}$ and $\cS(Q)$ is the inverse semigroup generated by $Q$,  $\cO^p(Q)=F_\tight^p(\cS(Q))$ is the tight semigroup  $L^p$-operator algebra introduced by Gardella and Lupini. We prove that $\cO^p(Q)$ is simple as an $L^p$-operator algebra if and only if $L_Q$ is simple, and that in this case it is isometrically isomorphic to the closure $\ol{\rho(L_Q)}$ of the image of any nonzero spatial $L^p$-representation $\rho:L_Q\to\caL(L^p(X))$. We also show that if $L_Q$ is purely infinite simple and $p\ne p'$, then there is no nonzero continuous homomorphism $\cO^p(Q)\to\cO^{p'}(Q)$. Our results generalize some similar results obtained by Phillips for $L^p$-Cuntz algebras.
\end{abstract}

\date{}
\maketitle

\section{Introduction}\label{sec:intro}

Let $Q$ be a countable oriented graph, let $Q^0$ and $Q^1$ be the sets of vertices and edges,  and let $L_Q$ be the Leavitt path $\C$-algebra. For $1\le p<\infty$ we call a representation $\rho:L_Q\to\caL(L^p(X))$ spatial if
$X$ is a $\sigma$-finite measure space and $\rho$ maps the elements of $Q^0\sqcup Q^1\sqcup (Q^1)^*$ to partial isometries which are spatial in the sense of \cite{chris1}*{Definition 6.4}. Each spatial representation $\rho$ induces a seminorm on $L_Q$ via $||a||_\rho=||\rho(a)||$; the supremum $\|\ \ \|$ of these seminorms is a norm (Proposition \ref{prop:sigmarep}) and we write $\cO^p(Q)$ for the completion of $(L_Q,\|\ \ \|)$. For $p\notin \{1,2\}$, $\cO^p(Q)$ agrees with the tight semigroup algebra introduced by Gardella and Lupini in \cite{gardelupi} (Proposition \ref{prop:opq=fps}). We prove the following.

\begin{thm}\label{thm:simpleintro}(Simplicity theorem)
Let $Q$ be a countable graph and let $1\le p<\infty$, $p\ne 2$. The following are equivalent.
\item[i)] $L_Q$ is simple.
\item[ii)] Every nonzero spatial $L^p$-representation of $L_Q$ is injective.
\item[iii)] Every nondegenerate contractive nonzero $L^p$-representation of $\cO^p(Q)$ is injective.
\goodbreak
If furthermore we have either that $Q^0$ is finite or that $p>1$, then the above conditions are also equivalent to:
\item[iv)] For every $L^p$-operator algebra $B$, every contractive, nonzero homomorphism $\cO^p(Q)\to B$ is injective. 
\end{thm}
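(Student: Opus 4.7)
The plan is to prove the four implications (i) $\Rightarrow$ (ii), (ii) $\Rightarrow$ (iii), (iii) $\Rightarrow$ (ii), and (ii) $\Rightarrow$ (i), establishing (i) $\Leftrightarrow$ (ii) $\Leftrightarrow$ (iii); then (iii) $\Leftrightarrow$ (iv) is handled under the extra hypothesis. The implication (i) $\Rightarrow$ (ii) is pure algebra: the kernel of any nonzero algebra homomorphism out of a simple algebra is the zero ideal. For (iii) $\Rightarrow$ (ii), any nonzero spatial $L^p$-representation $\rho\colon L_Q \to \caL(L^p(X))$ satisfies $\|\rho(a)\|\le \|a\|_{\cO^p(Q)}$ by the definition of the universal norm, so extends to a contractive homomorphism $\bar\rho\colon \cO^p(Q) \to \caL(L^p(X))$; after cutting down to the essential subspace we may assume $\bar\rho$ is nondegenerate and apply (iii). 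For the contrapositive of (ii) $\Rightarrow$ (i): if $L_Q$ has a proper nonzero two-sided ideal $I$, the standard Leavitt path algebra structure theory (graded ideals correspond to hereditary saturated subsets, and the quotients by them are again Leavitt path algebras of quotient graphs) yields an $L_{Q'}$ admitting a canonical spatial $L^p$-representation constructed earlier in the paper; composing with the quotient map produces a nonzero non-injective spatial representation of $L_Q$. The non-graded case is handled by an analogous construction using cycles without exits.

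The heart of the proof is (ii) $\Rightarrow$ (iii). Given a nonzero nondegenerate contractive $\pi\colon \cO^p(Q) \to \caL(L^p(Y))$, the first step is to show that $\pi|_{L_Q}$ is itself a spatial $L^p$-representation of $L_Q$, i.e.\ that $\pi$ sends each generator in $Q^0 \sqcup Q^1 \sqcup (Q^1)^*$ to a spatial partial isometry in the sense of Phillips. This should follow from rigidity properties of spatial partial isometries presumably established earlier in the paper: such elements are characterized by algebraic relations together with precise norm constraints on $s$, $s^*$ and on the range and source projections, all of which are preserved by contractive homomorphisms out of $\cO^p(Q)$. Then (ii) gives that $\pi|_{L_Q}$ is injective, and to upgrade this to injectivity of $\pi$ on all of $\cO^p(Q)$ it suffices to prove that $\pi|_{L_Q}$ is in fact isometric. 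Here the simplicity of $L_Q$ enters decisively: I expect the argument to proceed via a canonical gauge action (a circle action rotating the edge generators by a common phase) together with a contractive conditional expectation onto the gauge-fixed core subalgebra; on this core the $L^p$-operator norm is intrinsic thanks to its matrix-like structure coming from the finite paths of $Q$, and the conditional-expectation formula combined with simplicity of $L_Q$ propagates the isometry property from the core to all of $L_Q$.

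Finally, (iv) $\Rightarrow$ (iii) is trivial since $\caL(L^p(Y))$ is itself an $L^p$-operator algebra. For (iii) $\Rightarrow$ (iv), any $L^p$-operator algebra $B$ embeds isometrically in some $\caL(L^p(Y))$, so a contractive $\phi\colon \cO^p(Q) \to B$ becomes a contractive $L^p$-representation; the hypothesis that $Q^0$ is finite (so $\cO^p(Q)$ is unital, with unit the sum of vertex projections) or that $p>1$ (so a bounded approximate identity of finite sums of vertex projections yields a nondegenerate compression via a standard argument) allows us to arrange nondegeneracy, after which (iii) applies. The principal obstacle throughout is the isometry step in (ii) $\Rightarrow$ (iii): verifying that an injective spatial representation of a simple $L_Q$ is automatically isometric requires careful $L^p$-analysis of the gauge-fixed core subalgebra and a spectral-permanence-type argument transferring norms from this core up to $L_Q$, paralleling Phillips' key step in his treatment of the $L^p$-Cuntz algebras.
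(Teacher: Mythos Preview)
Your handling of (i)$\Rightarrow$(ii), (iii)$\Rightarrow$(ii), (ii)$\Rightarrow$(i), and (iii)$\Leftrightarrow$(iv) is correct and matches the paper's reasoning closely. In particular, the spatiality of $\pi|_{L_Q}$ for a nondegenerate contractive $\pi$ is exactly the content of the paper's spatiality criterion (Theorem~\ref{thm:contraspat}/\ref{thm:contraspat15}), and your contrapositive for (ii)$\Rightarrow$(i) via hereditary saturated subsets and cycles without exits is precisely what the paper does.

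The gap is in the isometry step of (ii)$\Rightarrow$(iii). You propose to show that an injective spatial representation of a simple $L_Q$ is automatically isometric via the gauge action and a contractive conditional expectation onto the core $(L_Q)_0$, arguing that the norm on the core is intrinsic and then ``propagating'' isometry to all of $L_Q$. This is the standard $C^*$-algebraic gauge-invariant uniqueness strategy, but it does not go through for $L^p$-operator algebras when $p\neq 2$. The $C^*$-propagation relies on $\|a\|^2=\|a^*a\|$ together with positivity and faithfulness of the expectation; none of these tools are available here (the involution on $L_Q$ is not isometric for $p\neq 2$, and an injective contractive homomorphism between $L^p$-operator algebras need not be isometric). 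Your sentence ``the conditional-expectation formula combined with simplicity of $L_Q$ propagates the isometry property'' does not correspond to any mechanism that actually works in this setting, and your reference to ``Phillips' key step'' is a misidentification: Phillips does \emph{not} use a gauge-invariant uniqueness argument for $\cO^p_n$.

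What the paper actually does is prove a separate Uniqueness Theorem (Theorem~\ref{thm:indep}): when $L_Q$ is simple, \emph{any} nonzero spatial $L^p$-representation induces the universal norm. The proof first reduces to nonsingular source-free graphs via desingularization and source removal (Section~8), then follows Phillips' free-representation technique: one shows that tensoring with the bilateral shift on $\ell^p(\Z)$ produces a \emph{free} spatial representation dominating the original norm, and then that any approximately free spatial norm is dominated by any other spatial norm (Proposition~\ref{prop:larga}). The latter is a delicate direct $L^p$-estimate using a combinatorial lemma (Lemma~\ref{lem:disj}) that exploits simplicity of $L_Q$ through cofinality and the existence of exits for cycles. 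Once this uniqueness is in hand, (i)$\Rightarrow$(iii) is immediate: a nondegenerate contractive $\hat\rho$ restricts to a spatial $\rho$ on $L_Q$, which is isometric by uniqueness, so $\hat\rho$ is isometric and in particular injective.
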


Condition iv) says that $\cO^p(Q)$ is simple as an $L^p$-operator algebra. Since every $L^p$-operator algebra is isometrically embedded in $\caL(L^p(X))$ for some $\sigma$-finite measure space $X$, simplicity is equivalent to the condition that every contractive nonzero representation $\rho:\cO^p(Q)\to \caL(L^p(X))$, degenerate or not, be injective. We show (using a classical result of And\^o \cite{ando} and a recent result of Gardella and Thiel \cite{gardethiel2}) that if either $Q^0$ is finite or $p>1$, then the restriction of $\rho$ to $L_Q$ factors through a nondegenerate spatial representation; this allows us to prove that iii)$\iff$iv). 

To prove Theorem \ref{thm:simpleintro} we first show the following uniqueness theorem.

\begin{thm}\label{thm:indepintro}(Uniqueness theorem) 
Let $Q$ be a countable graph such that $L_Q$ is simple. Let $1\le p<\infty$, $X$ a $\sigma$-finite measure space and $\rho:L_Q\to \caL(L^p(X))$ a nonzero spatial representation. Then the canonical map $\cO^p(Q)\to \ol{\rho(L_Q)}$ is an isometric isomorphism. 
\end{thm}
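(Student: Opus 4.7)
Since $\|a\|_{\cO^p(Q)} = \sup_\tau \|\tau(a)\|$ where $\tau$ runs over spatial representations of $L_Q$, the extension $\pi: \cO^p(Q) \to \overline{\rho(L_Q)}$ of $\rho$ is automatically contractive with dense image. To show it is an isometric isomorphism, I must prove $\|\tau(a)\| \le \|\rho(a)\|$ for every spatial $\tau$ and every $a \in L_Q$. Fix such a $\tau$. Since $L_Q$ is simple, both $\rho$ and $\tau$ are injective, as their kernels are two-sided ideals of $L_Q$. So $\tau$ and $\rho$ give two faithful embeddings of $L_Q$ into $L^p$-operator algebras, and the content of the theorem is that these embeddings agree in norm.

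My plan is first to establish norm equality on the diagonal subalgebra $D_Q \subset L_Q$ spanned by the idempotents $\alpha\alpha^*$ for finite paths $\alpha$, and then bootstrap to all of $L_Q$. For each generator $\alpha\alpha^*$, any spatial representation sends $\alpha\alpha^*$ to a hermitian idempotent in Phillips' sense, that is, multiplication by the characteristic function of a measurable subset of the ambient $L^p$-space. This identifies $\tau|_{D_Q}$ with the canonical representation of $D_Q$ as locally constant functions on the boundary path space of $Q$, whose operator norm is the sup-norm; injectivity of $\tau$ ensures that no nonzero element collapses. Hence $\|\tau(d)\| = \|\rho(d)\|$ for every $d \in D_Q$. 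I would then extend to the core $L_Q^0 = \mspan\{\alpha\beta^* : |\alpha|=|\beta|\}$, which is locally a matrix algebra over $D_Q$ with matrix units $\alpha\beta^*$. Spatial partial isometries send these matrix units to mutually compatible spatial operators permuting orthogonal spatial projections, so the $L^p$-matrix-algebra norms on finite-dimensional blocks are fully determined by the diagonal norms. This yields $\|\tau(a)\| = \|\rho(a)\|$ for every $a \in L_Q^0$.

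The final step is to extend the equality from $L_Q^0$ to $L_Q$ using simplicity. One approach: via the identification $\cO^p(Q) = F_\tight^p(\cS(Q))$ of Proposition~\ref{prop:opq=fps} (for $p \notin \{1,2\}$), invoke an inverse-semigroup uniqueness theorem in the Gardella--Lupini framework. A more direct route, modeled on the Cuntz--Krieger uniqueness argument, is: given $a \in L_Q$ and $\epsilon > 0$, exploit Condition (L) plus cofinality (which together are equivalent to simplicity of $L_Q$ for row-finite countable graphs) to find elements of the form $\beta^*$ and $\gamma$ such that $c = \beta^* a \gamma$ lies in $L_Q^0$ and satisfies $\|\tau(c)\| > \|\tau(a)\| - \epsilon$, while submultiplicativity gives $\|\rho(c)\| \le \|\rho(a)\|$; the already-proved equality on $L_Q^0$ then yields $\|\tau(a)\| \le \|\rho(a)\| + \epsilon$. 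The main obstacle is exactly this last step: in the $C^*$ setting one uses the $C^*$-identity together with a faithful gauge-invariant conditional expectation onto the core to reduce to positive elements, but in the $L^p$ setting for $p \ne 2$ neither tool is available, since the gauge circle action need not act isometrically. The reduction must therefore be carried out working directly with the spatial structure of the partial isometries, and this is the technically delicate heart of the argument.
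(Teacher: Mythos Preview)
Your outline diverges substantially from the paper's proof, and the final step --- which you yourself flag as the heart of the matter --- is not actually carried out. The first two stages (norm uniqueness on $D_Q$ and on the core $(L_Q)_0$) are essentially correct: for any injective spatial representation the idempotents $\alpha\alpha^*$ go to nonzero multiplication operators, and by the argument of Proposition~\ref{prop:mat} each finite matricial block $(L_Q)_{0,n}$ is isometrically conjugate to its standard $M^p_{\ul{n}}$-model, so all spatial norms coincide on the core. But the passage from $(L_Q)_0$ to $L_Q$ via a compression $c=\beta^*a\gamma$ does not go through: you would need $\|\tau(c)\|>\|\tau(a)\|-\epsilon$, yet you only have $\|\tau(\beta^*)\|,\|\tau(\gamma)\|\le 1$, which gives the inequality in the wrong direction. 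In the $C^*$ case one first replaces $a$ by $a^*a$ and uses the $C^*$-identity together with the faithful gauge-invariant expectation to reduce to a positive element of the core; without the $C^*$-identity there is no reason a compression should approximately preserve $\|\tau(a)\|$, and you offer no substitute mechanism. So as written the proposal has a genuine gap.

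The paper's route is quite different and avoids the core entirely. First it reduces, via desingularization and source removal (Proposition~\ref{prop:seminorms}), to nonsingular source-free $Q$. Then, given any spatial $\rho$, it tensors with the bilateral shift on $\ell^p(\Z)$ to obtain a \emph{free} spatial representation $\rho^u$ with $\|\rho^u(a)\|\ge\|\rho(a)\|$ (Lemma~\ref{lem:rho^u}, Proposition~\ref{prop:ushift}). The key comparison is Proposition~\ref{prop:larga}: if $\rho$ is approximately free and $\phi$ is any spatial representation, then $\|\rho(a)\|\le\|\phi(a)\|$. Its proof does not compress $a$ into the core; instead it uses simplicity (via cofinality and Condition~(L), in Lemma~\ref{lem:disj}) to locate, for each vertex $v$, a measurable set $E_v\subset Y$ whose images under all $S_\gamma$ with $|\gamma|\le N$ are pairwise disjoint, and then builds an explicit isometry $u:L^p(X)\to L^p(X\times Y)$ that intertwines $\rho(b)$ with $(1\otimes\phi)(b)$ on a near-optimal vector. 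Applying this twice shows any two approximately free spatial seminorms agree, and the shift trick shows every spatial seminorm is dominated by one of these. This is the replacement for the conditional-expectation machinery that your outline is missing.
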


Specializing Theorem \ref{thm:indepintro} to the case when $Q$ has only one vertex recovers N.C. Phillips' uniqueness result for $L^p$-analogues of Cuntz algebras  \cite{chris1}*{Theorem 8.7}. We also show (Theorem \ref{thm:opoq}) that if $Q$ and $Q'$ are countable graphs with $L_Q$ purely infinite simple and $1\le p\ne p'<\infty$  then it is often the case that no nonzero continuous homomorphism $\cO^p(Q)\to\cO^{p'}(Q')$ exists. For example this is the case when $L_{Q'}$ is simple. In particular, we have the following.

\begin{thm}\label{thm:opoqintro} 
Let $Q$ be a countable graph. If $L_Q$ is purely infinite simple then there is no nonzero continuous homomorphism $\cO^p(Q)\to\cO^{p'}(Q)$. 
\end{thm}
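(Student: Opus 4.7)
The plan is to derive Theorem \ref{thm:opoqintro} as an immediate specialization of the broader statement (Theorem \ref{thm:opoq}) announced in the introduction: if $L_Q$ is purely infinite simple and $L_{Q'}$ is simple, then no nonzero continuous homomorphism $\cO^p(Q)\to\cO^{p'}(Q')$ exists when $p\ne p'$. Setting $Q'=Q$ in that statement and using that purely infinite simple implies simple yields Theorem \ref{thm:opoqintro} at once. The real content is in Theorem \ref{thm:opoq}, whose proof I outline next.

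Assume for contradiction that $\phi:\cO^p(Q)\to\cO^{p'}(Q')$ is a nonzero continuous homomorphism. Since $L_Q$ is simple and dense in $\cO^p(Q)$, the restriction $\phi|_{L_Q}$ is injective. Simplicity of $L_{Q'}$ combined with Theorem \ref{thm:simpleintro} makes $\cO^{p'}(Q')$ simple as an $L^{p'}$-operator algebra, so $\phi$ itself is injective. The strategy is then a norm comparison on $L_Q$: exhibit a sequence $\{a_n\}\subset L_Q$ whose $\cO^p(Q)$-norm and whose $\phi$-image norm in $\cO^{p'}(Q')$ have different asymptotic growth in $n$, contradicting the continuity estimate $\|\phi(a_n)\|\le\|\phi\|\,\|a_n\|$.

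To build such $a_n$, I use pure infiniteness of $L_Q$. Fix any vertex $v\in Q^0$; the infiniteness of the idempotent $v$ produces, for each $n\ge 1$, partial isometries $s_1,\ldots,s_n\in vL_Qv$ with $s_i^*s_j=\delta_{ij}v$ and, by taking adjoints, mutually orthogonal ranges $s_is_i^*$. Setting $a_n=s_1+\cdots+s_n$, a direct computation on any spatial $L^p$-representation (the vectors $s_i\xi$ have pairwise disjoint $L^p$-supports and each $\|s_i\xi\|_p=\|v\xi\|_p$) gives $\|a_n\|=n^{1/p}$; by Theorem \ref{thm:indepintro} this equals $\|a_n\|_{\cO^p(Q)}$. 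The target is then a matching lower bound $\|\phi(a_n)\|_{\cO^{p'}(Q')}\ge c\,n^{1/p'}$ for some $c>0$, which together with $p\ne p'$ yields the desired contradiction.

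The main obstacle is establishing that lower bound: $\phi$ is not a priori spatial, so one cannot directly apply the spatial calculation at $p'$ to the images $\phi(s_i)$. To conclude, I would combine Theorem \ref{thm:indepintro} applied at $p'$ with the rigidity results for spatial partial isometries on $L^{p'}$-spaces in the vein of Phillips \cite{chris1} and Gardella--Thiel \cite{gardethiel2}, arguing that the Cuntz-type system $\{\phi(s_i)\}\subset\cO^{p'}(Q')$ can be realized, inside the universal spatial model of $\cO^{p'}(Q')$, as essentially spatial partial isometries for the purposes of the norm computation, so that the $L^{p'}$-scaling $n^{1/p'}$ of $\|\phi(a_n)\|_{\cO^{p'}(Q')}$ follows.
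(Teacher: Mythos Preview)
Your reduction to Theorem~\ref{thm:opoq} with $Q'=Q$ is exactly what the paper does, and your opening moves for Theorem~\ref{thm:opoq} match the paper's: use density and simplicity of $L_Q$ to see $\phi|_{L_Q}$ is injective, and exploit pure infinite simplicity to produce Cuntz-type elements. The paper packages the latter as Lemma~\ref{lem:linfty}, choosing $s_i=\beta^i\alpha$ for incomparable closed paths $\alpha,\beta$ at a vertex $v$, which has the advantage that each $s_i\in\cS(Q)$ so $\rho(s_i)$ is genuinely spatial in any spatial representation; your more general ``$s_i\in vL_Qv$ from infiniteness'' need not have that property, so already your computation $\|a_n\|_{\cO^p(Q)}=n^{1/p}$ requires this more careful choice.

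The real gap is the lower bound $\|\phi(a_n)\|_{\cO^{p'}(Q')}\ge c\,n^{1/p'}$. You acknowledge this is ``the main obstacle'' and then appeal to unspecified rigidity to argue that the $\phi(s_i)$ are ``essentially spatial'' in $\cO^{p'}(Q')$. They are not: $\phi$ is merely continuous, not contractive, so neither $\phi(s_i)$ nor $\phi(s_i^*)$ need have norm $1$, and $\phi(s_i^*)$ is not any kind of adjoint or reverse of $\phi(s_i)$ in $\caL(L^{p'})$. Banach--Lamperti and the Gardella--Thiel results apply to isometries or contractive idempotents, not to arbitrary bounded operators satisfying algebraic Cuntz relations, so there is no mechanism forcing $n^{1/p'}$ scaling. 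Note also that even if the bound held, $c\,n^{1/p'}\le \|\phi\|\,n^{1/p}$ contradicts continuity only when $p'<p$; you would need a separate argument (say with $a_n^*$) for $p'>p$.

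The paper avoids this entirely. Lemma~\ref{lem:linfty} gives an \emph{isometric} inclusion $\cO^p_\infty\hookrightarrow\cO^p(Q)$, so the composite $\cO^p_\infty\to\cO^{p'}(Q')$ is nonzero continuous. Composing further with a spatial representation of $L_{Q'}$ on $\ell^{p'}(\N)$ (available since $L_{Q'}$ is simple, via Proposition~\ref{prop:sigmarep} and Theorem~\ref{thm:indep}) yields a nonzero continuous map $\cO^p_\infty\to\caL(\ell^{p'}(\N))$. Phillips' \cite{chris1}*{Lemma~9.1} then gives an isomorphic copy of $\ell^p(\N)$ inside $\ell^{p'}(\N)$, which classical Banach space theory \cite{lindtzaf} rules out for $p\ne p'$. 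The contradiction is Banach-space-theoretic, not a norm computation on specific elements of $\cO^{p'}(Q')$.
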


A similar result for $L^p$-Cuntz algebras was obtained by N.C. Phillips in \cite{chris1}*{Theorem 9.2}. 

\smallskip

The rest of this paper is organized as follows. In Section \ref{sec:leav} we recall some definitions and basic facts on Leavitt path algebras and prove  some elementary technical lemmas. In Section \ref{sec:semigrp} we show (Lemma \ref{lem:tight}) that $L_Q$ is the universal algebra for tight algebraic representations of the inverse semigroup $\cS(Q)$ generated by $Q$. Spatial representations of the Leavitt path algebra 
$L_Q$ of a countable graph $Q$ are introduced in Section \ref{sec:spatrep}. We give examples of such representations and show in Proposition \ref{prop:sigmarep} that for every countable $Q$ and $1\le p<\infty$ there is an injective, nondegenerate spatial representation $L_Q\to\caL(\ell^p(\N))$. Spatial representations of matrix algebras 
$M_nL_Q$ for $1\le n\le\infty$ are considered in Section \ref{sec:matspat} and it is shown that they are the same as spatial representations of the Leavitt path algebra over the graph $M_nQ$ (Remark \ref{rem:matspatleav}) and that any such representation is equivalent to the matricial amplification $M_n\rho$ of a spatial representation $\rho$ of $L_Q$ (Lemma \ref{lem:matspat}). Section \ref{sec:crite} is concerned with a characterization of spatiality of representations in terms of norm estimates. We prove a spatiality criterion which we shall presently explain. The subalgebra $(L_Q)_{0,1}=\mspan\{v\in Q^0,ee^*, e\in Q^1\}\subset L_Q$ is a direct sum of --possibly infinite dimensional-- matrix algebras and is thus naturally equipped with a canonically equipped with an $L^p$-operator norm. The spatiality criterion, Theorem \ref{thm:contraspat} --which generalizes \cite{chris1}*{Theorem 7.7}-- says that if $p\in [1,\infty)$, $p\ne 2$, then a nondegenerate representation $\rho:L_Q\to\caL(L^p(X))$ is spatial if and only if its restriction to $(L_Q)_{0,1}$ is contractive and $\|\rho(x)\|\le 1$ for every $x\in  Q^1\coprod (Q^1)^*$ (cf. \cite{chris1}*{Theorem 7.7}). Along the way we also prove a spatiality criterion for nondegenerate $L^p$-representations of matricial algebras (Proposition \ref{prop:mat}) which generalizes \cite{chris1}*{Theorem 7.2}. Both spatiality criteria fail to be true if the nondegeneracy hypothesis is dropped (see Remark \ref{rem:ojo}). In Section \ref{sec:opq} we define $L^p$-operator algebras and introduce the $L^p$-operator algebra $\cO^p(Q)$. By definition, any spatial representation of $L_Q\to \caL(L^p(X))$ factors uniquely through a contractive representation $\cO^p(Q)\to\caL(L^p(X))$ ($1\le p<\infty$). Moreover we prove, using the spatiality criterion of Section \ref{sec:crite}, that for $p\ne 2$, any nondegenerate contractive representation $\cO^p(Q)\to \caL(L^p(X))$ induces a nondegenerate spatial representation $L_Q\to\caL(L^p(X))$ (Theorem \ref{thm:contraspat15}). Using a result of E. Gardella and H. Thiel from \cite{gardethiel2}, we show that if moreover $p\ne 1$, then the nondegeneracy hypothesis may be dropped (Theorem \ref{thm:contraspat2}). We also show, using the material of Section \ref{sec:semigrp}, that if $p\notin\{1,2\}$ then $\cO^p(Q)$ is the same as the $L^p$-algebra $F_\tight^p(\cS(Q))$ introduced by E. Gardella and M. Lupini in \cite{gardelupi}, which is universal for tight $L^p$-spatial representations of $\cS(Q)$. In the next section we show that adding heads and tails to a graph $Q$ to obtain a new graph $Q'$ without sources, sinks or infinite emitters results in an isometric inclusion $\cO^p(Q)\to\cO^p(Q')$ (Corollary \ref{coro:seminorms}). Section \ref{sec_O(Q)} is devoted to the proof of Theorem \ref{thm:indepintro} (Theorem \ref{thm:indep}). The technical result of the previous section is used here to reduce the proof to the case of graphs without sources, sinks or infinite emmitters. After this reduction, the strategy of proof is similar to that of \cite{chris1}*{Theorem 8.7}, although it requires several nontrivial technical adjustments. Simplicity Theorem \ref{thm:simpleintro} is proved in Section \ref{sec:simplicity}. In fact we prove in Theorem \ref{thm:simple} that the simplicity of $L_Q$ is equivalent not only to the conditions of Theorem \ref{thm:simpleintro}, but also to other more restrictive conditions, e.g. that every nondegenerate spatial nonzero representation $L_Q\to\caL(\ell^p(\N))$ be injective. The last section of this article is Section \ref{sec:opoq}, where we prove Theorem \ref{thm:opoq}, of which Theorem \ref{thm:opoqintro} is a particular case.

\begin{nota}
In this paper $\N=\Z_{\ge 1}$ and $\N_0=\Z_{\ge 0}$. All algebras, vector spaces, and tensor products are over $\C$. All identities pertaining measure spaces are to be interpreted up to sets of measure zero. For example we say that a family $\{X_n\}_{n\ge 1}$ of measurable sets in a measurable space $X=(X,\cB,\mu)$ is \emph{disjoint} if $X_n\cap X_{m}$ has measure zero for all $n\ne m$, and write $\coprod_nX_n$ for their union. In case the latter agrees with $X$ up to measure zero, we write $X=\coprod_nX_n$. This reflects the fact that under the above hypothesis $(X,\cB,\mu)$ is equivalent to set theoretic coproduct $\coprod_nX_n$ equipped with the $\sigma$-algebra generated by $\coprod_n\cB_n$ and the measure induced by the sequence of measures $\{\mu_{|X_n}\}$. 
We write $L^0(X)$ for the vector space of classes of measurable functions $X\to \C$. 
\end{nota}

\smallskip

\noindent{\em Acknowledgements.} This article has evolved from the PhD thesis of the second named author \cite{eugethesis}. We are indebted to Chris Phillips for discussions on his paper \cite{chris1}. Thanks also to our colleague Daniel Carando for several useful discussions and references on $L^p$-spaces. The first named author also wishes to thank Eusebio Gardella for an enlightening email exchange including several useful comments on a previous version of this paper.

\section{Graphs and Leavitt path algebras}\label{sec:leav}

An \emph{oriented graph} or \emph{quiver}  $Q=(Q^0,Q^1,r,s)$ consists of sets $Q^0$ and $Q^1$ of vertices and edges, and range and source functions $r,s:Q^1\rightarrow Q^0$ . We say that $Q$ is \emph{finite} or \emph{countable} if $Q^0$ and $Q^1$ are both finite or countable. A vertex $v\in Q^0$ is an \emph{infinite emitter} if $s^{-1}(v)$ is infinite, and is a \emph{sink} if $s^{-1}(v)=\emptyset$. A vertex is \emph{singular} if it is either a sink or an infinite emitter. We write $\sing(Q)=\sink(Q)\cup\infem(Q)\subset Q^0$ for the set of singular vertices and $\reg(Q)=Q^0\setminus\sing(Q)$. We call $Q$ \emph{singular} 
if $\sing(Q)\ne\emptyset$ and  \emph{nonsingular} (or \emph{regular}) otherwise.  We call $Q$ \emph{row-finite} if it has no infinite emitters. A vertex $v$ is a \emph{source} if $r^{-1}(v)=\emptyset$; we write $\sour(Q)\subset Q^0$ for the set of sources.

Since all our graphs will be oriented, we shall use the term graph to mean oriented graph.

A \emph{path} $\alpha$ is a (finite or infinite) sequence of edges $\alpha=e_1\ldots e_i\ldots$ such that $r(e_i)=s(e_{i+1})$  $(i\ge 1)$. For such $\alpha$, we write $s(\alpha)=s(e_1)$; if $\alpha$ is finite of \emph{length} $l$, we put $|\alpha|=l$ and $r(\alpha)=r(e_l)$. Vertices are considered as paths of length $0$. A finite path $\alpha$ is \emph{closed} if $s(\alpha)=r(\alpha)$. A closed path $\alpha=\alpha_1\dots\alpha_n$ is a \emph{cycle} if in addition $s(e_i)\ne s(e_j)$ if $i\ne j$. 
Let $\mathcal{P}=\cP(Q)$ be the set of finite paths, and let $\mathcal{P}_n$ be the set of paths of length $n$. Thus, 
\begin{equation}\label{pcoprod}
\mathcal{P}=\coprod_{n\in \mathbb{N}_0}\mathcal{P}_n.
\end{equation}

We consider the following preorder in $\cP$:

\begin{equation}\label{orderpath}
    		\alpha\le\beta \iff \exists\ \gamma \text{ such that } r(\beta)=s(\gamma) \text{ and } \alpha=\beta\gamma.
\end{equation}
Observe that \eqref{orderpath} also makes sense when $\alpha$ is an infinite path.

\begin{defi}\label{defi:def_leav}
Let $Q$ be a graph. The \emph{Leavitt path algebra} $L_Q$ is the quotient of the free $\C$-algebra on $Q^0\cup Q^1\cup(Q^1)^*$, modulo the following relations:

\begin{itemize}
 \item $vv'=\delta_{v,v'}v\ \forall\ v,v'\in Q^0,$

 \item $s(e)e=er(e)=e\ \forall\ e\in Q^1,$

 \item $r(e)e^*=e^*s(e)=e^*\ \forall\ e\in Q^1,$

 \item (CK1) $e^*e'=\delta_{e,e'}r(e)\ \forall\ e,e'\in Q^1,$

 \item (CK2) $v=\displaystyle{\sum_{\{e\in Q^1:s(e)=v\}}}ee^*$, if $v\in\reg(Q)$.
\end{itemize}
\end{defi}
The Leavitt path algebra is a $*$-algebra with involution determined by $v\mapsto v$, $e\mapsto e^*$. It has a $\Z$-grading where vertices have degree zero, edges have degree $1$, and  
$|e^*|=-1$ for $e\in Q^1$ (\cite{libro_Pere}*{Corollary 2.1.5}). We write 
\begin{equation}\label{grading}
(L_Q)_n=\mspan\{\alpha\beta^*: |\alpha|-|\beta|=n\}
\end{equation} 
for the $n$-th homogeneous component with respect to this grading. 

The elementary lemmas below shall be used later in the article. 

\begin{lemma}\label{lem:escr_sum}
Let $Q$ be a nonsingular graph and $a_1,\ldots,a_m\in L_Q$. Then there exist $n\in\mathbb{N}$, a finite set $F\subset \mathcal{P}$, and finitely supported functions $\lambda^i:F\times\mathcal{P}_n\rightarrow\mathbb{C}$, $(\alpha,\beta)\mapsto\lambda_{\alpha,\beta}^i$ ($i=1,\ldots m$, $\alpha\in F$, $\beta\in \mathcal{P}_n$), such that $$a_i=\sum_{\alpha\in F}\sum_{\beta\in \mathcal{P}_n}\lambda^i_{\alpha,\beta}\alpha\beta^*,\ \forall i=1,\ldots,m.$$
\end{lemma}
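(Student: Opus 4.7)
The plan is to exploit the CK2 relation at every vertex, which is available precisely because $Q$ is nonsingular (no sinks means every $v\in Q^0$ admits the expansion $v=\sum_{s(e)=v}ee^*$, and no infinite emitters means this sum is finite).

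First, I would use the standard normal form: every element of $L_Q$ is a finite linear combination of monomials $\alpha\beta^*$ with $\alpha,\beta\in\cP$ and $r(\alpha)=r(\beta)$ (for this to be nonzero). So write
\[
a_i=\sum_{(\alpha,\beta)\in S_i}\mu^i_{\alpha,\beta}\,\alpha\beta^*
\]
for a finite set $S_i\subset\cP\times\cP$. Set $n:=\max\{|\beta|:(\alpha,\beta)\in S_i,\ i=1,\dots,m\}$ (and, if needed, take $n\ge 1$ by enlarging it as below).

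The key step is a one-step expansion lemma: for any path $\beta$ with $v:=r(\beta)$, since $v\in\reg(Q)$ we have
\[
\alpha\beta^*=\alpha\,v\,\beta^*=\sum_{\substack{e\in Q^1\\ s(e)=v}}(\alpha e)(\beta e)^*,
\]
a \emph{finite} sum in which both new paths have length increased by one and still satisfy the range condition. Iterating this $n-|\beta|$ times, for any monomial $\alpha\beta^*$ with $|\beta|\le n$ I obtain
\[
\alpha\beta^*=\sum_{\substack{\gamma\in\cP_{n-|\beta|}\\ s(\gamma)=r(\beta)}}(\alpha\gamma)(\beta\gamma)^*,
\]
where the indexing set is finite (nonsingularity again) and each $\beta\gamma$ lies in $\cP_n$.

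Applying this expansion to every monomial appearing in every $a_i$, I let $F$ be the (finite) union of the resulting first components $\alpha\gamma$, define $\lambda^i_{\alpha',\beta'}$ as the coefficient picked up in the expansion (zero if $(\alpha',\beta')$ does not arise), and extend both arguments to $F\times\cP_n$ by zero. This yields the required presentation with a common right-length $n$.

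The only real obstacle is bookkeeping: ensuring the iterative expansion terminates with finite coefficient data and that the same $n$ works for all $a_i$ simultaneously. Both are handled by taking the maximum $\beta$-length over the finitely many monomials in the initial presentations of $a_1,\dots,a_m$ and invoking nonsingularity to keep each stage of the expansion finite.
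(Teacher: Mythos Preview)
Your proof is correct and follows essentially the same approach as the paper: write each $a_i$ in normal form, take $n$ to be the maximum $|\beta|$ appearing, and use (CK2) iteratively to lengthen every $\beta$ up to $n$, then collect the resulting $\alpha$'s into $F$. The paper's proof is terser---it simply invokes (CK2) without spelling out the one-step expansion $\alpha\beta^*=\sum_{s(e)=r(\beta)}(\alpha e)(\beta e)^*$---but the mechanism is identical.
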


\begin{proof}
For each $i=1,\ldots,m$, we may write $a_i=\sum_{j=1}^{n_i}\lambda^i_j\alpha_j^i{\beta_j^i}^*$ with paths $\beta_j^i$ of length $n:=\displaystyle{\max_{i,j}}\{|\beta_j^i|\}$, using relation (CK2) of Definition \ref{defi:def_leav}. Put $F_i:=\{\alpha_j^i:j=1,\ldots,n_i\}$, $G_i:=\{\beta_j^i:j=1,\ldots,n_i\}$ and $F:=\displaystyle{\bigcup_{i=1}^mF_i}$. Rewriting the sums for each $i$, we have $a_i=\sum_{\alpha\in F}\sum_{\beta\in\mathcal{P}_n}\lambda_{\alpha,\beta}^i\alpha\beta^*$ with $\lambda_{\alpha,\beta}^i=0$ if $\alpha\notin F_i$ or $\beta\notin G_i$.
\end{proof}

\begin{lemma}\label{lem_exist_repres}
 Let $Q$ be a graph, $B$ a $\C$-algebra, and $\rho: L_Q\rightarrow B$ a homomorphism. Let $u:=\{u_v\}_{v\in Q^0}\subset B$ such that $u_v$ is invertible in $\rho(v)B\rho(v)$  $(v\in Q^0)$. Then there is a unique homomorphism $\rho_u: L_Q\rightarrow B$ such that

 $$\rho_u(e)=u_{s(e)}\rho(e),\ \ \rho_u(e^*)=\rho(e^*)u_{s(e)}^{-1}\ \ \ \textrm{and} \ \ \rho_u(v)=\rho(v)\ \ (\forall e\in Q^1,\ v\in Q^0).$$
\end{lemma}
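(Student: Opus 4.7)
The plan is to define $\rho_u$ on the generating set $Q^0\cup Q^1\cup (Q^1)^*$ by the prescribed formulas, verify that the images satisfy the five defining relations of Definition \ref{defi:def_leav}, and invoke the universal property of $L_Q$ (as a quotient of the free $\C$-algebra on $Q^0\cup Q^1\cup (Q^1)^*$) to extend uniquely. Uniqueness follows immediately since the formulas fix $\rho_u$ on a generating set. The only substantive content is the relation-checking, which is routine provided one systematically exploits the hypothesis that $u_v$ lies in the corner $\rho(v)B\rho(v)$ and hence satisfies $\rho(v)u_v=u_v=u_v\rho(v)$ and $\rho(v)u_v^{-1}=u_v^{-1}=u_v^{-1}\rho(v)$, with $u_vu_v^{-1}=u_v^{-1}u_v=\rho(v)$ in that corner.

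For the orthogonality $vv'=\delta_{v,v'}v$ the verification is trivial since $\rho_u(v)=\rho(v)$. For $s(e)e=er(e)=e$, I would compute $\rho_u(s(e))\rho_u(e)=\rho(s(e))u_{s(e)}\rho(e)=u_{s(e)}\rho(e)=\rho_u(e)$ using the corner property, and similarly $\rho_u(e)\rho_u(r(e))=u_{s(e)}\rho(er(e))=u_{s(e)}\rho(e)$. The relations involving $e^*$ are analogous, using that $u_{s(e)}^{-1}$ lies in the same corner.

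The key step is (CK1): for $e,e'\in Q^1$ one has
\[
\rho_u(e^*)\rho_u(e')=\rho(e^*)u_{s(e)}^{-1}u_{s(e')}\rho(e').
\]
If $s(e)\ne s(e')$ then $u_{s(e)}^{-1}u_{s(e')}=u_{s(e)}^{-1}\rho(s(e))\rho(s(e'))u_{s(e')}=0$, while if $s(e)=s(e')$ then $u_{s(e)}^{-1}u_{s(e')}=\rho(s(e))$, reducing the product to $\rho(e^*e')=\delta_{e,e'}\rho(r(e))=\delta_{e,e'}\rho_u(r(e))$. For (CK2) with $v\in\reg(Q)$, I would compute
\[
\sum_{s(e)=v}\rho_u(e)\rho_u(e^*)=\sum_{s(e)=v}u_v\rho(ee^*)u_v^{-1}=u_v\rho(v)u_v^{-1}=u_vu_v^{-1}=\rho(v)=\rho_u(v).
\]
So all five relations hold and the homomorphism exists.

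The main (mild) obstacle is keeping track of the fact that $u_v^{-1}$ denotes the inverse inside the corner $\rho(v)B\rho(v)$ rather than in $B$; writing $u_vu_v^{-1}=\rho(v)$ (not $1_B$) is essential everywhere, particularly in (CK2) where collapsing the sum relies exactly on this. No deeper technical difficulty arises.
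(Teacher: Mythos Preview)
Your proof is correct and takes essentially the same approach as the paper, which simply states that one checks that the prescribed images satisfy the relations of Definition~\ref{defi:def_leav}. You have supplied the routine verifications that the paper omits, and in particular your handling of the corner inverse $u_v^{-1}$ (with $u_vu_v^{-1}=\rho(v)$ rather than $1_B$) is exactly the point one must be careful about.
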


\begin{proof} One checks that the elements $\rho(x)$, $x\in Q^0\cup Q^1\cup (Q^1)^*$ satisfy the relations of Definition \ref{defi:def_leav}.
\end{proof}

\section{Leavitt path algebras and semigroups}\label{sec:semigrp}

Let $Q$ be a graph and $\cP=\cP(Q)$ the set of finite paths. Write 
\begin{equation}\label{semiQ}
\cS=\cS(Q)=\{0\}\cup\{\alpha\beta^*:\alpha,\beta\in \cP\}\subset L_Q.
\end{equation}
$\cS$ is the inverse semigroup associated with $Q$. The \emph{Cohn} algebra of $Q$ is the semigroup algebra $C_Q=\C[\cS]$ of $\cS$; its elements are the finite linear combinations of the elements of $\cS$ with multiplication induced by that of $\cS$. Observe that $L_Q$ is the quotient of $C_Q$ modulo the relation $CK2$. Consider
\[
\cS\supset\cE=\{0\}\cup\{\alpha\alpha^*:\alpha\in \cP\}
\]
the sub-semigroup of idempotent elements. The set $\cE$ is partially ordered by $p\le q\iff pq=p$ and is a semilattice for this partial order. Observe that for the order of paths defined in \eqref{orderpath}, the bijection $\cP\to \cE\setminus\{0\}$, $\alpha\mapsto \alpha\alpha^*$ is a poset isomorphism. Note also that $p,q\in\cE$ are incomparable if and only if $pq=0$. Let $p\in \cE$ and $Z\subset \{q\in\cE:q\le p\}$. 
We call $Z$ a \emph{cover} of $p$ if for every $q\le p$ there exists $z\in Z$ such that $zq\ne 0$. A \emph{representation} of $\cS$ in a vector space $\V$ is a semigroup homomorphism $\rho:\cS\to(\End(\V),\circ)$, where the latter is the set of linear endomorphisms considered as a semigroup under composition. The image of $\cE$ under a representation $\rho$ generates a boolean algebra $\cB_\rho$ with operations $p\wedge q=pq$, $p\vee q=p+q-pq$. By \cite{exel}*{Proposition 11.8}, the boolean representation $\rho:\cE\to\cB_\rho$ is \emph{tight} in the sense of \cite{exel}*{Definition 11.6} if and only if for every $p\in\cE$ and every finite cover $Z$ of $p$, we have
\begin{equation}\label{condi:tight}
\bigvee_{z\in Z}\rho(z)=\rho(p).
\end{equation}
Following \cite{exel}*{Definition 13.1}, we call the representation $\rho$ of $\cS$ tight if its restriction to $\cE$ is tight.

Although the following lemma is well-known to experts, we have not been able to find it explicitly stated in the literature, so we include it here with proof. The particular case of Lemma \ref{lem:tight} when $Q$ has a single vertex is \cite{gardethiel2}*{Lemma 7.5}. See also \cite{stein2}*{Corollary 5.3}.

\begin{lem}\label{lem:tight}
Let $\rho:\cS(Q)\to \End(\V)$ be a representation. Then $\rho$ is tight if and only if it extends to an algebra homomorphism $L_Q\to \End(\V)$. 
\end{lem}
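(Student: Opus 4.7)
The plan is to reduce the lemma to a single algebraic condition and then verify that this condition is equivalent to tightness. The Cohn algebra $C_Q=\C[\cS(Q)]$ is, by construction, the universal $\C$-algebra receiving a semigroup homomorphism from $\cS(Q)$, so $\rho$ always extends uniquely to a homomorphism $\tilde\rho:C_Q\to\End(\V)$. Since $L_Q$ is the quotient of $C_Q$ by the two-sided ideal generated by the (CK2) relations, $\tilde\rho$ descends to $L_Q$ if and only if $\rho(v)=\sum_{s(e)=v}\rho(e)\rho(e^*)$ for every $v\in\reg(Q)$, so my task is to show that this identity (for every regular $v$) is equivalent to the tightness of $\rho$.

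For the easier direction (tight $\Rightarrow$ extends), I would verify that for each $v\in\reg(Q)$ the finite set $Z_v:=\{ee^*:s(e)=v\}\subset\cE$ is a cover of $v$: any $0\ne q=\beta\beta^*\le v$ is compatible with the element of $Z_v$ determined by the first letter of $\beta$, or with any $ee^*\in Z_v$ when $\beta=v$. Relation (CK1) makes the $ee^*$ pairwise orthogonal in $\cE$, so $\bigvee_e\rho(ee^*)=\sum_e\rho(ee^*)$, and tightness applied to $Z_v$ yields exactly (CK2).

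For the harder direction (extends $\Rightarrow$ tight), I would induct on $d(Z):=\max\{|\gamma_z|\}$, where each $z\in Z$ is written $z=\alpha\gamma_z(\alpha\gamma_z)^*$ (with $p=\alpha\alpha^*$). The base case $d(Z)=0$ means $p\in Z$ and is trivial. For the induction step I first note that $r(\alpha)$ must be regular: a sink admits no proper extensions of $\alpha$, forcing $Z=\{p\}$; an infinite emitter provides an edge $e$ out of $r(\alpha)$ not appearing as the first letter of any $\gamma_z$, so $\alpha e(\alpha e)^*\le p$ would be uncovered by $Z$. With $r(\alpha)$ regular I partition $Z=\coprod_e W_e$ by the first letter of $\gamma_z$, where $W_e:=\{z\in Z:z\le\alpha e(\alpha e)^*\}$, and a short check shows that each $W_e$ is a finite cover of $\alpha e(\alpha e)^*$ with $d(W_e)<d(Z)$. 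The inductive hypothesis combined with (CK2) at $r(\alpha)$ and the pairwise orthogonality of the $\alpha e(\alpha e)^*$'s then gives
\[
\bigvee_{z\in Z}\rho(z)=\bigvee_e\bigvee_{z\in W_e}\rho(z)=\bigvee_e\rho(\alpha e(\alpha e)^*)=\sum_e\rho(\alpha e(\alpha e)^*)=\rho(p).
\]
The main obstacle is the combinatorial setup in this induction: ruling out strict covers when $r(\alpha)$ is singular, and verifying that the restricted sets $W_e$ remain covers of the children $\alpha e(\alpha e)^*$. Once these points are settled, (CK2) and the semilattice structure of $\cE$ drive the rest of the argument automatically.
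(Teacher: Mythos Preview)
Your argument is correct in outline and rests on the same core idea as the paper's---induction on the maximal extension length, driven by (CK2)---but the organization differs. The paper first passes to the maximal (hence pairwise incomparable) elements of $Z$, so that the join becomes a sum; it then conjugates by $\alpha^*,\alpha$ to reduce to the case $p=v\in Q^0$, and proves the resulting identity $\sum_{z\in Z}z=v$ \emph{inside} $L_Q$ by collapsing the \emph{last} edge of the longest paths. You instead keep the base point $p=\alpha\alpha^*$ fixed, work with Boolean joins throughout, and peel off the \emph{first} edge after $\alpha$. Both routes are natural; the paper's buys a clean algebraic identity independent of $\rho$, while yours avoids the conjugation step at the cost of carrying $\alpha$ through the induction.

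One small patch: in your induction step you implicitly assume $p\notin Z$ (otherwise ``the first letter of $\gamma_z$'' is undefined for $z=p$, and your infinite-emitter argument fails since $p$ itself meets $\alpha e(\alpha e)^*$). This is harmless---if $p\in Z$ the join is trivially $\rho(p)$---but it should be said. Equivalently, first replace $Z$ by its set of maximal elements; if $p$ survives then $Z=\{p\}$, and otherwise every $\gamma_z$ has length at least $1$ and your partition by first letter goes through.
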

 \begin{proof} If $v\in\reg(Q)$, then $Z=\{ee^*:e\in Q^1, s(e)=v\}$ is a finite cover of $v$ and the supremum in \eqref{condi:tight} equals $\sum_{e\in Z}\rho(ee^*)$. It follows that if $\rho$ is tight then it extends to an algebra homomorphism $L_Q\to\End(\V)$. Assume conversely that $\rho$ extends to $L_Q$. We have to prove that \eqref{condi:tight} holds. Since the supremum in \eqref{condi:tight} depends only on the maximal elements of $Z$ and any two of these are incomparable we may assume that no two distinct elements of $Z$ are comparable. Hence
\[
\bigvee_{z\in Z}\rho(z)=\sum_{z\in Z}\rho(z).
\] 
If $\alpha\in\cP$ and $r(\alpha)=v$, then $W=\alpha^*Z\alpha$ is a cover of $v$ and $\sum_{z\in\Z}=\alpha\sum_{w\in W}w\alpha^*$. Hence we may further assume that $\alpha=v$. We must then prove that the following identity holds in $L_Q$
\[
\sum_{z\in Z}z=v
\]
for each finite cover $Z$ of $v$ in which no two distinct elements are comparable. We do this by induction on $n=m(Z)=\max\{|\alpha|:\alpha\alpha^*\in Z\}$. 
For $n=0$ this is trivial. Assume $n\ge 1$ and let $A=\{\alpha\in\cP_n:\alpha\alpha^*\in Z\}$. Each $\alpha\in A$ writes uniquely as $\tilde{\alpha}e_\alpha$ where $|\tilde{\alpha}|=n-1$ and $e_\alpha\in Q^1$. For $w\in B:=\{s(e_\alpha):\alpha\in A\}$, put $C_w=\{e_\alpha:s(e_\alpha)=w\}$; because $Z$ is a cover, $C_w=s^{-1}(w)$. Hence 
\begin{align*}
\sum_{\alpha\in A}\alpha\alpha^*=&\sum_{\beta\in \tilde{A}}\sum_{\tilde{\alpha}=\beta}\alpha\alpha^*\\
                                =&\sum_{\beta\in \tilde{A}}\sum_{s(e)=r(\beta)}\beta ee^*\beta^*\\
																=&\sum_{\beta\in\tilde{A}}\beta\beta^*.
\end{align*}
Let $Z'=(Z\setminus A)\cup \tilde{A}$; then $m(Z')=n-1$, any two distinct elements of $Z'$ are incomparable, and by the calculation above, $\sum_{z'\in Z'}z'=\sum_{z\in Z}z$. This concludes the proof. 
\end{proof}

\section{Spatial representations of \(L_Q\)}\label{sec:spatrep}

Let $E$ be a Banach space. We write $\caL(E)$ for the Banach algebra of bounded linear maps $E\rightarrow E$.
A \emph{representation} of $L_Q$ on $E$ is an algebra homomorphism $\rho:L_Q\to \caL(E)$. We say that $\rho$ is \emph{nondegenerate} if $\rho(L_Q)E\subset E$ is dense. In this paper we shall be mostly concerned
with \emph{$L^p$-representations}, that is, with representations on Banach spaces of the form $L^p(X)$ $(1\le p<\infty)$ where $X=(X,\cB,\mu)$ is a $\sigma$-finite measure space. If $A\in\cB$, we write $P(A)$ for the set of subsets of $A$ and consider 
$A$ as a measure space with $\sigma$-algebra $\cB_A:=\cB\cap P(A)$ and measure $\mu_{|\cB_A}$; thus
\[
A=(A,\cB_A,\mu_{|\cB_A}).
\]
We write $\cN(\mu)=\{A\in \cB:\mu(A)=0\}$, $\cB_\mu=\cB/\cN(\mu)$. 

In what follows, we need to borrow several definitions from \cite{chris1}, pertaining to (partial) isometries between $L^p$-spaces. 

Let $X=(X,\cB,\mu)$ and $(Y,\cC,\nu)$ be $\sigma$-finite measure spaces. A \emph{measurable set transformation} from $X$ to $Y$ is homomorphism of $\sigma$-algebras $S:\cB_\mu\to\cC_\nu$. If $S$ is bijective, then $S_*(\mu)=\mu S^{-1}$ is a $\sigma$ finite measure on $\cC$, absolutely continuous with respect to $\nu$. By \cite{chris1}*{Proposition 5.6}, there is also a map $S_*:L^0(X)\to L^0(Y)$ such that $S_*(\chi_E)=\chi_{S(E)}$ ($E\in\cB_\mu$). Let $1\le p<\infty$; to a bijective measurable set transformation $S$ from $X$ to $Y$ and a measurable function $h:Y\to \C$ such that $|h(x)|=1$ for almost every $x\in B$ one associates an isometric isomorphism $u:L^p(X)\to L^p(Y)$ as follows:
\begin{equation}\label{spatsyst}
u(\xi)(y)=h(y)([\frac{dS_*(\mu)}{d\nu(y)}])^{1/p}S_*(\xi)(y) \quad (\xi\in L^p(X)).
\end{equation}
An isometric isomorphism $u:L^p(X)\to L^p(Y)$ is called \emph{spatial} if there exist $S$ and $h$ such that $u$ is of the form \eqref{spatsyst}. If $p\ne 2$, then every isometric isomorphism in $\caL(L^p(X),L^p(Y))$ is spatial, by the Banach-Lamperti theorem (\cite{chris1}*{Theorem 6.9 and Lemma 6.15}). 
A partial isometry $s:L^p(X)\to L^p(X)$ is 
\emph{spatial} if there are $A,B\in \cB_\mu$, called respectively the \emph{domain} and the \emph{range support} of $s$, such that for the projection $\pi_A:L^p(X)\to L^p(A)$ and the inclusion $\iota_B:L^p(B)\to L^p(X)$  we have a factorization 
\begin{equation}\label{spatiso}
s=\iota_B u\pi_A
\end{equation}
 where $u:L^p(A)\to L^p(B)$ is a spatial isometric isomorphism. If $S$ and $h$ are as in \eqref{spatsyst} we call $s$ the spatial partial isometry associated with the \emph{spatial system} $(S,A,B,h)$; $S$ and $h$ are the \emph{spatial realization} and the \emph{phase factor} of the spatial system. The \emph{reverse} of the spatial partial isometry \eqref{spatiso} is the spatial partial isometry $t=\iota_A u^{-1}\pi_B$. If $p=2$, then the reverse of a spatial partial isometry $s$ is just its adjoint $t=s^*$. 

\begin{exa}\label{exa:idempotent} 
Let $X=(X,\cB,\mu)$ be a $\sigma$-finite measure space. Let $E\in\cB$  and let $\chi_E$ be the characteristic function. Then the canonical projection $\pi_E:L^p(X)\to L^p(E)\subset L^p(X), \pi_E(\xi)=\chi_E\xi$ is a spatial partial isometry with spatial system $(Id_{\cB_E},E,E,1)$. Every idempotent spatial partial isometry is of this form, by \cite{chris1}*{Lemma 6.18}. 
\end{exa}

\begin{rem}\label{rem:idempotent}
Spatial partial isometries in general and spatial idempotents in particular have norm $1$. However the converse does not hold. For example, 
\[
\left(\begin{matrix}1/2&1/2\\ 1/2& 1/2\end{matrix}\right)\in M_2=\cB(\ell^p(\{1,2\})
\]
is a norm one idempotent that is not spatial in our sense (which is that of \cite{chris1}) for any $p\ge 1$ (\cite{chris1}*{Example 7.3}). However it is self-adjoint and therefore $2$-spatial in the sense of \cite{gardelupi}*{Definition 4.6}.
\end{rem}

A representation $\rho:L_Q\to \caL(L^p(X))$ is \emph{spatial} if for each $v\in Q^0$, $\rho(v)$ is a spatial idempotent and for each $e\in Q^1$, $\rho(e)$ is a spatial partial isometry with reverse $\rho(e^*)$. 
If $\rho$ is spatial then $\rho(x)$ is spatial for every $x\in\cS(Q)$, whence by Lemma \ref{lem:tight} a spatial representation of $L_Q$ is the same as a tight \emph{spatial representation of $\cS(Q)$}, that is, a tight representation of $\cS(Q)$ which takes values in the inverse semigroup $\cS(L^p(X))$ of spatial partial isometries. 

\begin{rem}\label{rem:spat*}
As we explained above, the reverse of a spatial isometry $s\in L^2(X)$ is just its adjoint. Hence any spatial representation $L_Q\to\caL(L^2(X))$ is a $*$-representation. The converse does not hold. For example $\C$ is the Leavitt path algebra of the graph consisting of a single vertex and no edges, and the representation $\rho:\C\to M_2=\caL(\ell^2(\N))$ that sends $1$ to the self-adjoint idempotent of Remark \ref{rem:idempotent} is a $*$-representation that is not spatial in our sense.
\end{rem}

\begin{rem}\label{rem:comparables}
If $\rho$ is spatial and $\alpha,\beta\in\cP(Q)$ are paths with $r(\alpha)=r(\beta)$, then $\rho(\alpha\beta^*)$ is a spatial partial isometry. In particular, $\rho(\alpha\alpha^*)$ is an idempotent spatial partial isometry, and thus by Example \ref{exa:idempotent}, there is  
$X_\alpha\in \cB$ such that $\rho(\alpha\alpha^*)$ is the canonical projection $\pi_{X_\alpha}:L^p(X)\to L^p(X_\alpha)\subset L^p(X)$. If $S_\alpha$ is the measurable set transformation of $\rho(\alpha)$ then $X_\alpha=S_\alpha(X_{r(\alpha)})$, so the spatial system of $\rho(\alpha)$ is of the form
$$(S_\alpha, X_{r(\alpha)},X_\alpha,g_\alpha)$$ for some $g_\alpha:X_\alpha\to \C$ such that $|g(x)|=1$ for almost all $x\in X_\alpha$. 
If $\alpha\geq\beta$, say $\beta=\alpha\gamma$, then 
$X_{\beta}\subset X_{\alpha}$ because $X_{\beta}=S_{\alpha}(X_{\gamma})\subset S_{\alpha}(X_{r(\alpha)})= X_{\alpha}$.
On the other hand if $\alpha$ and $\beta$ are not comparable then $X_{\alpha}$ and $X_{\beta}$ are disjoint.
In particular, for each $v\in Q^0$ the family $\{X_e:s(e)=v\}\subset\cB\cap P(X_v)$ is disjoint, and if $v$ is regular its union is the whole $X_v$:
\begin{equation}\label{decompo:regv}
X_v=\coprod_{e\in s^{-1}(v)}X_e  \qquad (v\in\reg(Q)). 
\end{equation}
 It follows from \eqref{decompo:regv} that if $Q$ is nonsingular then for each $l\ge 0$ we have
\begin{equation}\label{decompo:regQ}
X_v=\coprod_{\alpha\in v\cP_l(Q)}X_\alpha\qquad (Q^0=\reg(Q)).
\end{equation}
Conversely, if we are given disjoint families $\{X_v:v\in Q^0\}\subset\cB$ and $\{X_e:e\in Q^1, s(e)=v\}\subset \cB\cap P(X_v)$ $(v\in Q^0)$ satisfying \eqref{decompo:regv} and a family $\{s_e:e\in Q^1\}$ of spatial partial isometries in $\caL(L^p(X))$ with range and source projections $\pi_{X_e}$ and $\pi_{X_{v}}$, then we have a unique algebra homomorphism $\rho:L_Q\to\caL(L^p(X))$ mapping $\rho(v)=\pi_{X_v}$, $\rho(e)=s_e$, and sending $e^*$ to the reverse of $s_e$. 
\end{rem}

\begin{lem}\label{lem:nondeg} Let $X$ be a $\sigma$-finite measure space. A spatial representation $\rho:L_Q\to \caL(L^p(X))$ is nondegenerate if and only if  
\begin{equation}\label{decompo:nondeg}
X=\coprod_{v\in Q^0}X_v.
\end{equation}
\end{lem}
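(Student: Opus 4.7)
\medskip

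My plan is the following. By Remark \ref{rem:comparables} and the explicit form of idempotent spatial partial isometries (Example \ref{exa:idempotent}), we have $\rho(v)=\pi_{X_v}$ for each $v\in Q^0$, and the relation $vv'=\delta_{v,v'}v$ in $L_Q$ forces $\pi_{X_v}\pi_{X_{v'}}=\pi_{X_v\cap X_{v'}}=0$ for $v\ne v'$, so $\{X_v\}_{v\in Q^0}$ is a disjoint family in $\cB$. Consequently $L^p(\coprod_{v\in Q^0}X_v)$ may be identified with the $\ell^p$-sum $\bigoplus_v^{\ell^p}L^p(X_v)\subseteq L^p(X)$, inside which the algebraic direct sum $\bigoplus_v L^p(X_v)$ is dense.

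For the $(\Leftarrow)$ direction, assume $X=\coprod_{v\in Q^0}X_v$. Then for each $v$, $L^p(X_v)=\rho(v)L^p(X)\subseteq \rho(L_Q)L^p(X)$, so $\rho(L_Q)L^p(X)$ contains the dense subspace $\bigoplus_v L^p(X_v)$ of $L^p(X)$, and $\rho$ is nondegenerate.

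For the $(\Rightarrow)$ direction, the key observation is that every $a\in L_Q$ satisfies $a=\sum_{v\in F}va$ for some finite set $F\subset Q^0$: indeed, $a$ is a finite linear combination of monomials $\alpha\beta^*$ with $r(\alpha)=r(\beta)$, and $\alpha\beta^*=s(\alpha)\alpha\beta^*$, so we may take $F$ to be the (finite) set of source vertices appearing. Applying $\rho$, we get $\rho(a)L^p(X)\subseteq \sum_{v\in F}\rho(v)L^p(X)=\sum_{v\in F}L^p(X_v)\subseteq L^p(\bigcup_v X_v)$. Hence $\rho(L_Q)L^p(X)\subseteq L^p(\bigcup_v X_v)$, and the latter is a closed subspace of $L^p(X)$. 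Nondegeneracy of $\rho$ then forces $L^p(\bigcup_v X_v)=L^p(X)$, equivalently $\bigcup_v X_v=X$ up to measure zero; combined with the disjointness of the $X_v$, this gives \eqref{decompo:nondeg}.

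There is no serious obstacle here; the only point that requires some care is the identification of the closure of the algebraic sum $\sum_v L^p(X_v)$ with $L^p(\coprod_v X_v)$, which uses the disjointness of the family $\{X_v\}$ together with the $\sigma$-finiteness of $X$ so that only countably many $X_v$ can have positive measure.
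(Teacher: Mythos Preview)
Your argument is correct and follows the same route as the paper's proof, which simply records the identity
\[
\rho(L_Q)L^p(X)=\sum_{v\in Q^0}\rho(v)L^p(X)=\bigoplus_{v\in Q^0}L^p(X_v)
\]
and declares the result immediate; you have merely unpacked both inclusions behind this identity (using $a=\sum_{v\in F}va$ for one and $L^p(X_v)=\rho(v)L^p(X)$ for the other) and the passage from density to equality of measurable sets.
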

\begin{proof} Immediate from the fact that 
\[
\rho(L_Q)L^p(X)=\sum_{v\in Q^0}\rho(v)L^p(X)=\bigoplus_{v\in Q^0}L^p(X_v).
\]
\end{proof}

It follows from \eqref{decompo:regQ} and Lemma \ref{lem:nondeg} that if $Q$ is nonsingular and $\rho$ is nondegenerate, then for each $l\ge 0$ we have
\begin{equation}\label{decompo:nondegreg}
X=\coprod_{\alpha\in\cP_l(Q)}X_\alpha.
\end{equation}

\begin{lem}\label{lem:nondegenerate}
Let $Q$ be a graph, $1\le p<\infty$, $X=(X,\cB,\mu)$ a $\sigma$-finite measure space, and $\rho:L_Q\to \caL(L^p(X))$ a spatial representation. Then there are $X'\in\cB$ and a nondegenerate spatial representation $\rho':L_Q\to \caL(L^p(X'))$ such that $\rho$ factors as $\rho'$ followed
by the inclusion $\caL(L^p(X'))\subset\caL(L^p(X))$. 
\end{lem}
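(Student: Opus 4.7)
The plan is to take $X'$ to be (a representative of the measurable set class of) the disjoint union $\coprod_{v\in Q^0}X_v$, where $\{X_v\}_{v\in Q^0}$ is the family of sets from Remark \ref{rem:comparables} attached to the spatial idempotents $\rho(v)$. Since $Q$ is countable and $vv'=\delta_{v,v'}v$ in $L_Q$, we have $\pi_{X_v}\pi_{X_{v'}}=\rho(vv')=0$ for $v\neq v'$, so the $X_v$ are pairwise disjoint (up to measure zero) in $\cB$, and their countable union $X'$ is measurable and $\sigma$-finite. Thus $L^p(X')\subset L^p(X)$ is the closed subspace $\bigoplus_{v\in Q^0}L^p(X_v)$.

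Next I would verify that every $\rho(a)$, for $a\in L_Q$, preserves the decomposition $L^p(X)=L^p(X')\oplus L^p(X\setminus X')$ and vanishes on the second summand. It suffices to check this on generators $\alpha\beta^*$. For $\xi\in L^p(X\setminus X')$, using $\beta^*=s(\beta)\beta^*$ one obtains $\rho(\beta^*)\xi=\rho(\beta^*)\pi_{X_\beta}\xi=0$, since $X_\beta\subset X_{s(\beta)}\subset X'$; hence $\rho(\alpha\beta^*)\xi=0$. On $L^p(X')$, $\rho(\alpha\beta^*)$ maps into $L^p(X_{s(\alpha)})\subset L^p(X')$ by the same token applied to $\rho(\alpha)=\rho(s(\alpha))\rho(\alpha)$. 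Hence $\rho$ factors as $\rho=\iota_*\circ\rho'$, where $\iota:L^p(X')\hookrightarrow L^p(X)$ is the canonical inclusion and $\rho':L_Q\to\caL(L^p(X'))$ is defined by $\rho'(a)=\rho(a)_{|L^p(X')}$.

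I would then check that $\rho'$ is spatial and nondegenerate. Spatiality is inherited: $\rho'(v)$ is the projection $\pi_{X_v}$ viewed inside $L^p(X')$ (still a spatial idempotent in the sense of Example \ref{exa:idempotent}), and each $\rho'(e)$ is the spatial partial isometry with the same spatial system $(S_e,X_{r(e)},X_e,g_e)$ as $\rho(e)$, its reverse being the corresponding restriction of $\rho(e^*)$. Nondegeneracy is clear from
\[
\rho'(L_Q)L^p(X')\supset\sum_{v\in Q^0}\rho'(v)L^p(X_v)=\bigoplus_{v\in Q^0}L^p(X_v)=L^p(X'),
\]
which in fact already equals $L^p(X')$.

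There is no serious obstacle here; the only point that deserves care is disjointness and measurability of the union defining $X'$, which both follow from the orthogonality relations $\rho(v)\rho(v')=0$ for $v\neq v'$ together with the countability of $Q^0$. The inclusion $X_\beta\subset X_{s(\beta)}$ used above is a particular case of the comparability observation in Remark \ref{rem:comparables} applied to $\beta\beta^*\le s(\beta)\,s(\beta)^*=s(\beta)$.
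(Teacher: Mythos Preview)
Your proof is correct and takes exactly the same approach as the paper, which simply writes ``Put $X'=\coprod_{v\in Q^0}X_v$'' and leaves the rest implicit; you have merely filled in the details that the paper omits. One small slip: the identity you invoke, $\beta^*=s(\beta)\beta^*$, is false in general (you want $\beta^*=\beta^*\beta\beta^*$, or simply that the domain support of $\rho(\beta^*)$ is $X_\beta\subset X'$), but your conclusion $\rho(\beta^*)\xi=\rho(\beta^*)\pi_{X_\beta}\xi=0$ is correct regardless.
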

\begin{proof} Put $X'=\coprod_{v\in Q^0}X_v$.
\end{proof}

\begin{exa}\label{ex:tight} Let $Q$ be a graph, and let 
\begin{equation}\label{fX}
\fX=\fX_Q=\{\alpha: \text{ infinite path in } Q\}\cup\{\alpha\in\cP:r(\alpha)\in\sing(Q)\}.
\end{equation}
For $\alpha\in\cP$, let 
\[
\fX\supset Z_{\alpha}=\{x\in \fX: \alpha\ge x\}=\alpha\fX.
\]
The sets $Z_{\alpha}$ are the basis of a topology which makes it a locally compact Hausdorff space; modulo our different conventions for ranges and sources, this is the space considered in \cite{orlosim}*{page 3}. The inverse semigroup $\cS=\cS(Q)$ acts on $\fX$ by partial homeomorphisms; an element $u=\alpha\beta^*\in \cS$ acts on $\fX$ with domain $Z_\beta$ and range $Z_\alpha$ via 
\begin{equation}\label{actsemi}
\alpha\beta^*(\beta x)=\alpha x.
\end{equation}  
Let $\cB$ the the $\sigma$-algebra of all Borel subsets of $\fX$.  The semigroup $\cS$ of \eqref{semiQ} acts on $\fX$ via \eqref{actsemi}. If $\alpha,\beta\in\cP$ with $r(\alpha)=r(\beta)$, then
\begin{equation}\label{act:fX}
S_{\alpha\beta^*}:\cB_{|Z_\beta}\to \cB_{|Z_\alpha},\quad A\mapsto \alpha\beta^*(A)
\end{equation}
is a bijective homomorphism of $\sigma$-algebras. Let $\mu$ be a measure on $\cB$; $\mu$ is \emph{quasi-invariant} under $\alpha\beta^*$ if $\mu_{|Z_\beta}$ and $\mu_{|Z_\alpha}\circ\beta\alpha^*$ are equivalent measures (that is, if they are absolutely continuous with respect to each other); $\mu$ is quasi-invariant under $\cS$ it is quasi-invariant under any element of $\cS$. One can show that $\fX$ always has a $\sigma$-finite measure that is quasi-invariant under $\cS$. For example, in case $\fX$ is countable we can take $\mu$ to be the counting measure. Assume that $\mu$ is a $\sigma$-finite measure on the Borel subsets of $\fX$, quasi-invariant under $\cS$, and let $s_{\alpha\beta^*}$ be the spatial isometry of 
\eqref{spatsyst} with spatial realization $S=S_{\alpha\beta^*}$ and constant phase factor $h=1$. Then 
\[
\cS\to \caL(L^p(\fX,\mu)),\quad \alpha\beta^*\mapsto s_{\alpha\beta^*}
\]
is a tight nondegenerate spatial representation of $\cS$ and thus induces a nondegenerate spatial representation $\rho_\mu:L_Q\to\caL(L^p(\fX,\mu))$. In general, $\rho_\mu$ is not injective. For example, if $Q$ consists of one vertex and one loop, then $L_Q\cong \C[t,t^{-1}]$ and $\rho_\mu$ is $1$-dimensional. 
\end{exa}

\begin{constru}\label{constru}
Let $X$ be a countable set, and let $\cI(X)$ be the inverse semigroup of all partially defined injections 
\[
X\supset\dom f\overset{f}\lra X.
\] 
Let $Q$ be a countable graph, $\cS=\cS(Q)$ its associated inverse semigroup and $S:\cS\to\cI(X)$ a semigroup homomorphism. For each $\alpha\in\cP=\cP(Q)$, set 
$X_\alpha=\dom(S_\alpha)$. We shall assume that $S$ is tight, i.e. that the identities \eqref{decompo:regv} and \eqref{decompo:nondeg} are satisfied. Let $\cG=\cG(\cS,X)$ be the groupoid of germs, as defined in \cite{exel}*{Section 4}. The elements of $\cG$ are equivalence classes
$[\alpha\beta^*,x]$ where $r(\alpha)=r(\beta)$, $x\in X_\beta$; the equivalence relation is determined by the prescription that $[\alpha\beta^*,x]=[\alpha\gamma \gamma^*\beta^*,x]$ for any $\gamma\in \cP$ with $s(\gamma)=r(\alpha)$. For $\alpha\beta^*\in\cS\setminus\{0\}$, put 
\[
\Theta_{\alpha,\beta}=\{[\alpha\beta^*,x]: x\in X_\beta\}\subset\cG.
\]
Let $\cA(\cG)\subset\operatorname{map}(\cG,\C)$ be the linear subspace generated by the characteristic functions $\chi_{\Theta_{\alpha,\beta}}$, $(\alpha\beta^*\in\cS\setminus\{0\})$. One checks that $\cA(\cG)$ is an algebra under the convolution product (it is in fact the Steinberg algebra of $\cG$ \cite{stein}) and that 
\begin{equation}\label{map:elfi}
\psi:L_Q\to \cA(\cG),\quad \phi(\alpha\beta^*)=\chi_{\Theta_{\alpha,\beta}}
\end{equation}
is an algebra homomorphism. Let 
\begin{equation}\label{map:laL}
L:\cA(\cG)\to \caL(\ell^p(\cG)), \quad L(f)(\xi)(h)=\sum_{ g\in\cG}f(g)\xi(g^{-1}h)
\end{equation}
This is well-defined because the domain and range functions are injective on each $\Theta_{\alpha,\beta}$. One checks that $L$ is a monomorphism. Consider the composite 
\begin{equation}\label{map:laro}
\rho=L\psi:L_Q\to \caL(\ell^p(\cG)).
\end{equation}
Let $\alpha\beta^*\in\cS(Q)$ and consider the following subsets of $\cG$:
\[
A=\{[\gamma\delta^*,\delta x]: \beta\ge \gamma x\},\qquad
B=\{[\alpha\beta^*\gamma\delta^*,\delta x]:\beta\ge\gamma x\}.
\]
The map 
\begin{gather*}
A\to B,\\
[\gamma\delta^*,\delta x]\mapsto [\alpha\beta^*,\gamma x][\gamma\delta^*,\delta x]=[\alpha\beta^*\gamma\delta^*,\delta x]
\end{gather*}
is bijective and thus induces a cardinality preserving bijection $S_{\alpha,\beta}:P(A)\to P(B)$. One checks that $\rho(\alpha\beta^*)$ is the spatial isometry with spatial system $(S_{\alpha,\beta},A,B,1)$. Hence $\rho$ is a spatial, nondegenerate representation.
\end{constru}

\begin{lem}\label{lem:grpd} Assume that in Construction \ref{constru}, one has $X_v\ne\emptyset$ for all $v\in Q^0$. Then \eqref{map:elfi} is an isomorphism 
and \eqref{map:laro} is an injective, nondegenerate spatial representation.
\end{lem}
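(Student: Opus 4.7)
My plan splits into easy and hard parts. First I would dispose of the elementary components: surjectivity of $\psi$ is immediate from the definition of $\cA(\cG)$, which is spanned by the $\chi_{\Theta_{\alpha,\beta}}=\psi(\alpha\beta^*)$; the spatiality of $\rho$ is already recorded in Construction \ref{constru}; and injectivity of $\rho=L\psi$ will follow from injectivity of $\psi$ (done below) combined with the fact, noted right after \eqref{map:laL}, that $L$ is a monomorphism. For nondegeneracy I would argue directly from \eqref{map:laL}: given $h_0\in\cG$ and $v\in Q^0$, the only possible contribution to $L(\chi_{\Theta_{v,v}})\delta_{h_0}$ comes from the unit $g=[v,r(h_0)]\in\Theta_{v,v}$, and such a $g$ exists precisely when $r(h_0)\in X_v$. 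Hence $\rho(v)\delta_{h_0}=\delta_{h_0}$ when $r(h_0)\in X_v$ and is $0$ otherwise; the tightness relation $X=\coprod_{v\in Q^0}X_v$ then forces every $\delta_{h_0}$ to lie in the range of $\rho$, giving density.

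The core task is injectivity of $\psi$. My plan is to invoke the graded uniqueness theorem for Leavitt path algebras (see \cite{libro_Pere}): any graded algebra homomorphism out of $L_Q$ that does not kill any vertex is injective. Recall from \eqref{grading} that $L_Q$ is $\Z$-graded with $|\alpha\beta^*|=|\alpha|-|\beta|$. I would equip $\cA(\cG)$ with the parallel grading via the length cocycle
\[
c:\cG\to\Z,\qquad c([\alpha\beta^*,x])=|\alpha|-|\beta|,
\]
setting $\cA(\cG)_n=\mspan\{\chi_{\Theta_{\alpha,\beta}}:|\alpha|-|\beta|=n\}$. After checking that $c$ descends to germs (the replacement $\alpha\beta^*\rightsquigarrow\alpha\gamma\gamma^*\beta^*$ leaves $|\alpha|-|\beta|$ invariant) and that convolution respects the grading (composition in $\cG$ adds values of $c$), $\psi$ becomes a graded homomorphism. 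The vertex condition then reduces to $\psi(v)=\chi_{\Theta_{v,v}}\ne 0$, which is exactly the standing assumption $X_v=\Theta_{v,v}\ne\emptyset$, and the graded uniqueness theorem finishes the proof.

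The main obstacle is not the graded uniqueness theorem itself, which is classical, but the bookkeeping needed to check that the cocycle $c$ and the convolution of $\cA(\cG)$ interact as expected; i.e.\ that the ambiguity in representing a germ $[\alpha\beta^*,x]$ by different pairs $(\alpha\gamma,\beta\gamma)$ does not disturb the identity $\psi(\alpha\beta^*)\cdot\psi(\gamma\delta^*)=\psi(\alpha\beta^*\gamma\delta^*)$ under convolution. This amounts to carefully unpacking the equivalence relation defining the groupoid of germs; it poses no essential difficulty, but is the one place where the argument must be written with some care.
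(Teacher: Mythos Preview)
Your proposal is correct and follows essentially the same route as the paper: the paper also grades $\cA(\cG)$ via the length cocycle $c([\alpha\beta^*,x])=|\alpha|-|\beta|$, observes that $\psi$ is graded and that $\psi(v)=\chi_{\Theta_{v,v}}\ne 0$ under the hypothesis $X_v\ne\emptyset$, and then invokes the graded uniqueness theorem \cite{libro_Pere}*{Theorem 2.2.15}. Your treatment of the peripheral points (surjectivity, nondegeneracy, spatiality, and injectivity of $\rho$ via that of $L$) is more explicit than the paper's, which either leaves them implicit or defers to Construction~\ref{constru}, but the substance is the same.
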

\begin{proof} Put $\cA(\cG)_n=\mspan\{\psi(\alpha\beta^*):|\alpha\beta^*|=n\}$; we have 
\begin{equation}\label{decompo:ag}
\cA(\cG)=\sum_n\cA(\cG)_n.
\end{equation}
Let $c:\cG\to \Z$, $c([\alpha\beta^*,x])=|\alpha\beta^*|$; note that the elements of
$\cA(\cG)_n$ are supported in $c^{-1}(\{n\})$. It follows from this that the sum in \eqref{decompo:ag} is direct. Moreover, because $c$ is a groupoid homomorphism, we have  $\cA(\cG)_n\cA(\cG)_m\subset\cA(\cG)_{n+m}$. Thus $\psi$ is a homogeneous homomorphism of graded algebras. But for $v\in Q^0$, $\psi(v)$ is the characteristic function of $\{[v,x]:x\in X_v\}$ which is nonempty by hypothesis, so $\psi(v)\ne 0$. By \cite{libro_Pere}*{Theorem 2.2.15} this implies that $\psi$ is an isomorphism. 
\end{proof}

\begin{prop}\label{prop:sigmarep} Let $Q$ be a countable graph. Then $L_Q$ has an injective, nondegenerate spatial representation 
$L_Q\to\caL(\ell^p(\N))$.
\end{prop}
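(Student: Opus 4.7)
My approach is to apply Lemma~\ref{lem:grpd} to an explicitly constructed countable set carrying a tight $\cS(Q)$-action. Concretely, it suffices to produce a countable set $X$ together with a tight semigroup homomorphism $S\colon\cS(Q)\to\cI(X)$ satisfying $\dom(S_v)\ne\emptyset$ for every $v\in Q^0$. Granting this, Lemma~\ref{lem:grpd} yields an injective, nondegenerate spatial representation $L_Q\to\caL(\ell^p(\cG))$, where the germ groupoid $\cG$ is countable since both $\cS(Q)$ and $X$ are, and is in fact countably infinite whenever $Q^0\ne\emptyset$ (the degenerate case $L_Q=0$ being trivial). Any bijection $\cG\cong\N$ then induces an isometric isomorphism $\ell^p(\cG)\cong\ell^p(\N)$.

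For the construction, set $X:=Q^0\times\N$ and $X_v:=\{v\}\times\N$. For each $v\in Q^0$ with $s^{-1}(v)\ne\emptyset$, choose a pairwise disjoint family $\{X_e\}_{e\in s^{-1}(v)}$ of infinite subsets of $X_v$, requiring $X_v=\coprod_{e\in s^{-1}(v)}X_e$ whenever $v$ is regular; such a family exists because $s^{-1}(v)$ is at most countable. For each edge $e$, fix a bijection $\phi_e\colon X_{r(e)}\to X_e$. These data assemble into a semigroup homomorphism $S\colon\cS(Q)\to\cI(X)$ in which each vertex $v$ acts as the identity on $X_v$, each edge $e$ acts via the partial injection determined by $\phi_e$, and a general element $\alpha\beta^*$ acts as the corresponding composition of the $\phi$'s and $\phi^{-1}$'s along $\alpha$ and $\beta$. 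The tightness conditions \eqref{decompo:regv} and \eqref{decompo:nondeg} are built in, and each $X_v$ is nonempty.

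The main obstacle is the verification that $S$ respects the multiplication of $\cS(Q)$, which amounts to analyzing the product $(\alpha\beta^*)(\gamma\delta^*)$ according to whether the paths $\beta$ and $\gamma$ are comparable. The key nontrivial input, established by induction on the first position at which two paths differ, is that incomparable paths with a common source yield disjoint sets $X_\beta\cap X_\gamma=\emptyset$, mirroring the vanishing $\beta^*\gamma=0$ in $\cS(Q)$; the comparable cases reduce algebraically to $S_\beta^{-1}S_{\beta\epsilon}=S_\epsilon$ on the appropriate domain. Once this bookkeeping is complete, Lemma~\ref{lem:grpd} delivers the desired injective, nondegenerate spatial representation on $\ell^p(\N)$.
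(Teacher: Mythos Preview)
Your proof is correct and follows essentially the same route as the paper: construct a tight action of $\cS(Q)$ on a countably infinite set with every $X_v$ nonempty, then invoke Lemma~\ref{lem:grpd}. The paper's construction packages the same data through abstract bijections $X\cong Q^0\times X$ and $X_v\cong R_v\times X$ rather than your explicit $X=Q^0\times\N$, and it simply asserts that the resulting $S$ is a semigroup homomorphism, whereas you sketch the verification; otherwise the arguments coincide.
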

\begin{proof} Let $X$ be any countably infinite set. Because $X$ is infinite and $\#Q^0\le\# X$, there exists a bijection $\phi:X\to Q^0\times X$. For $v\in Q^0$, set $X_v=\phi^{-1}(\{v\}\times X)$; observe that \eqref{decompo:nondeg} is satisfied by construction. Put $Q^1(v,-)=s^{-1}(\{v\})\subset Q^1$ and let
\[
R_v=\left\{\begin{matrix}Q^1(v,-)& v\in\reg(Q)\\
                         \{v\}\coprod Q^1(v,-)& v\in\sing(Q).\end{matrix}\right.
\]
Because $\#X_v=\#X$ is infinite and $\#R_v\le \# X_v$, there is a bijection $\zeta_v:X_v\to R_v\times X$. Set $X_e=\zeta_{s(e)}^{-1}(\{e\}\times X_{s(e)})$
$(e\in Q^1)$. By construction, \eqref{decompo:regv} is satisfied. For $e\in Q^1$, let $r^{-1}\times 1:\{r(e)\}\times X\to \{e\}\times X$ be the obvious bijection. Define a semigroup homomorphism $S:\cS(Q)\to \cI(X)$ by setting
\[
S_v=1_{X_v},\quad S_e=\zeta^{-1}_{s(e)}(r^{-1}\times 1)\phi:X_{r(e)}\to X_{e}, \quad S_{e^*}=S_e^{-1}\qquad (v\in Q^0,\ \ e\in Q^1).
\]
Let $\cG$ be the groupoid of germs associated to this action of $S$ on $X$, and consider the nondegenerate spatial representation $\rho:L_Q\to\caL(\ell^p(\cG))$ of \eqref{map:laro}. Then $\rho$ is injective by Lemma \ref{lem:grpd}; furthermore, $\#\cG=\aleph_0$ and any bijection $\cG\cong \N$ induces a spatial isometric isomorphism $\ell^p(\cG)\cong\ell^p(\N)$. 
\end{proof}

\section{Matrix algebras and spatial representations}\label{sec:matspat}

Let $1\le n\le \infty$ and let $A$ be an algebra. Write $M_n$ for the algebra of $n\times n$-matrices with finitely many nonzero entries, and $M_nA=M_n\otimes A$. If $i,j\in\N$, we write $E_{i,j}$ for the canonical matrix unit. Let $Q$ be a countable graph, $X$ a $\sigma$-finite measure space, and $1\le p<\infty$. Call a representation $\rho:M_n(L_Q)\to \caL(L^p(X))$ \emph{spatial} if for every $x\in Q^0\cup Q^1$ and $i,j$, $\rho(E_{i,j}\otimes x)$ is a spatial partial isometry with reverse $\rho(E_{j,i}\otimes x^*)$.

\begin{rem}\label{rem:matspatleav}
Let $n\le \infty$ and let $M_nQ$ be the graph obtained by adding a head
\[
\xymatrix{\dots\ar[r]&v_i\ar[r]^{e^v_i}&v_{i-1}\ar[r]^{e^v_{i-1}}&\dots\ar[r]^{e^v_2}& v_1\ar[r]^{e^v_1}& v}
\]
for each $v\in Q^0$ and $i<n$. By \cite{abratom}*{Propositions 9.3 and 9.8}, there is a $*$-isomorphism 
\begin{gather}\label{map:LM=ML}
L_{M_nQ}\iso M_nL_Q,\\
v\mapsto E_{1,1}\otimes v, \quad v_i\mapsto E_{i+1,i+1}\otimes v\nonumber\\
e\mapsto E_{1,1}\otimes e, \quad e^v_i\mapsto E_{i+1,i}\otimes e\nonumber
\end{gather}
It is clear that a representation $M_nL_Q\to\caL(L^p(X))$ is spatial in the matricial sense above if and only if its composition with the map \eqref{map:LM=ML} is a spatial representation of $L_{M_nQ}$. 
\end{rem}

\begin{exa}\label{ex:rhoinfty}
 Let $\sigma:L_Q\to \caL(L^p(X))$ be a spatial representation. Let $I=\{1,\dots,n\}$ if $n$ is finite, and $I=\N$ if $n=\infty$. 
We have a canonical isometric isomorphism $L^p(I\times X)\cong \ell^p(I,L^p(X))$. Let 
\begin{gather*}
\sigma_I:M_nL_Q\to\caL(\ell^p(I,L^p(X)))\\
 \sigma_I(E_{i,j}\otimes a)(\xi)(k)=\delta_{k,i}\sigma(a)(\xi(j)). 
\end{gather*}
Then $\sigma_I$ is spatial. Indeed if $a\in \cS(Q)$ and $\sigma(a)$ is a spatial isometry with domain support $E$ and rank support $F$, then $\sigma_I(E_{i,j}\otimes a)$ is a spatial isometry with domain support $\{j\}\times E$ and range support $\{i\}\times F$. We remark that for $I=\{1,\dots,n\}$, $\sigma_I$ is the representation induced by the \emph{amplification} of $\sigma$ in the sense of \cite{gardelupi}*{Definition 4.10}.
\end{exa}

\begin{lem}\label{lem:matspat} 
Let $Q$ be a countable graph, $I$ a countable set, $X$ a $\sigma$-finite measure space, $1\le p<\infty$, and $\rho:M_IL_Q\to \caL(L^p(X))$ a nondegenerate spatial representation. Then there exist a $\sigma$-finite measure space $Y$, a spatial representation $\sigma:L_Q\to \caL(L^p(Y))$ and a spatial isometric isomorphism $u:\ell^p(I,L^p(Y))\to L^p(X)$ such that 
$\rho(a)=u\sigma_I(a)u^{-1}$ ($a\in L_Q$).
\end{lem}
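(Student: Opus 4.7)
The plan is to use the matrix units $E_{i,j}\otimes v$ ($i,j\in I$, $v\in Q^0$) to decompose $X$ and to reconstruct $\sigma$ from a single ``corner'' of $\rho$. First, for each pair $(i,v)\in I\times Q^0$ the idempotent $\rho(E_{i,i}\otimes v)$ is spatial, so by Example~\ref{exa:idempotent} it is the canonical projection onto $L^p(X_{i,v})$ for a measurable $X_{i,v}\subset X$. The relations $(E_{i,i}\otimes v)(E_{j,j}\otimes w)=\delta_{i,j}\delta_{v,w}(E_{i,i}\otimes v)$ force the $X_{i,v}$ to be pairwise disjoint, and writing any $a\in L_Q$ as a finite sum $\sum_{v,w\in F}vaw$ one obtains the containment $\rho(M_IL_Q)L^p(X)\subset\bigoplus_{(i,v)}L^p(X_{i,v})$, which combined with nondegeneracy of $\rho$ yields $X=\coprod_{(i,v)}X_{i,v}$.

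Next, fix $i_0\in I$; without loss of generality $i_0=1$. Put $Y=\coprod_{v\in Q^0}X_{1,v}$ and define $\sigma:L_Q\to\caL(L^p(Y))$ by $\sigma(a)\xi=\rho(E_{1,1}\otimes a)\xi$ for $\xi\in L^p(Y)$. The identity $(E_{1,1}\otimes a)(E_{1,1}\otimes b)=E_{1,1}\otimes ab$ and the fact that $\rho(E_{1,1}\otimes v)$ is the projection onto $L^p(X_{1,v})\subset L^p(Y)$ show that $\sigma$ is a well-defined algebra homomorphism preserving $L^p(Y)$. On each $x\in Q^0\cup Q^1$, $\sigma(x)$ is the restriction to the invariant subspace $L^p(Y)$ of the spatial partial isometry $\rho(E_{1,1}\otimes x)$, and its reverse is $\sigma(x^*)=\rho(E_{1,1}\otimes x^*)|_{L^p(Y)}$; thus $\sigma$ is spatial.

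For each $(i,v)\in I\times Q^0$, the operator $\rho(E_{i,1}\otimes v)$ is a spatial partial isometry with domain support $X_{1,v}$ and range support $X_{i,v}$, as one verifies from the products $(E_{i,1}\otimes v)(E_{1,1}\otimes v)=E_{i,1}\otimes v$ and $(E_{i,i}\otimes v)(E_{i,1}\otimes v)=E_{i,1}\otimes v$. Let $(S_{i,v},X_{1,v},X_{i,v},h_{i,v})$ be its spatial system. Since $I\times Y=\coprod_{(i,v)}\{i\}\times X_{1,v}$ and $X=\coprod_{(i,v)}X_{i,v}$ are coproduct decompositions, the transformations $S_{i,v}$ glue to a single bijective measurable set transformation $S:\cB_{I\times Y}\to\cB_X$, and the phase factors $h_{i,v}$ glue to a unimodular measurable function $h$ on $X$. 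Let $u:\ell^p(I,L^p(Y))=L^p(I\times Y)\to L^p(X)$ be the spatial isometric isomorphism with spatial system $(S,I\times Y,X,h)$.

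The identity $\rho(a)u=u\sigma_I(a)$ is then checked on generators $a=E_{i,j}\otimes x$ with $x\in Q^0\cup Q^1\cup (Q^1)^*$, evaluated on $\xi\in\ell^p(I,L^p(Y))$ concentrated on a single summand $L^p(X_{1,v})$ of the $j$-th component; both sides reduce to $\rho(E_{i,1}\otimes y)\xi_j$ for some $y\in L_Q$ dictated by the algebra relations of $L_Q$ (for instance, for $x=e\in Q^1$ one uses $ev=\delta_{v,r(e)}e$ and $s(e)e=e$). I expect the main obstacle to be the careful bookkeeping needed in the third step when $I\times Q^0$ is infinite: verifying that the local spatial systems $(S_{i,v},X_{1,v},X_{i,v},h_{i,v})$ genuinely assemble to a global spatial system, and hence that $u$ is spatial and not merely isometric. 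This point is especially important for $p=2$, where Banach--Lamperti is unavailable and spatiality must be established by exhibiting the spatial realization directly.
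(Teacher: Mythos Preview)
Your proof is correct and follows essentially the same approach as the paper: decompose $X$ via the spatial idempotents $\rho(E_{i,i}\otimes v)$, take $Y$ to be the $i_0$-corner, define $\sigma(a)=\rho(E_{i_0,i_0}\otimes a)$, and build $u$ out of the spatial partial isometries $\rho(E_{i,i_0}\otimes v)$. The paper is terser---it simply declares $u=\bigoplus_{i,v}\rho(E_{i,i_0}\otimes v)$ to be a spatial isometric isomorphism and leaves the verification that $u$ conjugates $\sigma_I$ to $\rho$ to the reader---whereas you spell out the gluing of the local spatial systems and the intertwining check on generators; this added care is welcome, particularly your observation that for $p=2$ one must exhibit the spatial realization directly rather than appeal to Banach--Lamperti.
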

\begin{proof}
Let $X_{i,v}$ be the domain support of the spatial idempotent $\rho(E_{i,i}\otimes v)$ ($\in I, v\in Q^0$). Set $X_i=\coprod_{v\in Q^0}X_{i,v}$; we have $X=\coprod_{i\in I}X_i$. Hence we have $L^p$-direct sum decompositions $L^p(X)=\bigoplus_{i\in I}L^p(X_i)$ and $L^p(X_i)=\bigoplus_{v\in Q^0}L^p(X_{i,v})$. Choose $i_0\in I$, and let $Y=X_{i_0}$. Then $u=\bigoplus_{i,v}\rho(E_{i,i_0}\otimes v)$ is a spatial isometric isomorphism $\ell^p(I,L^p(Y))=\bigoplus_{i\in I}L^p(Y)\to L^p(X)$. Let $\sigma:L_Q\to \caL(L^p(Y))$, $\sigma(a)=\rho(E_{i_0,i_0}\otimes a)$. One checks that $u$ conjugates $\sigma_I$ to $\rho$, concluding the proof. 
\end{proof}

\section{A spatiality criterion}\label{sec:crite}

We write $M_n=M_n\C$ for the matrix algebra and $M_\infty=\bigcup_nM_n$. We have a natural identification $M_n=\caL(\ell^p(\{1,\dots,n\})$ for $n<\infty$
and a natural embedding $M_\infty\to \caL(\ell^p(\N))$; by pulling back the operator norm, we get a norm $||\ \ ||_p$ on $M_n$ ($1\le n\le\infty$) which makes the latter into a normed algebra $M^p_n$. If $I$ is a set and 
\begin{equation}\label{uln}
\ul{n}=(n_i)_{i\in I}
\end{equation}
is a family with $1\le n_i\le \infty$, we write 
\begin{equation}\label{mnp}
M_{\ul{n}}^p=\bigoplus_{i\in I}M_{n_i}^p
\end{equation} 
for the algebraic direct sum equipped with the supremum norm $||(a_i)||=\sup_{i\in I}||a_i||_p$. We write $E^i_{a,b}$ $(i\in I)$, $1\le a,b\le n_i$ for the canonical matrix unit. 

The following proposition generalizes \cite{chris1}*{Theorem 7.2}.

\begin{prop}\label{prop:mat} Let $p\in [1,\infty)$, $p\ne 2$, $I$ a countable set, $\ul{n}$ as in \eqref{uln}, and $M_{\ul{n}}^p$ as in \eqref{mnp}. Let $X=(X,\cB,\mu)$ be a $\sigma$-finite measure space with $\mu\ne 0$. The following are equivalent for a nondegenerate representation $\rho:M_{\ul{n}}^p\to \caL(L^p(X))$.
\item[i)] $\rho(E^i_{a,b})$ is a spatial partial isometry for all $i\in I, 1\le a,b\le n_i$.
\item[ii)] $\rho$ is contractive.
\end{prop}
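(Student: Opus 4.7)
The plan is to reduce both implications to \cite{chris1}*{Theorem 7.2}, which is the analogous equivalence for a single matrix algebra $M_n^p$ ($1\le n\le\infty$). Throughout, the key structural fact is that $M_{\ul{n}}^p$ is a bona fide algebraic direct sum of the $M_{n_i}^p$, so any $a\in M_{\ul{n}}^p$ has finite support and $\|a\|=\sup_i\|a_i\|_p$; together with the fact that on $L^p$-direct sums the operator norm of a block-diagonal operator equals the sup of the block norms, this reduces norm estimates to statements about each summand separately.

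For (i)$\Rightarrow$(ii), I would first observe that distinct matrix units $E^i_{a,a}$ are mutually orthogonal idempotents in $M_{\ul{n}}^p$; hence their images $\rho(E^i_{a,a})$ are mutually orthogonal spatial idempotents, and by Example \ref{exa:idempotent} each equals $\pi_{X^i_a}$ for some $X^i_a\in\cB$, with the $X^i_a$ pairwise disjoint up to measure zero. Since $\rho(M_{\ul{n}}^p)L^p(X)\subset\bigoplus_{i,a}L^p(X^i_a)$, the nondegeneracy hypothesis forces $X=\coprod_{i,a}X^i_a$. Grouping, $X^i:=\coprod_a X^i_a$ gives an $L^p$-direct sum $L^p(X)=\bigoplus_i L^p(X^i)$, and $\rho$ restricted to $M_{n_i}^p$ takes values in $\caL(L^p(X^i))$ (annihilating the other summands). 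Each such restriction is nondegenerate, has all matrix units sent to spatial partial isometries, so by \cite{chris1}*{Theorem 7.2} it is isometric. Taking direct sums yields $\|\rho(a)\|=\sup_i\|\rho_i(a_i)\|=\sup_i\|a_i\|_p=\|a\|$, so $\rho$ is isometric and in particular contractive.

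For (ii)$\Rightarrow$(ii)$\Rightarrow$(i), fix $i_0\in I$ and consider the restriction $\sigma:=\rho|_{M_{n_{i_0}}^p}$, which is contractive because the inclusion $M_{n_{i_0}}^p\hookrightarrow M_{\ul{n}}^p$ is isometric. The goal is to apply \cite{chris1}*{Theorem 7.2} to $\sigma$; the obstacle is that $\sigma$ need not be nondegenerate on all of $L^p(X)$. To handle this, pass to $Y=\overline{\sigma(M_{n_{i_0}}^p)L^p(X)}$, which is $\sigma$-invariant and on which $\sigma$ is nondegenerate. When $n_{i_0}<\infty$, $Y=\sigma(1_{M_{n_{i_0}}})L^p(X)$ is the range of a contractive idempotent that is central in $\rho(M_{\ul{n}}^p)$; when $n_{i_0}=\infty$, one argues via the increasing union $M_\infty^p=\bigcup_n M_n^p$ and the fact that the sub-idempotents $\rho(\sum_{a=1}^n E^{i_0}_{a,a})$ already generate $Y$. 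In either case, the contractive $M_{n_{i_0}}$-module structure on $Y$ forces $Y$ to be isometrically an $L^p$-space (of the form $L^p(A)$ for some measurable $A\subset X$), to which \cite{chris1}*{Theorem 7.2} now applies, yielding spatiality of each $\sigma(E^{i_0}_{a,b})$ and hence of $\rho(E^{i_0}_{a,b})$.

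The main obstacle is the last step: showing that the invariant subspace $Y$ on which $\sigma$ acts nondegenerately is (isometrically) an $L^p$-space, so that Phillips' theorem can be invoked in spite of the absence of a unit in $M_{\ul{n}}^p$ and in spite of the fact that contractive idempotents in $\caL(L^p(X))$ need not be spatial in general (cf.\ Remark \ref{rem:idempotent}). The crucial input is that the rigidity of $L^p$-geometry for $p\ne 2$, combined with the $M_n$-module action on $Y$ via mutually orthogonal contractive idempotents $\rho(E^{i_0}_{a,a})$ summing to $\sigma(1)$, is enough to pin down the $L^p$-structure.
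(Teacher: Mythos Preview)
Your argument for (i)$\Rightarrow$(ii) is essentially the paper's: decompose $X=\coprod_i X^i$ via the spatial idempotents and reduce to a single block. One minor caveat: \cite{chris1}*{Theorem 7.2} as used in the paper covers only finite $n$, so for $n_i=\infty$ you still have to build the conjugating isometry $L^p(X^i)\to\ell^p(\N,L^p(X^i_1))$ by hand (as the paper does) rather than cite the theorem outright; this is routine.

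For (ii)$\Rightarrow$(i), however, there is a real gap that you flag but do not close. Restricting to a single block $M_{n_{i_0}}^p$ destroys nondegeneracy, and you then need the essential range $Y$ to equal $L^p(A)$ for some measurable $A\subset X$. And\^o's theorem gives only that $Y$ is abstractly isometric to \emph{some} $L^p(Z)$; spatiality of the matrix units inside $\caL(L^p(Z))$ does not transfer to spatiality in $\caL(L^p(X))$ unless the projection onto $Y$ is already spatial --- which is exactly part of what you are trying to prove. Your appeal to ``rigidity of $L^p$-geometry together with the $M_n$-module action'' is too vague to fill this in, and Remark~\ref{rem:ojo} shows it cannot be automatic.

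The paper avoids this circularity by working globally rather than block-by-block. By nondegeneracy of the \emph{whole} representation, $B=\bigoplus_{i,a}\rho(E^i_{a,a})L^p(X)$ is dense in $L^p(X)$. For fixed $(i,a)$ and $z\in\mathbb{S}^1$, the operator on $B$ that scales the $(i,a)$-summand by $z$ and fixes the rest is, by contractivity of $\rho$ applied to suitable finite diagonal elements, of norm $\le 1$; together with its inverse it therefore extends to a surjective isometry $u_{i,a}(z)$ of $L^p(X)$, hence spatial by Banach--Lamperti. Then $\rho(E^i_{a,a})=\tfrac12(1-u_{i,a}(-1))$ is a spatial idempotent on the nose, so its range is $L^p(X^i_a)$ for a genuine $X^i_a\subset X$, and the off-diagonal units follow by another application of Banach--Lamperti. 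The moral: manufacture global surjective isometries first and extract the idempotents from them, rather than trying to cut down to a subspace whose $L^p$-structure you do not yet control.
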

\begin{proof}
Assume that i) holds. Then each $\rho(E^i_{a,a})$ is a spatial idempotent, whence by Example \ref{exa:idempotent} there is $X^{i}_a\in\cB$ such that 
$\rho(E^i_{a,a})=\pi_{X_a^i}$ is the canonical projection. For each $i\in I$ put $\cN_i=\N$ if $n_i=\infty$ and $\cN_i=\{1,\dots,n_i\}$ if $n_i<\infty$. Because $\rho$ is nondegenerate, we have $X=\coprod_{i\in I}\coprod_{a\in\cN_i}X^i_a$. Put $X^i=\coprod_{a\in\cN_i}X^i_a$. By restriction, we obtain a nondegenerate representation $\rho^i:M_{n_i}\to \caL(L^p(X^i))$ satisfying i); hence we may assume that $I=\{1\}$ has only one element.  If $n<\infty$, nondegeneracy implies that $\rho(1)=1$, so $\rho$ is contractive by \cite{chris1}*{Theorem 7.2}. Assume $n=\infty$. Proceed as in loc. cit., using the partial isometries $\rho(E_{1,a}):L^p(X_a)\to L^p(X_1)$
to construct an isometry $u:L^p(X)\to \ell^p(\N,L^p(X_1))=\ell^p(\N)\otimes_pL^p(X_1)$ (the $L^p$-tensor product) that conjugates $\rho$ to the contractive representation $T\mapsto T\otimes 1$. It follows that $\rho$ is contractive, concluding the proof that i)$\Rightarrow$ii). Assume now that ii) holds. Then $\{\rho(E^i_{a,a}):i\in I, a\in\cN_i\}$ is a family of orthogonal idempotents. Let  $B^i_a=\rho(E^i_{a,a})L^p(X)$; then the algebraic direct sum $B=\bigoplus_{i,a}B^i_a$ is dense in $L^p(X)$. For each $z\in\mathbb{S}^1$, $i\in I$ and $a\in \cN_i$ define an operator $u_{i,a}(z):B\to B$ as multiplication by $z$ on $B^i_a$ and the identity on every other summand. Because $\rho$ is contractive, $u_{i,a}(z)$ has norm $1$, so it extends to a norm $1$ operator $u_{i,a}(z)\in\caL(L^p(X))$.
Since this also holds for $u_{i,a}(z^{-1})$, $u_{i,a}(z)$ is a bijective isometry. Hence it is spatial, by the Banach-Lamperti theorem. Now proceed as in \cite{chris1}*{page 42} to deduce that $\rho(E^i_{a,a})=(1-u_{i,a}(-1))/2$ is a spatial idempotent. Hence there exists $X_a^i\in\cB$ such that $B_a^i=L^p(X_a^i)$ and $X=\coprod X_a^i$. Since $\rho(E^i_{a,b})$ is an isometry $B^i_b\to B^i_a$, another application of the Banach-Lamperti theorem shows that it is spatial.
\end{proof}

Recall that the Leavitt path algebra is equipped with a $\Z$-grading $L_Q=\bigoplus_n(L_Q)_n$ where $(L_Q)_n$ is as in \eqref{grading}. Write 
$(L_Q)_{0,n}\subset (L_Q)_0$ for the subalgebra linearly spanned by the elements of the form $\alpha\beta^*$ with $r(\alpha)=r(\beta)$ and $|\alpha|=|\beta|\le n$. We have an increasing union
\[
(L_Q)_0=\bigcup_{n=0}^\infty (L_Q)_{0,n}.
\]
Each $(L_Q)_{0,n}$ is isomorphic to a direct sum of (possibly infinite dimensional) matrix algebras. 

\begin{thm}\label{thm:contraspat}(cf. \cite{gardelupi}*{Theorem 7.7} ) Let $X=(X,\cB,\mu)$ be a $\sigma$-finite measure space with $\mu\ne 0$, $p\in [1,\infty)$, $p\ne 2$, and $Q$ a countable graph. The following are equivalent for a nondegenerate representation $\rho:L_Q\to \caL(L^p(X))$.
\begin{itemize}
\item[i)] $\rho$ is spatial.
\item[ii)] $||\rho(e)||,||\rho(e^*)||\le 1$ ($e\in Q^1$) and the restriction of $\rho$ to $((L_Q)_{0,1},||\ \ ||_p)$ is contractive. 
\end{itemize}
\end{thm}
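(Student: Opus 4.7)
The plan is to prove the two implications separately, with the forward direction being a quick application of Proposition \ref{prop:mat} and the converse requiring in addition the Banach--Lamperti theorem cited earlier in the paper.

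For (i)$\Rightarrow$(ii): spatial partial isometries have norm at most one, so $\|\rho(e)\|,\|\rho(e^*)\|\le 1$ is immediate. The subalgebra $(L_Q)_{0,1}$ is a direct sum of matrix algebras, and under its canonical $L^p$-operator norm it is of the form $M_{\ul{n}}^p$ considered in Section \ref{sec:crite}. I would first observe that nondegeneracy of $\rho$ on $L_Q$ implies nondegeneracy of $\rho|_{(L_Q)_{0,1}}$, since every $\alpha\beta^*\in L_Q$ equals $s(\alpha)\cdot\alpha\beta^*$, so $\rho(L_Q)L^p(X)\subset\sum_{v\in Q^0}\rho(v)L^p(X)$. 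The matrix units of $(L_Q)_{0,1}$ are $v\in Q^0$ and $ef^*$ with $r(e)=r(f)$, and spatiality of $\rho$ makes each of their images a spatial partial isometry. Proposition \ref{prop:mat} then yields that $\rho|_{(L_Q)_{0,1}}$ is contractive.

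For (ii)$\Rightarrow$(i): the same nondegeneracy argument lets me apply Proposition \ref{prop:mat} (this time in the easy direction) to $\rho|_{(L_Q)_{0,1}}$, obtaining that every matrix unit maps to a spatial partial isometry. In particular, for each $v\in Q^0$ there is $X_v\in\cB$ with $\rho(v)=\pi_{X_v}$, and for each $e\in Q^1$ there is $X_e\in\cB$ with $\rho(ee^*)=\pi_{X_e}$; the relation $v\cdot ee^*=ee^*$ when $s(e)=v$ forces $X_e\subset X_{s(e)}$. It remains to show $\rho(e)$ is a spatial partial isometry with reverse $\rho(e^*)$.

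The Leavitt relations $\rho(e)=\rho(ee^*)\rho(e)\rho(r(e))$ and $\rho(e^*)=\rho(r(e))\rho(e^*)\rho(ee^*)$ show that $\rho(e)$ vanishes off $L^p(X_{r(e)})$ and has range in $L^p(X_e)$, while $\rho(e^*)$ goes the other way, both with norm at most one. From (CK1), $\rho(e^*)\rho(e)=\rho(r(e))$ acts as the identity on $L^p(X_{r(e)})$, and $\rho(e)\rho(e^*)=\rho(ee^*)$ acts as the identity on $L^p(X_e)$. For $\xi\in L^p(X_{r(e)})$ the chain
\[
\|\xi\|=\|\rho(e^*)\rho(e)\xi\|\le\|\rho(e)\xi\|\le\|\xi\|
\]
forces equality throughout, so $\rho(e)$ restricts to a surjective isometry $L^p(X_{r(e)})\to L^p(X_e)$ (and analogously $\rho(e^*)$ restricts to its set-theoretic inverse). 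Since $p\ne 2$, the Banach--Lamperti theorem (\cite{chris1}*{Theorem 6.9 and Lemma 6.15}) guarantees that this isometric isomorphism is spatial, so $\rho(e)$ is a spatial partial isometry; uniqueness of the reverse (given its domain support $X_e$, range support $X_{r(e)}$, and inverting relation with $\rho(e)$) identifies it with $\rho(e^*)$.

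The main technical obstacle is really the bookkeeping in the converse: matching the canonical matrix structure of $(L_Q)_{0,1}$ with the setup of Proposition \ref{prop:mat}, verifying the supports $X_v$ and $X_e$ extracted from the spatial idempotents fit together correctly (so that ``reverse'' in the sense of \eqref{spatiso} matches what the Leavitt relations predict), and using the norm bounds to coerce an a priori contractive map into a genuine isometric isomorphism. Once the Banach--Lamperti step is justified by the hypothesis $p\ne 2$, the argument closes cleanly.
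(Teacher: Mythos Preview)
Your argument is correct and follows essentially the same route as the paper: Proposition \ref{prop:mat} in both directions handles the matricial part $(L_Q)_{0,1}$, and then the norm bounds on $\rho(e),\rho(e^*)$ together with Banach--Lamperti upgrade the resulting bijections $L^p(X_{r(e)})\to L^p(X_e)$ to spatial partial isometries. Your write-up is in fact more explicit than the paper's on two points the paper leaves implicit (that nondegeneracy passes from $L_Q$ to $(L_Q)_{0,1}$, and the $\|\xi\|=\|\rho(e^*)\rho(e)\xi\|\le\|\rho(e)\xi\|\le\|\xi\|$ chain); the only minor slip is the parenthetical ``easy direction'' for Proposition \ref{prop:mat} in the converse, since contractive $\Rightarrow$ spatial is actually the harder implication there.
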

\begin{proof} The implication i)$\Rightarrow$ ii) is clear using Proposition \ref{prop:mat}. Assume that ii) holds; then $\rho(e)$ is a bijective isometry $\rho(r(e))L^p(X)\to \rho(ee^*)L^p(X)$ with inverse $\rho(e^*)$. By Proposition \ref{prop:mat}, $\rho(v)$ and $\rho(ee^*)$ are spatial idempotents $(v\in Q^0)$, $(e\in Q^1)$. Hence it follows from the Banach-Lamperti theorem \cite{chris1}*{Theorem 6.9} and from \cite{chris1}*{Lemma 6.15} that $\rho(e)$ and $\rho(e^*)$ are spatial. This concludes the proof. 
\end{proof}

\begin{rem}\label{rem:ojo} The assumption that $\rho$ be nondegenerate in necessary in both Proposition \ref{prop:mat} and Theorem \ref{thm:contraspat}. For example the trivial graph on one vertex has Leavitt algebra $\C$, which equals $M_1^p$ for all $1\le p<\infty$, and the representation $\C\to M^p_2$ that maps $1$ to the idempotent of Remark \ref{rem:idempotent} is contractive but not spatial. 
\end{rem}

\section{The $L^p$-operator algebra \(\cO^p(Q)\)}\label{sec:opq}

\begin{defi}\label{defi:lpopalg} Let $p\in [1,\infty)$. An \emph{$L^p$-operator algebra} is a Banach algebra $B$ together with a norm on each $M_nB$ that makes into a Banach algebra in such a way that there exists a nondegenerate representation $\rho:B\to\caL(L^p(X))$ for some $\sigma$-finite measure space $X$, such that $M_n\rho:M_nB\to M_n\caL(L^p(X))=\caL(L^p(\coprod_{i=1}^nX))$ is isometric for each $1\le n<\infty$. We call $B$ \emph{standard} if $X$ can be chosen to be a standard Borel space. A homomorphism $f:A\to B$ between $L^p$-operator algebras is \emph{$p$-completely contractive} (resp. \emph{isometric}) if $M_nf$ is contractive (resp. isometric) for every $n$. 
\end{defi}

\begin{rem}\label{rem:lpopalgstand} By \cite{chris2}*{Proposition 1.25}, any separable $L^p$-operator algebra admits an isometric representation in a separable, whence standard $L^p$-space.  Thus a separable $L^p$-operator algebra is automatically standard. 
\end{rem}

\begin{rem}\label{rem:lpopalgnondeg} If $p\ne 1$ and $B$ has a contractive approximate unit, then the condition that the isometric representation in Definition \ref{defi:lpopalg} be nondegenerate can be dropped, by \cite{gardethiel2}*{Theorem 3.19}. 
\end{rem}

A \emph{spatial} $p$-seminorm is a seminorm $h:L_Q\to\R_{\ge 0}$ such that there exist
a $\sigma$-finite measure space $X$ and spatial representation $\rho:L_Q\to\caL(L^p(X))$ such that $h(a)=||\rho(a)||$ ($a\in L_Q$). Observe that by Lemma \ref{lem:nondegenerate}, every spatial seminorm is induced by a nondegenerate spatial representation. Put
\begin{align}
\pssn(Q)=&\{h:L_Q\to\R_{\ge 0} \text{ spatial $p$-seminorm}\},\label{pssn}\\
||a||=&\sup\{h(a): h\in\pssn(Q) \}\label{spatnorm}.
\end{align}
By Proposition \ref{prop:sigmarep}, $||\ \ ||$ is a norm. Write  $\cO^p(Q)=\overline{L_Q}^{||\ \ ||}$ for the completion of $L_Q$ with respect to the norm \eqref{spatnorm}; $\cO^p(Q)$ is a Banach algebra, and the canonical map $L_Q\to \cO^p(Q)$ is injective, again by Proposition \ref{prop:sigmarep}. Since $Q$ is countable, there is a countable family $\{\rho_n\}$ of $\sigma$-finite nondegenerate spatial representations such that $||\ \ ||$ is the norm associated to the $L^p$-direct sum 
\begin{equation}\label{map:rhogood}
\rho=\bigoplus_n\rho_n:L_Q\to \caL(L^p(\coprod_nX_n))
\end{equation}
which is a nondegenerate spatial representation. Hence $\cO^p(Q)$ is isometrically isomorphic to the closure of $\rho(L_Q)$.

\begin{prop}\label{prop:matopq}
Let $Q$ be a countable graph. Then $\cO^p(Q)$ has a canonical structure of $L^p$-operator algebra such that there is an isometric isomorphism 
$M_n\cO^p(Q)\cong \cO^p(M_nQ)$ ($\infty>n\ge 1$). 
\end{prop}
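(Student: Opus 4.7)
The plan is to define the matricial norms on $\cO^p(Q)$ by transporting the Banach algebra structure of $\cO^p(M_nQ)$ across the isomorphism $L_{M_nQ}\iso M_nL_Q$ of \eqref{map:LM=ML}; concretely, I set $M_n\cO^p(Q):=\cO^p(M_nQ)$. Two things then need to be verified: that this is internally consistent (the $n=1$ case recovers $\cO^p(Q)$ and the corner embedding $a\mapsto E_{1,1}\otimes a$ is isometric), and that there exists a single nondegenerate spatial representation $\rho:L_Q\to\caL(L^p(Y))$ on a $\sigma$-finite space whose amplifications $M_n\rho$ are all isometric, so that Definition \ref{defi:lpopalg} is satisfied.

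The key input is the correspondence between nondegenerate spatial representations of $M_nL_Q$ and amplifications of those of $L_Q$: by Remark \ref{rem:matspatleav} they agree with the nondegenerate spatial representations of $L_{M_nQ}$, and by Lemma \ref{lem:matspat} together with Example \ref{ex:rhoinfty} they are, up to spatial isometric conjugation, precisely the amplifications $M_n\sigma=\sigma_{\{1,\dots,n\}}$ of nondegenerate spatial representations $\sigma$ of $L_Q$. Consequently, for $a\in M_nL_Q$,
\[
\|a\|_{\cO^p(M_nQ)}=\sup_\sigma \|M_n\sigma(a)\|,
\]
the supremum ranging over nondegenerate spatial representations $\sigma$ of $L_Q$. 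The construction of $M_nQ$ adds heads indexed by $i<n$, so for $n=1$ no head is added and $M_1Q=Q$, giving $\cO^p(M_1Q)=\cO^p(Q)$. For $n>1$ and $a\in L_Q$, evaluating at $E_{1,1}\otimes a$ yields $\|E_{1,1}\otimes a\|_{\cO^p(M_nQ)}=\sup_\sigma\|\sigma(a)\|=\|a\|_{\cO^p(Q)}$, so the corner embedding is isometric. Since $L_Q$ has countable dimension, the same is true of $M_nL_Q$, which is therefore a dense subalgebra of $\cO^p(M_nQ)=M_n\cO^p(Q)$.

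It remains to exhibit a single $\rho$ witnessing the $L^p$-operator algebra axiom. For each $k\ge 1$, a standard countability argument produces a countable family $\{\sigma_{k,j}\}_{j\in\N}$ of nondegenerate spatial representations of $L_Q$ on $\sigma$-finite spaces $Y_{k,j}$ with
\[
\sup_j\|M_k\sigma_{k,j}(a)\|=\|a\|_{\cO^p(M_kQ)}\qquad (a\in M_kL_Q)
\]
(choose representations realizing the sup on a countable dense subset of $M_kL_Q$ and extend by uniform continuity, using that every amplification $M_k\sigma$ is contractive for the $\cO^p(M_kQ)$-norm). Form the $L^p$-direct sum $\rho=\bigoplus_{k,j}\sigma_{k,j}$ on $L^p(Y)$ with $Y=\coprod_{k,j}Y_{k,j}$ a $\sigma$-finite measure space; this is again a nondegenerate spatial representation of $L_Q$. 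For any fixed $n$, $M_n\rho=\bigoplus_{k,j}M_n\sigma_{k,j}$ has norm $\sup_{k,j}\|M_n\sigma_{k,j}(a)\|$ on $a\in M_nL_Q$, and already the $k=n$ summands realize $\|a\|_{\cO^p(M_nQ)}$. Hence $M_n\rho$ is isometric on the dense subalgebra $M_nL_Q$ of $M_n\cO^p(Q)$, so isometric on all of $M_n\cO^p(Q)$. This verifies Definition \ref{defi:lpopalg} and yields the asserted isometric isomorphism $M_n\cO^p(Q)\cong\cO^p(M_nQ)$.

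The main obstacle is the simultaneous realization of every matricial norm by a single $\rho$; once this countability bookkeeping is done, the proposition follows essentially tautologically from the spatial-amplification correspondence and the definition of $\cO^p(-)$ as the spatial completion.
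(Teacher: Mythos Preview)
Your proposal is correct and follows essentially the same approach as the paper: identify the spatial representations of $M_nL_Q$ with amplifications $M_n\sigma$ of spatial representations $\sigma$ of $L_Q$ via Remark \ref{rem:matspatleav} and Lemma \ref{lem:matspat}, then build a single representation $\rho$ as a countable $L^p$-direct sum so that every $M_n\rho$ is isometric. The paper is terser---it packages each level-$k$ countable family into a single $\rho_k$ by invoking the construction around \eqref{map:rhogood}, then sums over $k$---while you unpack the double indexing $(k,j)$ and explicitly verify the corner-embedding isometry and the $n=1$ case; but the argument is the same in substance.
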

\begin{proof} By Remark \ref{rem:matspatleav}, the canonical map $M_nL_Q\cong L_{M_nQ}\to \cO^p(M_nQ)$ is universal for $L^p$- spatial representations. By Lemma \ref{lem:matspat} and the discussion above, for each $n$ there is a spatial representation $\rho_n:L_Q\to\caL(L^p(X_n))$ such that $||\ \ ||_n:=||M_n\rho_n(\ \ )||$ is the supremum of all $p$-spatial norms on $L_{M_n(Q)}$. Let $X=\coprod_nX_n$ and let $\rho=\bigoplus_n\rho_n:L_Q\to \caL(L^p(X))$ be the 
$L^p$-direct sum. Then $||M_n\rho(\ \ )||=||\ \ ||_n$ for all $n\ge 1$, and we have isometric isomorphisms
 \[
\cO^p(M_nQ)\cong\ol{M_n\rho(M_n(L_Q))}=\ol{M_n(\rho(L_Q))}=M_n(\ol{\rho(L_Q)})\cong M_n\cO^p(Q).
\]
\end{proof}

\begin{thm}\label{thm:contraspat15}
Let $X$ be $\sigma$-finite measure space with nonzero measure, $p\in [1,\infty)$, $p\ne 2$, $Q$ a countable graph, $\hat{\rho}:\cO^p(Q)\to \caL(L^p(X))$ a nondegenerate representation and $\rho:L_Q\to \caL(L^p(X))$ the restriction of $\hat{\rho}$. Then the following conditions are equivalent:
\begin{itemize}
\item[i)]  $\rho$ is spatial.
\item[ii)] $\hat{\rho}$ is contractive. 
\end{itemize}
\end{thm}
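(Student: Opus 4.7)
The implication (i) $\Rightarrow$ (ii) is essentially built into the definition of $\cO^p(Q)$: if $\rho$ is spatial, then $a\mapsto \|\rho(a)\|$ is one of the spatial $p$-seminorms in $\pssn(Q)$, hence $\|\rho(a)\|\le\|a\|$ for every $a\in L_Q$, and by density of $L_Q$ in $\cO^p(Q)$ the continuous extension $\hat\rho$ is contractive.

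For the converse, my plan is to verify the hypotheses of the spatiality criterion (Theorem \ref{thm:contraspat}) directly for $\rho$. First I would observe that $\rho$ is nondegenerate, since $\rho(L_Q)=\hat\rho(L_Q)$ is norm-dense in $\hat\rho(\cO^p(Q))$ and $\hat\rho$ acts nondegenerately by hypothesis. Both required inequalities will then be obtained by proving them already inside $\cO^p(Q)$ and composing with the contractive $\hat\rho$. For the edges, every spatial representation of $L_Q$ sends $e\in Q^1$ to a spatial partial isometry of norm $\le 1$, and the supremum defining $\|\cdot\|_{\cO^p(Q)}$ preserves this, so $\|\rho(e)\|\le \|e\|_{\cO^p(Q)}\le 1$ and analogously for $e^*$. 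For the restriction to $(L_Q)_{0,1}$, the key reduction is to show that the canonical inclusion $((L_Q)_{0,1},\|\ \|_p)\hookrightarrow (\cO^p(Q),\|\ \|)$ is contractive; this amounts to proving $\|\sigma(a)\|\le \|a\|_p$ for every nondegenerate spatial representation $\sigma:L_Q\to\caL(L^p(Y))$ and every $a\in (L_Q)_{0,1}$ (the nondegeneracy assumption is justified by Lemma \ref{lem:nondegenerate}). Such a bound will follow from Proposition \ref{prop:mat} applied to $\sigma|_{(L_Q)_{0,1}}$: spatiality of $\sigma$ sends each matrix unit to a spatial partial isometry, the restriction is nondegenerate because $Y=\coprod_v Y_v$ by Lemma \ref{lem:nondeg}, and the hypothesis $p\ne 2$ is exactly what Proposition \ref{prop:mat} needs. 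Once this inequality is known, composing with the contractive $\hat\rho$ yields the needed contractivity of $\rho|_{(L_Q)_{0,1}}$, and Theorem \ref{thm:contraspat} closes the argument.

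The main subtlety I expect to have to address concerns the decomposition of $(L_Q)_{0,1}$ at singular vertices. At a regular or sink vertex $v$ the local summand is a pure matrix algebra $M_{n_v}$, but at an infinite-emitter $v$ it is the unitization $\tilde M_\infty=\C v+M_\infty$ (the element $v$ is not a finite sum of the $ee^*$'s), which is not strictly of the form $M_n^p$ covered by Proposition \ref{prop:mat}. I would handle this by splitting $\sigma|_{B_v}$ through the spatial idempotent $\sigma(v)=\pi_{Y_v}$ and applying Proposition \ref{prop:mat} to the $M_\infty$-part of the restriction on its own support, with the extra $\C v$-direction controlled by the fact that $\sigma(v)$ is itself a spatial idempotent of norm $1$; assembling these contributions gives the desired contractive bound.
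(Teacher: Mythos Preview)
Your argument is correct and follows essentially the same route as the paper. For (i)$\Rightarrow$(ii) the paper argues exactly as you do (the spatial $\rho$ induces a contractive map on $\cO^p(Q)$ which must agree with $\hat\rho$ by density). For (ii)$\Rightarrow$(i) the paper simply writes ``then $\rho$ is spatial by Theorem~\ref{thm:contraspat}''; unpacking this one-line citation amounts precisely to what you do: use contractivity of $\hat\rho$ together with $\|e\|_{\cO^p(Q)}\le 1$ and $\|a\|_{\cO^p(Q)}\le\|a\|_p$ for $a\in (L_Q)_{0,1}$ to verify hypothesis (ii) of Theorem~\ref{thm:contraspat}. The latter inequality is exactly the (i)$\Rightarrow$(ii) direction of Theorem~\ref{thm:contraspat} applied to an arbitrary nondegenerate spatial $\sigma$, which in turn the paper deduces from Proposition~\ref{prop:mat}---so your reduction to Proposition~\ref{prop:mat} is just one step deeper in the same chain.

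Your caution about infinite emitters is reasonable, but note that the paper sidesteps it by asserting (just before Theorem~\ref{thm:contraspat}) that each $(L_Q)_{0,n}$ \emph{is} a direct sum of matrix algebras and equipping it with the corresponding $\|\cdot\|_p$; under that identification Proposition~\ref{prop:mat} applies directly and no separate treatment of the unitization is needed. Your proposed workaround (splitting off $\C v$ via the spatial idempotent $\sigma(v)$) would also work and is harmless, but the paper does not regard this as an issue requiring attention.
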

\begin{proof}
If $\rho$ is spatial then it induces a contractive homomorphism $\hat{\rho}':\cO^p(Q)\to\caL(L^p(X))$ which agrees with $\rho$ on $L_Q$; since $\hat{\rho}$ does the same, we must have $\hat{\rho}=\hat{\rho}'$. This proves that i)$\Rightarrow$ii). Conversely if $ii)$ holds, then $\rho$ is spatial by Theorem  
\ref{thm:contraspat}.
\end{proof}

\begin{thm}\label{thm:contraspat2}
Let $X$ be $\sigma$-finite measure space with nonzero measure, $p$ as in Theorem \ref{thm:contraspat15}, $Q$ a countable graph, $\hat{\rho}:\cO^p(Q)\to \caL(L^p(X))$ a representation and $\rho:L_Q\to \caL(L^p(X))$ the restriction of $\hat{\rho}$. Further assume either that $p\ne 1$ or that $Q^0$ is finite. Then the following conditions are equivalent:
\item[i)] There exist a $\sigma$-finite measure space $Y$, an isometry $\iota:L^p(Y)\to L^p(X)$, a norm $1$ operator $\pi:L^p(X)\to L^p(Y)$ such that $\pi\iota=1$, and a spatial representation $\rho':L_Q\to\caL(L^p(Y))$, such that for $f:\caL(L^p(Y))\to\caL(L^p(X))$, $f(T)=\iota T\pi$, the following diagram commutes
\[
\xymatrix{L_Q\ar[dr]_{\rho'}\ar[rr]^\rho&&\caL(L^p(X))\\
               &\caL(L^p(Y))\ar[ur]_f&}
\]
\item[ii)] $\hat{\rho}$ is contractive. 
\end{thm}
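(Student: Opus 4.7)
The plan is to prove $(i)\Rightarrow(ii)$ by a short universality argument and then split $(ii)\Rightarrow(i)$ into two cases according to which alternative in the hypothesis holds. For $(i)\Rightarrow(ii)$: since $\rho'$ is spatial it extends by the universal property of $\cO^p(Q)$ to a contractive homomorphism $\hat{\rho}':\cO^p(Q)\to\caL(L^p(Y))$. The map $f(T)=\iota T\pi$ is multiplicative because $\pi\iota=1$ (so $f(T)f(S)=\iota T\pi\iota S\pi=\iota TS\pi=f(TS)$), and has norm at most $\|\iota\|\,\|\pi\|\le 1$. Hence $f\circ\hat{\rho}'$ is a contractive homomorphism $\cO^p(Q)\to\caL(L^p(X))$ agreeing with $\hat{\rho}$ on the dense subalgebra $L_Q$, so $\hat{\rho}=f\circ\hat{\rho}'$ is contractive.

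For $(ii)\Rightarrow(i)$ with $Q^0$ finite, the element $\mathbf{1}:=\sum_{v\in Q^0}v\in L_Q$ is a unit of $\cO^p(Q)$, so contractivity of $\hat{\rho}$ makes $\hat{\rho}(\mathbf{1})$ a norm-one idempotent on $L^p(X)$. And\^o's theorem \cite{ando} produces a $\sigma$-finite measure space $Y$, an isometric inclusion $\iota:L^p(Y)\hookrightarrow L^p(X)$ and a norm-one operator $\pi:L^p(X)\to L^p(Y)$ satisfying $\pi\iota=1$ and $\iota\pi=\hat{\rho}(\mathbf{1})$. Because $\hat{\rho}(\mathbf{1})$ is a two-sided unit on the image of $\hat{\rho}$, the rule $\hat{\rho}'(a):=\pi\hat{\rho}(a)\iota$ defines a unital (hence nondegenerate) contractive homomorphism $\cO^p(Q)\to\caL(L^p(Y))$; Theorem \ref{thm:contraspat15} then shows that $\rho':=\hat{\rho}'|_{L_Q}$ is spatial. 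The triangle closes since $\iota\rho'(a)\pi=\hat{\rho}(\mathbf{1})\hat{\rho}(a)\hat{\rho}(\mathbf{1})=\hat{\rho}(a)=\rho(a)$ for every $a\in L_Q$.

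For $(ii)\Rightarrow(i)$ with $p\ne 1$, the key step is to show that the partial sums $u_F:=\sum_{v\in F}v$ (for finite $F\subset Q^0$) constitute a contractive approximate unit of $\cO^p(Q)$: in any nondegenerate spatial representation the image of $u_F$ is a sum of vertex idempotents with pairwise disjoint range supports (cf.\ Remark \ref{rem:comparables}), hence itself a spatial idempotent, so $\|u_F\|=1$; and any $a\in L_Q$ satisfies $u_Fa=a=au_F$ once $F$ contains the source and range vertices of a monomial presentation of $a$, which, together with the uniform bound $\|u_F\|\le 1$ and density, upgrades to an approximate unit on the completion. Since $\cO^p(Q)$ is an $L^p$-operator algebra by Proposition \ref{prop:matopq}, \cite{gardethiel2}*{Theorem 3.19} delivers $Y,\iota,\pi$ together with a nondegenerate contractive $\hat{\rho}':\cO^p(Q)\to\caL(L^p(Y))$ satisfying $\hat{\rho}=f\circ\hat{\rho}'$ for $f(T)=\iota T\pi$; then $\rho':=\hat{\rho}'|_{L_Q}$ is nondegenerate and contractive, hence spatial by Theorem \ref{thm:contraspat15}, and the required diagram commutes.

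The main technical hurdle is the verification of the approximate-unit property in the $p\ne 1$ case, as it is what allows Gardella--Thiel to be invoked; once that is in place, both cases reduce to a short bookkeeping argument combining universality with the external inputs of And\^o and Gardella--Thiel.
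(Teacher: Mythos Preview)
Your proof is correct and follows essentially the same route as the paper. The paper unifies the two cases by first producing a contractive idempotent on $L^p(X)$ with range the closure of $\rho(L_Q)L^p(X)$ (using $\rho(\mathbf{1})$ when $Q^0$ is finite, and \cite{gardethiel2}*{Corollary 3.13} for the contractive approximate unit when $p\ne 1$), then applying And\^o's theorem to identify that range with an $L^p(Y)$ and checking spatiality via Theorem~\ref{thm:contraspat}; you split the cases and, in the $p\ne 1$ branch, invoke the more packaged \cite{gardethiel2}*{Theorem 3.19} instead of Corollary~3.13 plus And\^o, but the content is the same.
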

\begin{proof}
If i) is satisfied, then $\rho'$ factors through a contractive representation $\hat{\rho'}:\cO^p(Q)\to\caL(L^p(Y))$. Thus $f\hat{\rho'}=\hat{\rho}$ is contractive. Assume conversely that ii) holds. Let $E\subset\caL(L^p(X))$ be the closure of $\rho(L_Q)\caL(L^p(X))$. If $Q^0$ is finite then $\cO^p(Q)$ is unital with unit $1=\sum_{v\in Q^0}v$ which has norm $1$; thus $E$ is the image of the contractive idempotent $\rho(1)$. For general $Q$, the family $\{\sum_{v\in F}v\}$ indexed by the finite subsets of $Q^0$ is a contractive approximate unit of $\cO^p(Q)$; hence if $p\ne 1$, then again $E$ is the image of a contractive idempotent, by \cite{gardethiel2}*{Corollary 3.13}. Hence under either hypothesis, by  
\cite{ando}*{Theorem 4} there are a contractive projection $\pi':L^p(X)\to E$ and  an isometric isomorphism $h:E\to L^p(Y)$ for some standard Borel space $Y$. If $p=1$ but  Put $\pi=h\pi'$, let $\iota$ be $h^{-1}$ followed by the inclusion $E\subset L^p(X)$, and set $\rho'(a)=h\rho(a)h^{-1}$. It is clear that the diagram commutes; moreover, $\rho'$ is spatial by Theorem \ref{thm:contraspat}.  
\end{proof}

\begin{rem}\label{rem:gardelupi}
Let $\cS(Q)$ be the semigroup of \eqref{semiQ} and let $1<p<\infty$. Let $F^p_\tight(\cS(Q))$ be the standard $L^p$-operator algebra of 
\cite{gardelupi}*{Definition 6.7}; $F^p_\tight(\cS(Q))$ is universal for tight $L^p$-representations of $\cS(Q)$ which are spatial in the sense of \cite{gardelupi}*{Definition 4.6} and take values in $L^p$-spaces of standard Borel spaces. As pointed out above, the spatiality notion of \cite{gardelupi} agrees with ours for $p\ne 2$. Hence by Lemma \ref{lem:tight} and the universal property of $\cO^p(Q)$, for $p\in (1,\infty)$, $p\ne 2$, we have a canonical contractive homomorphism
\begin{equation}\label{map:opfp}
\cO^p(Q)\to F^p_\tight(S(Q))\qquad (p\in (1,\infty),\quad p\ne 2).
\end{equation}
Moreover, since the $p$-operator space structure on $F^p_\tight(S(Q))$ is defined in \cite{gardelupi} so that $M_n(F^p_\tight(S(Q))=F^p_\tight(S(M_nQ))$, the induced map $M_n(\cO^p(Q))\to M_n(F^p_\tight(S(Q)))$ is also contractive, by Proposition \ref{prop:matopq}. In other words \eqref{map:opfp} is $p$-completely contractive.
\end{rem}

\begin{prop}\label{prop:opq=fps}
The map \eqref{map:opfp} is a $p$-completely isometric isomorphism.
\end{prop}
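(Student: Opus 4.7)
The plan is to produce a single representation of $\cO^p(Q)$ that both realizes its universal norm and lives on a standard Borel $L^p$-space, forcing the already contractive map \eqref{map:opfp} to be isometric; then to upgrade the result to matricial levels by feeding the same argument into the graphs $M_nQ$.

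First I would observe that $\cO^p(Q)$ is a separable $L^p$-operator algebra: it carries the structure of an $L^p$-operator algebra by Proposition \ref{prop:matopq}, and $L_Q$ is a countable-dimensional dense $\C$-subalgebra since $Q$ is countable. By Remark \ref{rem:lpopalgstand}, $\cO^p(Q)$ is then standard, so there exists a nondegenerate representation $\hat\rho\colon \cO^p(Q)\to \caL(L^p(Y))$ with $Y$ a standard Borel space and $M_n\hat\rho$ isometric for every $n\ge 1$. The restriction $\rho:=\hat\rho|_{L_Q}$ is nondegenerate (by density of $L_Q\subset\cO^p(Q)$) and contractive, hence spatial by Theorem \ref{thm:contraspat15}. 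Since our spatiality agrees with that of Gardella--Lupini for $p\ne 2$, Lemma \ref{lem:tight} translates $\rho$ into a tight spatial representation of $\cS(Q)$ on the standard $L^p$-space $L^p(Y)$, and the universal property of $F^p_\tight(\cS(Q))$ then produces a contractive homomorphism $\tilde\rho\colon F^p_\tight(\cS(Q))\to\caL(L^p(Y))$. The composite $\tilde\rho\circ\eqref{map:opfp}$ agrees with $\hat\rho$ on the dense subalgebra $L_Q$, and hence on all of $\cO^p(Q)$, so for every $a\in\cO^p(Q)$,
\[
\|a\|_{\cO^p(Q)}=\|\hat\rho(a)\|=\|\tilde\rho(\eqref{map:opfp}(a))\|\le\|\eqref{map:opfp}(a)\|_{F^p_\tight}\le\|a\|_{\cO^p(Q)}.
\]
This proves the base-level isometry; surjectivity then follows because the closed image contains the dense image of $L_Q$ in $F^p_\tight(\cS(Q))$.

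To upgrade to $p$-complete isometry I would run the same argument for each countable graph $M_nQ$: under the identifications $M_n\cO^p(Q)\cong \cO^p(M_nQ)$ of Proposition \ref{prop:matopq} and $M_nF^p_\tight(\cS(Q))\cong F^p_\tight(\cS(M_nQ))$ recalled in Remark \ref{rem:gardelupi}, the $n$-fold matricial amplification of \eqref{map:opfp} for $Q$ is simply \eqref{map:opfp} for $M_nQ$, which is a base-level isometric isomorphism by the previous paragraph. The main obstacle I anticipate is reconciling the two universes of measure spaces in play: $F^p_\tight$ registers only representations on standard Borel $L^p$-spaces, whereas $\cO^p(Q)$ a priori tests with all $\sigma$-finite ones, so an abstractly produced isometric spatial representation into $\caL(L^p(X))$ for an unwieldy $X$ would not directly factor through $F^p_\tight$. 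The leverage that bridges this gap is precisely Remark \ref{rem:lpopalgstand}: separability---which follows from the countability of $Q$---is enough to guarantee a single standard Borel model that simultaneously realizes the full operator norm and all its matricial amplifications.
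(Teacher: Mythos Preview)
Your proof is correct, but the route differs from the paper's. The paper argues in the opposite direction: it shows that $F^p_\tight(\cS(Q))$ already enjoys the universal property of $\cO^p(Q)$, by proving that \emph{every} $\sigma$-finite spatial representation $\rho:L_Q\to\caL(L^p(X))$ factors through one on a standard Borel space. The bridge it uses is the argument of Theorem \ref{thm:contraspat2} (the contractive approximate unit of $\cO^p(Q)$, Gardella--Thiel, and And\^o's theorem) to replace $X$ by a standard Borel $Y$. You instead exploit separability: since $L_Q$ is countable-dimensional, $\cO^p(Q)$ is separable, hence standard by Remark \ref{rem:lpopalgstand}, and a single nondegenerate standard-Borel representation realizing all matricial norms suffices to push the comparison through $F^p_\tight$. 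Your approach is more economical and needs only the packaged Phillips result rather than the And\^o/Gardella--Thiel machinery; the paper's approach, on the other hand, establishes the stronger fact that each individual $\sigma$-finite spatial representation (not merely the supremum norm) is dominated by a standard one.
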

\begin{proof}
It suffices to show that $F_\tight^p(\cS(Q))$ is universal for all $\sigma$-finite representations. Let $X$ be a $\sigma$-finite measure space and let $\rho:L_Q\to\caL(L^p(X))$ be a spatial representation; we have to show that $\rho$ factors through $L_Q\to F_\tight^p(\cS(Q))$. An argument similar to that of the proof of Theorem \ref{thm:contraspat2} shows that $\rho$ factors through a nondegenerate representation $\rho':L_Q\to \caL(L^p(Y))$ with $Y$ standard Borel. Thus $\rho$ factors through $L_Q\to F^p_\tight(\cS(Q))$, as required.
\end{proof}

\section{Spatial seminorms, desingularization, and source removal}

Let $Q$ be a countable, singular graph. Recall from \cite{Abr-Pino2}*{Section 5} that the \emph{desingularization} of $Q$ is a nonsingular graph $Q_\d$ obtained from $Q$ as follows. For each sink $v$, add an infinite tail
\begin{equation}\label{ftail}
v=v_0\overset{f_1}\to v_1\overset{f_2}\to v_2\overset{f_3}\to\cdots
\end{equation} 
For each infinite emitter $v$, number the elements of $s^{-1}(v)=\{e_1,e_2,\dots\}$ and add a tail \eqref{ftail} and an arrow $g_i:v_i\to r(e_i)$ 
$(1\le i)$. There is a canonical $*$-monomorphism \cite{Abr-Pino2}*{Proposition 5.5}
\begin{gather}
\phi_\d:L_Q\to L_{Q_\d},\label{map:qqd}\\
\phi_\d(v)=v,\qquad
\phi_\d(e)=\left\{\begin{matrix} e & s(e)\in\reg(Q)\\ f_1\cdots f_ig_i& e=e_i\end{matrix}\right.\nonumber
\end{gather}
If $Q$ is a graph such that $\sour(Q)\ne\emptyset$, we may embed it in the source-free graph $Q_\o$ obtained by adding an infinite head
\begin{equation}\label{rtail}
w=w_0\overset{f_1}\leftarrow w_1\overset{f_2}\leftarrow w_2\overset{f_3}\leftarrow\cdots
\end{equation}
at each $w\in \sour(Q)$. The obvious inclusion $Q\subset Q_\o$ induces an algebra monomorphism
\begin{equation}\label{map:qqr}
\phi_\o:L_Q\to L_{Q_\o}.
\end{equation} 
Observe that for $\#\in\{\d,\o\}$, composition with $\phi_\#$ sends spatial representations of $L_{Q_\#}$ to spatial representations of $L_Q$. Hence we have an induced map
\begin{equation}\label{map:seminorms}
\pssn(Q_\#)\to \pssn(Q),\quad h\mapsto h\circ \phi_\#.
\end{equation}

\begin{prop}\label{prop:seminorms}
Let $Q$ be a countable graph, $1\le p<\infty$, and $\#\in\{\o,\d\}$. Then the map \eqref{map:seminorms} is surjective. 
\end{prop}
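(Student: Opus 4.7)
Given $h\in\pssn(Q)$, by Lemma \ref{lem:nondegenerate} we may write $h(a)=\|\rho(a)\|$ for a nondegenerate spatial representation $\rho:L_Q\to\caL(L^p(X))$. The plan is to construct a $\sigma$-finite measure space $Y$ containing $X$ as a measurable subset together with a spatial representation $\tilde\rho:L_{Q_\#}\to\caL(L^p(Y))$ such that for every generator $x\in Q^0\cup Q^1\cup(Q^1)^*$, the operator $\tilde\rho(\phi_\#(x))$ has both domain and range support contained in $X$ and coincides with $\rho(x)$ on $L^p(X)$. Since both of these properties are preserved under composition, the same will then hold for $\tilde\rho(\phi_\#(a))$ for every $a\in L_Q$, so that $\tilde\rho(\phi_\#(a))$ acts as $\rho(a)$ on $L^p(X)$ and as $0$ on $L^p(Y\setminus X)$; hence $\|\tilde\rho(\phi_\#(a))\|=\|\rho(a)\|=h(a)$, exhibiting $h$ as the image under \eqref{map:seminorms} of the spatial $p$-seminorm on $L_{Q_\#}$ defined by $\tilde\rho$.

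For $\#=\o$, at each source $w\in\sour(Q)$ and each $i\ge 1$ take a disjoint measurable copy $Y_{w_i}$ of $(X_w,\mu_{|X_w})$, and let $Y=X\coprod\coprod_{w,i}Y_{w_i}$; this is $\sigma$-finite as a countable disjoint union of $\sigma$-finite pieces. Declare $\tilde\rho(w_i)=\pi_{Y_{w_i}}$, and define $\tilde\rho(f_i):L^p(Y_{w_{i-1}})\to L^p(Y_{w_i})$ to be the spatial isometric isomorphism induced by the tautological identification $Y_{w_{i-1}}\cong X_w\cong Y_{w_i}$, with phase factor $1$; on the remaining generators let $\tilde\rho$ agree with $\rho$. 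Pairwise disjointness of the $Y_{w_i}$'s yields CK1 and the orthogonality of the new vertex idempotents, CK2 at each new $w_i$ reduces to $w_i=f_if_i^*$ which is immediate by construction, and CK2 at vertices regular in both $Q$ and $Q_\o$ is inherited from $\rho$.

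For $\#=\d$, treat each sink $v$ exactly as a source was treated above, with $Y_{v_i}$ a disjoint copy of $X_v$ and $\tilde\rho(f_i)$ the canonical spatial isomorphism. For an infinite emitter $v$ with $s^{-1}(v)=\{e_1,e_2,\dots\}$, set $X_v^\circ=X_v\setminus\coprod_j X_{e_j}$ and, for every $i\ge 1$, take $Y_{v_i}$ to be a disjoint measurable copy of $X_v^\circ\coprod\coprod_{j\ge i}X_{e_j}$, decomposed as $Y_{v_i}=Y_{g_i}\coprod Y_{f_{i+1}}$ where $Y_{g_i}$ is the copy of $X_{e_i}$ and $Y_{f_{i+1}}$ the copy of $X_v^\circ\coprod\coprod_{j>i}X_{e_j}$. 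With the conventions $Y_{v_0}=X_v$ and $Y_{f_1}=X_v$, define $\tilde\rho(f_i):L^p(Y_{v_i})\to L^p(Y_{f_i})\subset L^p(Y_{v_{i-1}})$ via the canonical identification, and $\tilde\rho(g_i):L^p(X_{r(e_i)})\to L^p(Y_{g_i})$ as $\rho(e_i)$ followed by the identification $L^p(X_{e_i})\iso L^p(Y_{g_i})$. Then $\tilde\rho(f_1\cdots f_ig_i)=\rho(e_i)=\rho(\phi_\d(e_i))$ holds for all $i$ by construction; CK2 at $v$ reads $\tilde\rho(f_1f_1^*)=\pi_{X_v}=\rho(v)$, which is valid because $\tilde\rho(f_1)$ is a bijective spatial isomorphism onto $L^p(X_v)$; and CK2 at each $v_i$ is $v_i=f_{i+1}f_{i+1}^*+g_ig_i^*$, which is precisely the designed decomposition $Y_{v_i}=Y_{f_{i+1}}\coprod Y_{g_i}$. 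All other Leavitt relations reduce to the ones already satisfied by $\rho$, and spatiality of $\tilde\rho$ is automatic since it is built out of spatial partial isometries with pairwise disjoint supports.

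The main technical hurdle is the infinite-emitter case: the specific choice $Y_{v_i}\cong X_v^\circ\coprod\coprod_{j\ge i}X_{e_j}$ together with its two-piece decomposition $Y_{v_i}=Y_{g_i}\coprod Y_{f_{i+1}}$ is essentially forced by the simultaneous requirements that CK2 hold at every new vertex $v_i$ and that $\tilde\rho(\phi_\d(e_i))=\rho(e_i)$ for every $i\ge 1$. Once this bookkeeping is in place, checking on each generator $x\in Q^0\cup Q^1\cup (Q^1)^*$ that the supports of $\tilde\rho(\phi_\#(x))$ stay inside $X$ and that $\tilde\rho(\phi_\#(x))$ agrees there with $\rho(x)$ is a direct case analysis (trivial when $s(x)$ is regular in $Q$, and reading off the construction when $s(x)$ is a singular vertex), and the norm equality $\|\tilde\rho(\phi_\#(a))\|=h(a)$ propagates from generators to all of $L_Q$ by composition.
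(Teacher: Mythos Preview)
Your proof is correct and follows essentially the same route as the paper: enlarge $X$ to a space $Y=X\sqcup(\text{new pieces})$, extend $\rho$ to a spatial representation $\tilde\rho$ of $L_{Q_\#}$ using canonical identifications on the added pieces, and observe that $\tilde\rho\circ\phi_\#$ restricts to $\rho$ on $L^p(X)$ and annihilates $L^p(Y\setminus X)$, so the associated seminorm restricts to $h$. The only cosmetic differences are that the paper packages the last step as a commuting square with $\sigma(A)=sAt$ for the inclusion isometry $s:L^p(X)\hookrightarrow L^p(Y)$, and that your treatment of the infinite-emitter case spells out the decomposition $Y_{v_i}=Y_{g_i}\coprod Y_{f_{i+1}}$ and the CK2 verifications more explicitly than the paper does.
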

\begin{proof} It suffices to show that for every nonzero spatial representation $\rho:L_Q\to \caL(L^p(X))$ there exist a spatial representation $\rho_\#:L_{Q_\#}\to \caL(L^p(Y))$ and a spatial isometry $s:L^p(X)\to L^p(Y)$ with reverse $t$, with both $Y$ and $s$ depending on $\rho$ and $\#$, such that for the map
$\sigma:\caL(L^p(X))\to \caL(L^p(Y))$, $\sigma(A)= sAt$, the following diagram commutes:
\begin{equation}\label{diag:seminorms}
\xymatrix{L_Q\ar[d]_{\phi_\#}\ar[r]^\rho& \caL(L^p(X))\ar[d]^{\sigma}\\
         L_{Q_\#}\ar[r]_{\rho_\#}& \caL(L^p(Y))\\
}
\end{equation}
We begin by the case $\#=\o$. If $\alpha\in\cP(Q)$, we write $X_\alpha$ for the support of the spatial projection $\rho(\alpha\alpha^*)$. Regard $\N$ as a measure space with counting measure; set $Y:=X\sqcup\bigsqcup_{w\in\sour(Q)} (X_w \times\mathbb{N})$. Let $s$ and $t$ be the inverse isometries induced by the inclusion $X\subset Y$. The canonical identification $X_w\to X_w\times\{n\}$ induces an isometric spatial isomorphism 
$\tau_n:L^p(X_w)\rightarrow L^p(X_w\times\{n\})$. Extend $\rho$ along $\phi_\o$ to a map $\rho_\o:L_{Q_\o}\to \caL(L^p(Y))$ by setting 
$\rho_\o(w_n):=Id_{L^p(X_w\times\{n\})},$ $\rho_\o(f_n):=\tau_{n}\tau_{n-1}^{-1}$, $\rho_\o(f_n^*)=\tau_{n-1}\tau_{n}^{-1}$.
One checks that $\rho_\o$ is well-defined and makes \eqref{diag:seminorms} commute. Next we consider the case $\#=\d$. The measure space $Y$  will be a coproduct 
$$Y=X\sqcup\coprod_{v\in \sing(Q),n\ge 1}Y_{v_n};$$
the isometries $s,t$ will be those induced by the inclusion $X\subset Y$. 
For $v\in\sink(Q)$, we set $Y_{v_n}=X_v\times \{n\}$, $\tau_n:X_v\iso X_v\times \{n\}$ the obvious bijection, and put $\rho_\d(f_n)=\tau_{n-1}\tau_n^{-1}$. If $v\in \infem(Q)$ and $X'_v=X_v\setminus\coprod_{i=1}^\infty X_{e_i}$, we set 
$$Y_{v_n}=X'_v\sqcup\coprod_{i\ge n}X_{e_i}$$
and let $\rho_\d(f_n)$ be induced by the inclusion $Y_{v_n}\subset Y_{v_{n-1}}$ and $\rho_\d(g_n)$ by the composite of 
$\rho(e_n):L^p(X_{r(e_n)})\to L^p(X_{e_n})$ followed by the inclusion $L^p(X_{e_n})\subset L^p(Y_{v_n})$. One checks that this prescription defines a spatial representation $\rho_\d:L_{Q_\d}\to \caL(L^p(Y))$ that makes \eqref{diag:seminorms} commute.\end{proof}
\begin{coro}\label{coro:seminorms}
The canonical homomorphisms \eqref{map:qqd} and \eqref{map:qqr} induce isometric homomorphisms $\opq\to \cO^p(Q_\d)$ and $\opq\to\cO^p(Q_\o)$.
\end{coro}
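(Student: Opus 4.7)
The plan is to deduce the corollary directly from Proposition \ref{prop:seminorms} by comparing the two norms spatial-seminorm by spatial-seminorm.

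Recall that by definition, for $\# \in \{\o,\d\}$,
\[
\|a\|_{\cO^p(Q)} = \sup_{h \in \pssn(Q)} h(a), \qquad \|b\|_{\cO^p(Q_\#)} = \sup_{h' \in \pssn(Q_\#)} h'(b),
\]
and the map \eqref{map:seminorms} sends $h' \in \pssn(Q_\#)$ to $h' \circ \phi_\# \in \pssn(Q)$ (this is well-defined because composition with $\phi_\#$ pulls back spatial representations of $L_{Q_\#}$ to spatial representations of $L_Q$, as observed just before \eqref{map:seminorms}). Fix $a \in L_Q$. Then for every $h' \in \pssn(Q_\#)$, the seminorm $h' \circ \phi_\#$ lies in $\pssn(Q)$, so $h'(\phi_\#(a)) \le \|a\|_{\cO^p(Q)}$; taking the supremum over $h'$ gives
\[
\|\phi_\#(a)\|_{\cO^p(Q_\#)} \;\le\; \|a\|_{\cO^p(Q)}.
\]

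For the reverse inequality, I invoke the surjectivity statement of Proposition \ref{prop:seminorms}: every $h \in \pssn(Q)$ is of the form $h' \circ \phi_\#$ for some $h' \in \pssn(Q_\#)$. Therefore $h(a) = h'(\phi_\#(a)) \le \|\phi_\#(a)\|_{\cO^p(Q_\#)}$ for every $h \in \pssn(Q)$, and taking the supremum over $h$ yields
\[
\|a\|_{\cO^p(Q)} \;\le\; \|\phi_\#(a)\|_{\cO^p(Q_\#)}.
\]

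Combining the two inequalities, $\|\phi_\#(a)\|_{\cO^p(Q_\#)} = \|a\|_{\cO^p(Q)}$ for all $a \in L_Q$. Since $L_Q$ is dense in $\cO^p(Q)$, the algebra homomorphism $\phi_\#$ extends uniquely to an isometric homomorphism $\cO^p(Q) \to \cO^p(Q_\#)$, as claimed. There is no genuine obstacle here: all the real content has been packaged into Proposition \ref{prop:seminorms}, and the corollary is just the abstract fact that a surjection $\pssn(Q_\#) \twoheadrightarrow \pssn(Q)$ induced by pullback along an algebra map forces isometry of the supremum norms.
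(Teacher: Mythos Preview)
Your proof is correct and is exactly the argument the paper has in mind: the corollary is stated without proof immediately after Proposition~\ref{prop:seminorms}, and your two-inequality deduction from the surjectivity of \eqref{map:seminorms} is precisely the intended (routine) justification.
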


\section{A uniqueness theorem}\label{sec_O(Q)}

The purpose of this section is to prove the following theorem.

\begin{theorem}\label{thm:indep}
Let $Q$ be a countable graph and $1\le p<\infty$. If $L_Q$ is simple, then the set $\pssn(Q)$ of $p$-spatial seminorms on $L_Q$ has only one nonzero element. In particular, if $\rho:L_Q\to \caL(L^p(X))$ is any nonzero spatial representation and $\overline{\rho(L_Q)}\subset \caL(L^p(X))$ the operator norm closure, then the natural map is an isometric isomorphism 
$$\mathcal{O}^p(Q)\iso \overline{\rho(L_Q)}.$$ 
\end{theorem}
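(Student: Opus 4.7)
The plan is to reduce to the case where $Q$ is nonsingular and then compute $\|\rho(a)\|$ intrinsically, first on the degree-zero subalgebra and on homogeneous elements, and finally on general $a$ via the gauge action along the lines of \cite{chris1}*{Theorem 8.7}.

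For the reduction, Corollary~\ref{coro:seminorms} gives an isometric embedding $\cO^p(Q)\hookrightarrow \cO^p(Q_\d)$; Proposition~\ref{prop:seminorms} shows that every nonzero spatial $p$-seminorm on $L_Q$ is the pullback of one on $L_{Q_\d}$; and simplicity of $L_Q$ passes to $L_{Q_\d}$ via the standard Morita equivalence coming from desingularization. Thus we may assume $Q$ is nonsingular. Fixing any nonzero spatial representation $\rho:L_Q\to\caL(L^p(X))$, which we may take to be nondegenerate by Lemma~\ref{lem:nondegenerate}, simplicity of $L_Q$ makes $\rho$ injective and so every $X_v$ has positive measure; formula~\eqref{decompo:nondegreg} then yields $L^p(X)=\bigoplus_{\beta\in\cP_n}L^p(X_\beta)$ for each $n\ge 0$.

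For a homogeneous $a\in L_Q$ of degree $d$, Lemma~\ref{lem:escr_sum} together with iterated CK2 lets us write
\[
a=\sum_{v\in Q^0}\sum_{\substack{\alpha\in\cP_N,\,\beta\in\cP_n\\ r(\alpha)=r(\beta)=v}}\lambda^v_{\alpha,\beta}\,\alpha\beta^*,\qquad N-n=d,
\]
with scalar matrix $M_v:=(\lambda^v_{\alpha,\beta})$. The spatial isomorphisms $T_\gamma:=\rho(\gamma^*):L^p(X_\gamma)\iso L^p(X_v)$ satisfy $T_\alpha\,\rho(\alpha\beta^*)\,T_\beta^{-1}=\mathrm{id}_{L^p(X_v)}$, so conjugating by the direct sum of the $T_\gamma$'s identifies $\rho(a)|_{v\text{-block}}$ with the amplification $M_v\otimes\mathrm{id}_{L^p(X_v)}$. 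Since the $v$-blocks are $L^p$-direct summands of the domain and codomain, we conclude $\|\rho(a)\|=\max_{v\in Q^0}\|M_v\|_{p\to p}$, an intrinsic quantity. The case $d=0$ in particular recovers the canonical $M^p$-norm of Proposition~\ref{prop:mat}.

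For non-homogeneous $a$, applying Lemma~\ref{lem_exist_repres} with $u_v=z\rho(v)$ shows that $\rho\circ\alpha_z$ is spatial whenever $\rho$ is, whence the universal spatial norm is gauge-invariant and the gauge action extends to an isometric $\mathbb{S}^1$-action on $\cO^p(Q)$; averaging produces a contractive conditional expectation $E_0:\cO^p(Q)\to \overline{(L_Q)_0}$. Adapting the argument of \cite{chris1}*{Theorem 8.7}, one expresses $\|\rho(a)\|$ in terms of $\|E_0(\cdot)\|$ applied to elements constructed from $a$ and suitable paths in $Q$; since $E_0$ lands in the core, where the norm is intrinsic by the previous paragraph, this forces $\|\rho(a)\|$ to be independent of $\rho$. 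The isometric isomorphism $\cO^p(Q)\iso\overline{\rho(L_Q)}$ then follows immediately. The main obstacle is this last adaptation, in which Phillips' single-vertex argument must be extended to accommodate the vertex-indexed decomposition $X=\coprod_v X_v$ and paths ending at different vertices.
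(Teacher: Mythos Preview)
Your computation for homogeneous elements is correct and pleasant: writing a degree-$d$ element with all $\beta$'s of length $n$ and all $\alpha$'s of length $N=n+d$, the spatial isomorphisms $T_\gamma=\rho(\gamma^*)$ conjugate the $v$-block of $\rho(a)$ to $M_v\otimes\mathrm{id}_{L^p(X_v)}$, so $\|\rho(a)\|=\max_v\|M_v\|_{p\to p}$ is intrinsic. The problem is the passage to non-homogeneous $a$.

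The conditional expectation $E_0$ you build lives on $\cO^p(Q)$, i.e.\ it is contractive for the \emph{universal} spatial norm. To use it to compute $\|\rho(a)\|$ for a fixed $\rho$ you would need $z\mapsto\rho\circ\alpha_z$ to be isometric for the seminorm $\|\rho(\cdot)\|$ itself, so that $E_0$ descends to $\overline{\rho(L_Q)}$. But $\|\rho(\alpha_z(a))\|=\|(\rho\circ\alpha_z)(a)\|$ equals $\|\rho(a)\|$ only if one already knows that all spatial seminorms coincide, which is the statement you are proving. Without gauge invariance of $\|\rho(\cdot)\|$, the intrinsic description of the norm on $(L_Q)_0$ gives you nothing about $\|\rho(a)\|$ beyond the trivial upper bound $\|\rho(a)\|\le\|a\|_{\cO^p(Q)}$; the hard direction is the reverse inequality, and $E_0$ on $\cO^p(Q)$ cannot produce it.

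Your appeal to \cite{chris1}*{Theorem 8.7} does not rescue this: Phillips' proof there is \emph{not} a gauge/expectation argument but exactly the free/approximately-free comparison that the paper adapts. One tensors $\rho$ with the bilateral shift to obtain a free spatial representation $\rho^u$ with $\|\rho^u(a)\|\ge\|\rho(a)\|$ (Proposition~\ref{prop:ushift}); then, using approximate freeness together with the disjointness Lemma~\ref{lem:disj} (which exploits simplicity via cofinality and exits of cycles), one builds an explicit isometry $u:L^p(X)\to L^p(X\times Y)$ intertwining $\rho^u(b)$ with $(1\otimes\phi)(b)$ on a large enough subspace (Proposition~\ref{prop:larga}), forcing $\|\rho^u(a)\|\le\|\phi(a)\|$ for every spatial $\phi$. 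It is this comparison, not averaging over the gauge action, that yields uniqueness for non-homogeneous $a$. Note also that the paper reduces to graphs that are both nonsingular \emph{and} source-free (via $Q_\o$), since Proposition~\ref{prop:larga} requires $\sour(Q)=\emptyset$; your reduction only performs desingularization.
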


The proof of Theorem \ref{thm:indep} will be given at the end of the section, after a series of propositions, definitions, and lemmas, which adapt and extend those in \cite{chris1}*{Section ~8}.

\begin{definition}\label{def_free-approx.}
 Let $Q$ be a countable row-finite graph, $p\in [1,\infty)$, $X=(X,\cB,\mu)$ a $\sigma$-finite measure space, and  
$\rho: L_Q\rightarrow \caL(L^p(X))$ a representation.

\begin{enumerate}
  \item We say that $\rho$ is \emph{free} if there is a partition $X=\bigsqcup_{m\in\mathbb{Z}}E_m$, $E_m\in\cB$, such that for all $m\in\mathbb{Z}$, $e\in Q^1$, we have
      \begin{equation}\label{libre}
        \rho(e)(L^p(E_m))\subset L^p(E_{m+1})\ \ \ \text{and}\ \ \ \rho(e^*)(L^p(E_m))\subset L^p(E_{m-1}).
      \end{equation}

  \item We say that $\rho$ is \emph{approximately free} if for every  $N\in\mathbb{N}$, there are $n\geq N$ and a partition $X=\displaystyle{\bigsqcup_{m=0}^{n-1}}E_m$, $E_m\in\cB$, such that for $m=0,\ldots,n-1$ and all $e\in Q^1$ \eqref{libre} holds if we set $E_n=E_0$ and $E_{-1}=E_{n-1}$.
\end{enumerate}
\end{definition}

\begin{lemma}\label{lem:rho^u}
Let $p\geq 1$, $X=(X,\cB,\mu)$ and $Y=(Y,\cC,\nu)$ $\sigma$-finite measure spaces, $Q$ a row-finite graph, $\rho:L_Q\rightarrow \caL(L^p(X))$ a representation, and $u\in \mathcal{L}(L^p(Y))$ an invertible operator. Then, there is unique representation $\rho^u:L_Q\rightarrow \caL(L^p(X\times Y))$ such that, for all $e\in Q^1$, we have $\rho^u(e)=\rho(e)\otimes u$ and $\rho^u(e^*)=\rho(e^*)\otimes u^{-1}$.

Moreover, $\rho^u$ has the following properties:

\begin{itemize}
              \item [(a)] If $\alpha\in L_Q$ is homogeneous of degree $k$ with respect to the $\mathbb{Z}$-grading of \eqref{grading}, then $\rho^u(\alpha)=\rho(\alpha)\otimes u^k$.
              \item [(b)] If $u$ is isometric, $p\neq 2$ and $\rho$ is spatial, then $\rho^u$ is spatial.
              \item [(c)] If there is a partition $Y=\displaystyle{\coprod_{m\in\mathbb{Z}}}F_m$, $F_m\in\cC$, such that $u(L^p(F_m))=L^p(F_{m+1})\ \forall m\in\mathbb{Z}$, then $\rho^u$ is free in the sense of Definition \ref{def_free-approx.}.
            \end{itemize}
\end{lemma}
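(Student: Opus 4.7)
The plan is to construct $\rho^u$ by applying Lemma~\ref{lem_exist_repres} to the inflated representation $\tilde\rho \colon L_Q \to \caL(L^p(X \times Y))$, $\tilde\rho(x) := \rho(x) \otimes 1$, twisted by the family $u_v := \rho(v) \otimes u$. Each $u_v$ lies in the corner $\tilde\rho(v) \caL(L^p(X\times Y)) \tilde\rho(v)$ and is invertible there with inverse $\rho(v) \otimes u^{-1}$; Lemma~\ref{lem_exist_repres} thus yields a homomorphism $\rho^u := \tilde\rho_u$ with
\[
\rho^u(v) = \rho(v) \otimes 1, \qquad \rho^u(e) = u_{s(e)}\tilde\rho(e) = \rho(e) \otimes u, \qquad \rho^u(e^*) = \tilde\rho(e^*) u_{s(e)}^{-1} = \rho(e^*) \otimes u^{-1}.
\]
Uniqueness is immediate: the subalgebra generated by $Q^1 \cup (Q^1)^*$ already contains every $\rho^u(r(e)) = \rho^u(e^*)\rho^u(e) = \rho(r(e))\otimes 1$ via CK1, regular vertices are forced by CK2, and any remaining isolated vertex is fixed by the convention $\rho^u(v) := \rho(v)\otimes 1$ built into the construction above.

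For (a), it suffices to treat monomials $\alpha\beta^* \in (L_Q)_k$ with $|\alpha| = n$, $|\beta| = m$, $n - m = k$. Writing $\alpha = e_1 \cdots e_n$, multiplicativity gives $\rho^u(\alpha) = \prod_i (\rho(e_i) \otimes u) = \rho(\alpha) \otimes u^n$, and similarly $\rho^u(\beta^*) = \rho(\beta^*) \otimes u^{-m}$. Hence $\rho^u(\alpha\beta^*) = \rho(\alpha\beta^*) \otimes u^{n-m} = \rho(\alpha\beta^*) \otimes u^k$, and linearity on each homogeneous component finishes (a).

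For (b), the hypothesis $p \ne 2$ lets us invoke the Banach--Lamperti theorem (\cite{chris1}*{Theorem 6.9}) on the invertible isometry $u$, producing a spatial system $(T, Y, Y, h)$ for $u$. For each $e \in Q^1$ with spatial system $(S_e, X_{r(e)}, X_e, g_e)$, one verifies directly that $\rho(e) \otimes u$ is the spatial partial isometry on $L^p(X \times Y)$ whose spatial system is
\[
\bigl(S_e \times T,\ X_{r(e)} \times Y,\ X_e \times Y,\ (x,y)\mapsto g_e(x)h(y)\bigr),
\]
and that $\rho(e^*) \otimes u^{-1}$ is its reverse, carrying the inverse set transformation and the conjugated phase factor on the swapped supports. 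Combined with the spatial idempotents $\rho^u(v) = \rho(v) \otimes 1$ with support $X_v \times Y$, this yields spatiality of $\rho^u$. I expect the main obstacle here to be the clean bookkeeping of the tensor-product spatial system: it amounts to unpacking the definitions \eqref{spatsyst}--\eqref{spatiso} and checking that the product of spatial isometric isomorphisms computed pointwise on $L^p(X\times Y)$ realises the claimed system.

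For (c), set $E_m := X \times F_m$, so that $X \times Y = \coprod_{m \in \Z} E_m$. Then for any $e \in Q^1$,
\[
\rho^u(e)(L^p(E_m)) = (\rho(e) \otimes u)(L^p(X) \otimes L^p(F_m)) \subset L^p(X) \otimes L^p(F_{m+1}) = L^p(E_{m+1}),
\]
and symmetrically $\rho^u(e^*)(L^p(E_m)) \subset L^p(E_{m-1})$ using $u^{-1}(L^p(F_m)) = L^p(F_{m-1})$. This is exactly the freeness condition of Definition~\ref{def_free-approx.}(1).
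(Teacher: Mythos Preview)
Your proof is correct and follows precisely the route the paper indicates: the paper's own proof merely says to argue as in \cite{chris1}*{Lemma 8.2} using Lemma~\ref{lem_exist_repres} in place of Phillips' Lemmas~2.18--2.20, and your construction of $\rho^u$ via Lemma~\ref{lem_exist_repres} applied to $\tilde\rho=\rho\otimes 1$ with $u_v=\rho(v)\otimes u$ is exactly this. Your treatment of (a)--(c) fills in the details the paper leaves implicit; the only cosmetic point is that in (c) the identification $L^p(E_m)=L^p(X)\otimes L^p(F_m)$ should be read as the completed $L^p$-tensor product, which is how the paper uses $\otimes$ throughout.
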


\begin{proof}
The proof is analogous to that of \cite{chris1}*{Lemma 8.2} using Lemma \ref{lem_exist_repres} instead of \cite{chris1}*{Lemmas 2.18, 2.19 and 2.20}.
\end{proof}

\begin{proposition}\label{prop:ushift}
Let $p$, $X$, $Q$, and $\rho$ be as in Lemma \ref{lem:rho^u}. Let $u\in \mathcal{L}(\ell^p(\mathbb{Z}))$ be the shift operator, $(u(x))(m):=x(m-1)$ ($x\in \ell^p(\mathbb{Z})$). Let $\rho^u$ be as in Lemma \ref{lem:rho^u}. Then, for all $a\in L_Q$, we have $\|\rho^u(a)\|\geq\|\rho(a)\|$.
\end{proposition}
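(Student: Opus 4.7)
The plan is to exploit the amenability of $\Z$ via a Følner-type sequence of almost invariant unit vectors in $\ell^p(\Z)$, combined with the homogeneity property (a) from Lemma \ref{lem:rho^u}.

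First, I would decompose $a\in L_Q$ into its finitely many nonzero homogeneous components with respect to the $\Z$-grading \eqref{grading}: write $a=\sum_{k=-K}^{K}a_k$ with $a_k\in (L_Q)_k$. By part (a) of Lemma \ref{lem:rho^u}, $\rho^u(a)=\sum_{k=-K}^{K}\rho(a_k)\otimes u^k$ as operators on $L^p(X\times\Z)\cong L^p(X)\otimes_p\ell^p(\Z)$, under the standard identification for which $\|\xi\otimes\eta\|=\|\xi\|\cdot\|\eta\|$.

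Next, I would introduce the unit vectors
\[
\eta_N := N^{-1/p}\sum_{j=0}^{N-1}\delta_j\in\ell^p(\Z),\qquad N\ge 1,
\]
and observe that a direct computation on supports gives $\|u^k\eta_N-\eta_N\|_{p}^{p}=2|k|/N$ for $N>|k|$. Hence for each fixed $k$, $\|u^k\eta_N-\eta_N\|\to 0$ as $N\to\infty$. This is the Følner condition for $\Z$ in $\ell^p$.

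Fixing $\xi\in L^p(X)$ with $\|\xi\|=1$, I would then estimate
\[
\bigl\|\rho^u(a)(\xi\otimes\eta_N)-\rho(a)\xi\otimes\eta_N\bigr\|
=\Bigl\|\sum_{k=-K}^{K}\rho(a_k)\xi\otimes\bigl(u^k\eta_N-\eta_N\bigr)\Bigr\|
\le\sum_{k=-K}^{K}\|\rho(a_k)\xi\|\cdot\|u^k\eta_N-\eta_N\|,
\]
which tends to $0$ as $N\to\infty$ since the sum is finite. Consequently $\|\rho^u(a)(\xi\otimes\eta_N)\|\to\|\rho(a)\xi\|$. Since $\|\xi\otimes\eta_N\|=1$, the left-hand side is at most $\|\rho^u(a)\|$ for every $N$, so $\|\rho(a)\xi\|\le\|\rho^u(a)\|$. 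Taking the supremum over unit vectors $\xi$ yields the desired inequality.

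There is no substantial obstacle here beyond recognizing that the right tool is an $L^p$ Følner sequence for the shift; the argument is essentially a mean-ergodic/amenability trick adapted from Hilbert space. The only points requiring care are the identification of $L^p(X\times\Z)$ with the $p$-tensor product (so that elementary tensors have the product norm and orthogonal shifts commute cleanly with the estimate) and the observation that the sum $\sum_k$ is finite, which makes the termwise-to-zero convergence uniform enough.
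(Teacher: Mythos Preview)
Your argument is correct. The paper does not spell out a proof here but simply defers to \cite{chris1}*{Proposition 8.3}, and the F{\o}lner-vector trick you use---testing $\rho^u(a)$ on $\xi\otimes\eta_N$ with $\eta_N=N^{-1/p}\sum_{j=0}^{N-1}\delta_j$ and invoking the homogeneity property (a) of Lemma~\ref{lem:rho^u}---is precisely the standard amenability argument behind Phillips' result, so your write-up is essentially a fleshed-out version of the paper's intended proof.
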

\begin{proof}
The proof is analogous to that of \cite{chris1}*{Proposition 8.3}, using Lemma \ref{lem:rho^u} instead of \cite{chris1}*{Lemma 8.2}.
\end{proof}

\begin{lemma}\label{lem:disj}
Let $Q$ be a nonsingular countable graph such that $L_Q$ is simple. Let $X=(X,\cB,\mu)$ be a $\sigma$-finite measure space.
Let $\{X_v\}_{v\in Q^0}\subset\cB$ a family of sets of nonzero measure,  $\{X_e\}_{e\in Q^1}\subset\cB$  a disjoint family such that $X=\displaystyle{\coprod_{v\in Q^0}} X_v$ and $X_v=\displaystyle{\coprod_{\{e:s(e)=v\}}}X_e\ \ (\forall v\in Q^0)$, and $$S_e: (X_{r(e)},\mathcal{B}_{|_{X_{r(e)}}},\mu_{|_{X_{r(e)}}})\rightarrow (X_{e},\mathcal{B}_{|_{X_{e}}},\mu_{|_{X_{e}}})\qquad (e\in Q^1)$$ a bijective measurable set transformation. If $\alpha=\alpha_1\cdots\alpha_m$ is a path, write $S_\alpha=S_{\alpha_1}\circ\cdots\circ S_{\alpha_m}$. 
Then, for each $n\ge 0$ and each $v\in Q^0$ there is a set $E_v\in\cB_{|X_v}$ such that $\mu(E_v)\neq0$, and such that the family 
$$\{S_\alpha(E_v):r(\alpha)=v, |\alpha|\le n\}$$
is disjoint. 
\end{lemma}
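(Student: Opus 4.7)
The plan is to reduce the required disjointness of $\{S_\alpha(E_v):r(\alpha)=v,\ |\alpha|\le n\}$ to a condition on closed paths at $v$, and then to take $E_v=X_\delta$ for a carefully chosen finite path $\delta$ beginning at $v$. Iterating the given partition $X_v=\coprod_{s(e)=v}X_e$ (which holds at every vertex since $Q$ is nonsingular) yields the finer partition $X_v=\coprod_{s(\alpha)=v,\ |\alpha|=l}X_\alpha$ for every $l\ge 0$. For two distinct paths $\alpha,\beta$ with $r(\alpha)=r(\beta)=v$ and $|\alpha|\le|\beta|\le n$: if $\alpha$ is not a prefix of $\beta$, then $X_\alpha$ and $X_\beta$ lie in disjoint pieces of a common length partition, so $S_\alpha(E_v)\subset X_\alpha$ and $S_\beta(E_v)\subset X_\beta$ are automatically disjoint; otherwise $\beta=\alpha\gamma$ where $\gamma$ is a closed path at $v$ with $1\le|\gamma|\le n$, and the bijectivity of $S_\alpha$ gives $S_\alpha(E_v)\cap S_\beta(E_v)=S_\alpha(E_v\cap S_\gamma(E_v))$, so disjointness for this pair is equivalent to $\mu(E_v\cap S_\gamma(E_v))=0$. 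Since $Q$ is nonsingular and hence row-finite, the set $C_v$ of closed paths at $v$ of length in $[1,n]$ is finite, so it suffices to produce $E_v\subset X_v$ of positive measure with $\mu(E_v\cap S_\gamma(E_v))=0$ for every $\gamma\in C_v$.

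Taking $E_v=X_\delta$ for a finite path $\delta$ starting at $v$, the composition rule $S_{\gamma\delta}=S_\gamma\circ S_\delta$ gives $S_\gamma(X_\delta)=X_{\gamma\delta}$; refining both $X_\delta=\coprod_{s(\epsilon)=r(\delta),\ |\epsilon|=|\gamma|}X_{\delta\epsilon}$ and $X_{\gamma\delta}$ inside the length-$(|\gamma|+|\delta|)$ partition of $X_v$ shows that $X_\delta\cap X_{\gamma\delta}$ is nonzero in measure iff $\delta\epsilon=\gamma\delta$ for some such $\epsilon$, equivalently iff $\delta$ is an initial segment of the periodic infinite word $\gamma^\infty=\gamma\gamma\gamma\cdots$. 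Moreover $\mu(X_\delta)>0$ is automatic, because bijective measurable set transformations preserve the class of sets of positive measure and $\mu(X_v)\neq 0$ by hypothesis. So it suffices to find $\delta$ starting at $v$ that is not a prefix of $\gamma^\infty$ for any $\gamma\in C_v$.

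If $C_v=\emptyset$, take $E_v=X_v$. Otherwise $v$ lies on a cycle (any shortest closed path at $v$ of positive length is necessarily a cycle, by removing inner loops if present), and the simplicity of $L_Q$ implies Condition (L), so this cycle has an exit edge $f$ at some vertex $u$. For each $k\ge 0$, going around the cycle $k$ complete turns, reaching $u$, taking $f$, and then following any infinite path from $r(f)$ produces a distinct infinite path from $v$; hence there are infinitely many infinite paths from $v$, strictly more than the finite set $\{\gamma^\infty:\gamma\in C_v\}$. Pick any infinite path $\pi$ from $v$ different from every such $\gamma^\infty$; for each $\gamma\in C_v$ let $k_\gamma$ be the first position where $\pi$ and $\gamma^\infty$ disagree, set $m=\max_{\gamma\in C_v}k_\gamma$ (finite), and take $\delta=\pi_1\cdots\pi_m$. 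By construction $\delta$ is not a prefix of any $\gamma^\infty$, so $E_v=X_\delta$ satisfies all requirements. The main obstacle is precisely this appeal to Condition (L) to guarantee enough infinite paths from $v$; without exits from cycles, the only infinite path from a cyclic vertex could be periodic and coincide with some $\gamma^\infty$, which is why simplicity of $L_Q$ is essential.
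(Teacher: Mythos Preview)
Your proof is correct and follows the same overall architecture as the paper's: reduce the disjointness of $\{S_\alpha(E_v)\}$ to the single requirement $\mu(E_v\cap S_\gamma(E_v))=0$ for every closed path $\gamma$ at $v$ of length $\le n$, and then take $E_v=X_\delta$ for a suitable finite path $\delta$ from $v$. Your reduction is in fact slightly cleaner than the paper's, which treats the case $|\eta|=0$ separately and then reduces the general case to it, whereas you phrase the reduction uniformly via prefixes.

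The genuine difference lies in how $\delta$ is produced. The paper uses cofinality (another consequence of simplicity) to manufacture a second closed path $\beta$ at $v$ incomparable to a cycle $\alpha$, then builds the explicitly aperiodic word $\gamma=\alpha\beta\alpha\alpha\beta\beta\alpha\alpha\alpha\beta\beta\beta\cdots$ and takes $\delta=\gamma_1\cdots\gamma_{2n}$; the key fact is that no $\theta\theta$ is a prefix of $\gamma$. You instead give a counting argument: Condition~(L) plus nonsingularity yield infinitely many infinite paths from $v$ (loop around the cycle $k$ times, take an exit, then continue arbitrarily), while $\{\gamma^\infty:\gamma\in C_v\}$ is finite since $Q$ is row-finite, so some infinite path $\pi$ avoids all of them and a long enough prefix of $\pi$ works. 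Your route is more elementary in that it uses only Condition~(L) and the absence of sinks, never invoking cofinality; the paper's route is more constructive and yields an explicit $\delta$ of length $2n$.
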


\begin{proof} 
We shall use the fact that, because $L_Q$ is simple, $Q$ is cofinal, i.e. for every $v\in Q^0$ and each cycle $c$ there is a path starting at $v$ and ending at some vertex in $c$ (see \cite{libro_Pere}*{Theorem 2.9.7}). Let $v\in Q^0$. If $v\in Q^0$ is not in any cycle, we set $E_v=X_v$; observe that $\mu(E_v)\neq 0$ by hypothesis. Because $v$ is not in any cycle, any two distinct paths ending in $v$ are incomparable, and so $E_v$ satisfies the disjointness condition of the lemma. Next assume that $v$ belongs to a cycle. Let $\alpha:=\alpha_v$ be a cycle based at $v$ and let $\beta$ be a closed path with $s(\beta)=v$ that agrees with $\alpha$ up to an exit, goes out following the exit, returns to $c$ (which is possible by cofinality) and follows it till it gets back to $v$.
Consider the infinite path 
\[
\gamma:=\alpha\beta\alpha\alpha\beta\beta\alpha\alpha\alpha\beta\beta\beta\ldots
\] 
It is long, but straightforward to check that 
\begin{equation}\label{repe}
    \nexists\theta\in\cP(Q) \text{ such that } \theta\theta\geq\gamma.
\end{equation}
Let $n\in\mathbb{N}$ and $v\in E^0$. For $i\ge 1$, let $\gamma_i$ be the $i$-th edge of $\gamma$.  Put 
\[
\cB\owns E_v:=X_{\gamma_1\ldots\gamma_{2n}}.
\] 
Then $\mu(E_v)\neq0$ because $\mu(X_{w})\ne 0$ for all $w\in Q^0$. Let $\eta$ and $\tau$ be different paths such that $r(\eta)=r(\tau)=v$, of lengths $k$ and $l$ respectively ($k\le l\leq n$). We have to check that $S_{\eta}(E_v)$ and $S_{\tau}(E_v)$ are disjoint. If  $k=0$ this is clear from Remark \ref{rem:comparables}, because $S_{\tau}(E_v)=X_{\tau\gamma_1\ldots\gamma_{2n}}$ and the paths $\tau\gamma_1\ldots\gamma_{2n}$ and $\gamma_1\ldots\gamma_{2n}$ are incomparable, by \eqref{repe}. So assume that $0<k\leq l$; if $\eta$ and $\tau$ are incomparable, we are done. Otherwise, we  must have $\eta\geqslant\tau$; say $\tau=\eta\delta$. Hence $S_{\eta}(E_v)\cap S_{\tau}(E_v)=S_{\eta}(E_v\cap S_{\delta}(E_v))$ has measure zero because $E_v\cap S_{\delta}(E_v)$ does.
\end{proof}

 Let $(X,\mathcal{B},\mu)$ be a $\sigma$-finite measure space and $\tau_1,\ldots,\tau_n\in\mathcal{L}(L^p(X))$ spatial partial isometries with reverses $\sigma_1,\ldots,\sigma_n$. Call $\tau_1,\ldots,\tau_n$ \emph{orthogonal} if $\tau_j\sigma_i=\sigma_i\tau_j=0\ \forall i\neq j$.

\begin{lemma}\label{lem:orthogonal}
 Let $X$ be a $\sigma$-finite measure space, $p\in[1,\infty)$, $\tau_1,\ldots,\tau_n\in\caL(L^p(X))$ orthogonal spatial partial isometries,
$\lambda\in\C^n$, and $\tau_\lambda=\displaystyle{\sum_{i=1}^n}\lambda_i\tau_i$. Then $\|\tau_\lambda\|=\|\lambda\|_\infty$.
\end{lemma}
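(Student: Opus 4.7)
The plan is to reduce everything to the spatial factorization \(\tau_i=\iota_{B_i}u_i\pi_{A_i}\) of \eqref{spatiso}, where \(A_i,B_i\in\cB\) are the domain and range supports and \(u_i:L^p(A_i)\to L^p(B_i)\) is a spatial isometric isomorphism. The first step is to translate the orthogonality hypotheses into disjointness of supports. Indeed, \(\sigma_i\tau_j=\iota_{A_i}u_i^{-1}\pi_{B_i}\iota_{B_j}u_j\pi_{A_j}\), and since \(u_i^{-1}\) and \(u_j\) are isometric isomorphisms, vanishing of this composition is equivalent to \(\pi_{B_i}\iota_{B_j}=0\); by Example \ref{exa:idempotent} this forces \(\mu(B_i\cap B_j)=0\). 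A symmetric argument applied to \(\tau_j\sigma_i=0\) yields \(\mu(A_i\cap A_j)=0\). So \(\{A_i\}\) and \(\{B_i\}\) are each disjoint families in \(\cB\).

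With this in hand, I would compute the norm directly. For \(\xi\in L^p(X)\), each \(u_i\pi_{A_i}(\xi)\) lies in \(L^p(B_i)\), and the pairwise disjointness of the \(B_i\) makes \(L^p(\coprod_iB_i)\) an isometric \(L^p\)-direct sum, giving
\begin{equation*}
\|\tau_\lambda(\xi)\|^p=\bigl\|\textstyle\sum_i\lambda_i\iota_{B_i}u_i\pi_{A_i}(\xi)\bigr\|^p=\sum_i|\lambda_i|^p\|u_i\pi_{A_i}(\xi)\|^p=\sum_i|\lambda_i|^p\|\pi_{A_i}(\xi)\|^p,
\end{equation*}
where the last equality uses that each \(u_i\) is isometric. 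The disjointness of the \(A_i\) then gives \(\sum_i\|\pi_{A_i}(\xi)\|^p\le\|\xi\|^p\), yielding \(\|\tau_\lambda(\xi)\|\le\|\lambda\|_\infty\|\xi\|\) and hence \(\|\tau_\lambda\|\le\|\lambda\|_\infty\).

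For the reverse inequality, pick \(i_0\) with \(|\lambda_{i_0}|=\|\lambda\|_\infty\); we may assume \(\mu(A_{i_0})>0\) (otherwise \(\tau_{i_0}=0\) and that index contributes nothing, so one restricts attention to the subset of indices with nonzero support). Choose \(F\subseteq A_{i_0}\) with \(0<\mu(F)<\infty\) and set \(\xi=\chi_F\). By disjointness, \(\pi_{A_i}(\xi)=0\) for \(i\ne i_0\), so \(\tau_\lambda(\xi)=\lambda_{i_0}u_{i_0}(\xi)\), whence \(\|\tau_\lambda(\xi)\|=|\lambda_{i_0}|\|\xi\|=\|\lambda\|_\infty\|\xi\|\).

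There is no serious obstacle here; the only thing that needs care is the bookkeeping that converts the semigroup-level orthogonality relations \(\tau_j\sigma_i=\sigma_i\tau_j=0\) into honest measure-theoretic disjointness (modulo null sets, per the paper's convention), after which the computation is forced by the \(L^p\)-direct sum structure of \(L^p\bigl(\coprod_iA_i\bigr)\) and \(L^p\bigl(\coprod_iB_i\bigr)\).
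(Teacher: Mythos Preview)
Your argument is correct and is exactly the ``straightforward'' computation the paper declines to write out: translate the orthogonality relations into disjointness of the domain supports $A_i$ and range supports $B_i$, then use the resulting $L^p$-direct sum decompositions to bound $\|\tau_\lambda\|$ above and below by $\|\lambda\|_\infty$. One small caveat: the lemma (and hence your lower-bound step) tacitly assumes each $\tau_i\neq 0$, i.e.\ $\mu(A_i)>0$; your parenthetical about ``restricting to indices with nonzero support'' does not actually recover the stated equality when some $\tau_i$ vanish, but this is a defect of the lemma's statement rather than of your proof, and in the paper's applications the $\tau_i$ are always nonzero.
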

\begin{proof} Straightforward.
\end{proof}

\begin{prop}\label{prop:larga} Let $Q$ be a nonsingular countable graph without sources. Let $p\in [1,\infty)$, $p\ne 2$, and let $X$ and $Y$ be measure spaces and $\rho:L_Q\to \caL(L^p(X))$ and $\phi:L_Q\to \caL(L^p(Y))$ spatial representations. Assume that $L_Q$ is simple and that $\rho$ is approximately free.
Then $$\|\rho(a)\|\le\|\phi(a)\|\quad (a\in L_Q).$$
\end{prop}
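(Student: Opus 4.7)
The plan is to adapt \cite{chris1}*{Proposition 8.5} to the graph setting. First, using Lemma \ref{lem:nondegenerate}, I reduce to the case that both $\rho$ and $\phi$ are nondegenerate; restricting any approximate-freeness partition of $X$ to the subspace on which $\rho$ descends still yields an approximately free partition, so the hypothesis on $\rho$ is preserved. Next, applying Lemma \ref{lem:escr_sum}, write $a=\sum_{\alpha\in F}\sum_{\beta\in\cP_n}\lambda_{\alpha,\beta}\alpha\beta^*$ for some finite $F\subset\cP$ and $n\ge 0$, and set $D=\max\{|\alpha|:\alpha\in F\}$. We may also assume $\phi$ is nonzero (otherwise $\rho$ is too, by simplicity, and there is nothing to prove).

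Fix $\epsilon>0$. By approximate freeness of $\rho$, choose a partition $X=\coprod_{m=0}^{N-1}E_m$ with $N>D+n$ such that $\rho(e)$ shifts the index by $+1$ modulo $N$ for every edge $e$. Pick a unit vector $\xi\in L^p(X)$ with $\|\rho(a)\xi\|>\|\rho(a)\|-\epsilon$. Since $Q$ is nonsingular and $\rho$ is nondegenerate, \eqref{decompo:nondegreg} gives $X=\coprod_{\gamma\in\cP_n}X_\gamma$, and refining by the $\{E_m\}$-partition yields $\xi=\sum_{m,\gamma}\xi_{m,\gamma}$ with $\xi_{m,\gamma}\in L^p(E_m\cap X_\gamma)$. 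Each $\rho(\alpha\beta^*)$ is a spatial partial isometry from $L^p(X_\beta)$ to $L^p(X_\alpha)$ shifting indices by $|\alpha|-n$; since $N>D+n$, the different shift values are pairwise distinct modulo $N$, so $\|\rho(a)\xi\|^p$ decomposes as a sum over $\alpha\in F$ of $L^p$-norms of sums of the form $\sum_\beta \lambda_{\alpha,\beta}\rho(\alpha\beta^*)\xi_{\cdot,\beta}$, each term being a spatial transport of an appropriate $\xi_{m,\beta}$ modified only by the phase factor of $\rho(\alpha\beta^*)$.

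The final step is to construct, for $\phi$, disjoint measurable sets $Y_{m,\gamma}$ of positive measure inside the range of $\phi(\gamma\gamma^*)$, together with vectors $\eta_{m,\gamma}\in L^p(Y_{m,\gamma})$ reproducing the $\rho$-computation. The key tool is Lemma \ref{lem:disj}, which uses the cofinality of $Q$ (implied by simplicity of $L_Q$ via \cite{libro_Pere}*{Theorem 2.9.7}) to produce disjoint sets of positive measure inside the supports of $\phi(\gamma\gamma^*)$ indexed by paths of bounded length, so that iterated spatial transports under $\phi$ remain disjoint. Phase-factor discrepancies between $\rho(\alpha\beta^*)$ and $\phi(\alpha\beta^*)$ are absorbed by replacing $\phi$ with a modified representation $\phi_u$ as in Lemma \ref{lem_exist_repres}, adjusting the $\eta_{m,\gamma}$ by unimodular multipliers so that the resulting sums match the $\rho$-computation term-by-term. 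This gives $\|\rho(a)\xi\|\le\|\phi(a)\eta\|$ for $\eta:=\sum_{m,\gamma}\eta_{m,\gamma}$ of norm at most $1$, whence $\|\rho(a)\|\le\|\phi(a)\|$ upon letting $\epsilon\to 0$. The main obstacle is precisely this transfer step: matching the phase factors and the cardinalities of the supporting sets between $\rho$ and $\phi$ with sufficient precision to produce a norm inequality rather than merely a multiplicative estimate; this is where simplicity (through Lemma \ref{lem:disj}) and the flexibility provided by Lemma \ref{lem_exist_repres} are essential.
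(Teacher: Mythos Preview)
Your overall strategy parallels the paper's, but the transfer step you flag as the ``main obstacle'' is a genuine gap that your sketch does not close. For fixed $\alpha\in F$ and fixed $m$, the images $\rho(\alpha\beta^*)\xi_{m,\beta}$ as $\beta$ ranges over $\cP_n$ all land in the \emph{same} piece $L^p(X_\alpha\cap E_{m+|\alpha|-n})$, so the norm of $\sum_\beta\lambda_{\alpha,\beta}\rho(\alpha\beta^*)\xi_{m,\beta}$ depends on the pointwise profiles of the functions $\xi_{m,\beta}$, not merely on their individual norms. There is no way to choose $\eta_{m,\gamma}\in L^p(Y)$ reproducing these profiles, since $X$ and $Y$ are unrelated measure spaces; adjusting $\phi$ by unimodular multipliers via Lemma~\ref{lem_exist_repres} cannot help, because phase factors are functions on $Y$ and carry no information about $\xi$.

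The paper circumvents the transfer entirely by passing to $L^p(X\times Y)$: one builds an \emph{isometry} $u:L^p(X)\to L^p(X\times Y)$, $u\eta=\sum_{\gamma\in W}\eta_\gamma\otimes\phi(\gamma)\zeta_{r(\gamma)}$ (with $\zeta_v\in L^p(E_v)$ unit vectors, $E_v$ from Lemma~\ref{lem:disj}), and verifies the intertwining identity $u\rho(b)\xi=(1\otimes\phi)(b)\,u\xi$; since $\|(1\otimes\phi)(b)\|=\|\phi(b)\|$, the inequality follows. Two further ingredients are missing from your outline. First, the hypothesis $\sour(Q)=\emptyset$ is used explicitly: one chooses $\tau_v\in\cP_{N_0}$ with $r(\tau_v)=v$ for each $v\in s(F_0)$, sets $x=\sum_v\tau_v$ and $b=xa$, so that every $\alpha$ in the expression for $b$ has $|\alpha|\ge N_0$; without this the intertwining relation fails. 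Second, your bound $N>D+n$ is far too small: one needs $N\ge\mathfrak{j}(N_0+N_1)$ with $\mathfrak{j}>(N_0+N_1)(2/\epsilon)^p$ so that, after a cyclic permutation of the $D_m$ as in \cite{chris1}, one may take $\xi$ supported on indices $N_0\le m\le N-N_1-1$, killing the wrap-around terms inherent in an approximately free (as opposed to free) partition.
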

\begin{proof} This proposition generalizes \cite{chris1}*{Proposition 8.6}; we shall adapt the argument therein using Lemma \ref{lem:disj} instead of \cite{chris1}*{Lemma 8.5}.
Let $X'=\coprod_{v\in Q^0}X_v$; observe that the corestriction $\rho'$ of $\rho$ to $\caL(L^p(X'))$ is approximately free. Hence by Lemma \ref{lem:nondegenerate} we may assume that $\rho$ and $\phi$ are both nondegenerate. For each $\alpha\in \cP=\cP(Q)$, let $R_\alpha$ and $S_\alpha$ be the bijective measurable set transformations $X_{r(\alpha)}\to X_\alpha$, $Y_{r(\alpha)}\to Y_\alpha$ associated to $\rho(\alpha)$ and $\phi(\alpha)$, as in Remark \ref{rem:comparables}. We have to show that if $a\in L_Q$ is such that $\|\rho(a)\|=1$, then $\|\phi(a)\|\ge 1$. By Lemma \ref{lem:escr_sum}, there are $N_0\ge 0$, a finite set $F_0\subset\cP$ and a finitely supported function $\lambda^0:F_0\times \cP_{N_0}\to \C$
such that
\[
a=\sum_{\alpha\in F_0}\sum_{\beta\in\cP_{N_0}}\lambda^0_{\alpha,\beta}\alpha\beta^*
\]
Because $\sour(Q)=\emptyset$ by hypothesis, for each $v\in s(F_0)$ we may choose a path $\tau_v\in \cP_{N_0}$ with $r(\tau_v)=v$. Put 
\[
x=\sum_{v\in s(F_0)}\tau_{v},\quad b=xa.
\]
Because every path in the set $\tau_{F_0}=\{\tau_v:v\in s(F_0)\}$ is of length $N_0$, any two of them are incomparable. Hence by Remark \ref{rem:comparables}, the elements of $\rho(\tau_{F_0})$ are orthogonal spatial partial isometries. Therefore $\|\rho(x)\|=1$,
by Lemma \ref{lem:orthogonal}; similarly, $\|\rho(x^*)\|=1$. Hence 
$\|\rho(b)\|=\|\rho(a)\|=1$ and by the same argument, $\|\phi(b)\|=\|\phi(a)\|$. Therefore it suffices to show that for every $\epsilon>0$,
\begin{equation}\label{toshow}
\|\phi(b)\|>1-\epsilon.
\end{equation}

For $\beta\in\cP_{N_0}$ and $\alpha\in F_0$, let 
\[
\lambda_{\tau_{s(\alpha)}\alpha,\beta}=\lambda^0_{\alpha,\beta}.
\] 
Put $F=\{\tau_{s(\alpha)}\alpha:\alpha\in F_0\}$; the map $F_0\to F$, $\alpha\mapsto \tau_{s(\alpha)}\alpha$ is clearly surjective. Moreover, because $\tau_v\in \cP_{N_0}$ for all $v\in s(F_0)$, it is also injective. Using this in the third step, we obtain 

\begin{align*}
b=&(\sum_{v\in s(F_0)}\tau_v)(\sum_{\alpha\in F_0}\sum_{\beta\in\cP_{N_0}}\lambda^0_{\alpha,\beta}\alpha\beta^*)\\
=&\sum_{v\in s(F_0)}\sum_{\underset{s(\alpha)=v}{\alpha\in F_0}}\sum_{\beta\in\cP_{N_0}}\lambda_{\tau_v\alpha,\beta}\alpha\beta^*\\
=&\sum_{\alpha\in F}\sum_{\beta\in\cP_{N_0}}\lambda_{\alpha,\beta}\alpha\beta^*
\end{align*}

Let $N_1=\max\{|\alpha|:\alpha\in F_0\}$; then $N_0\le |\alpha|\le N_0+N_1$ for all $\alpha\in F$. If $N_0=N_1=0$, then $b$ is a linear combination of vertices, $b=\sum_v\lambda_v v$, whence by Lemma \ref{lem:orthogonal} we have
\[
\|\phi(b)\|=\|\lambda\|_\infty=\|\rho(b)\|=1.
\]
Hence \eqref{toshow} holds in this case. So we may assume $N_0+N_1>0$, and take $\ru>(N_0+N_1)(2/\epsilon)^p$. By our hypothesis on $\rho$, there are $N\ge \ru(N_0+N_1)$ and a partition 
\begin{equation}\label{decompo:aprofree}
X=\coprod_{n=0}^{N-1}D_n
\end{equation} 
such that for the remainder $\bar{n}$ of $n$ modulo $N$, we have $\rho(e)(L^p(D_{\bar{n}}))\subset L^p(D_{\ol{n+1}})$ and $\rho(e^*)(L^p(D_{\bar{n}}))\subset L^p(D_{\ol{n-1}})$. By the argument of  \cite{chris1}*{pages 54--55}, upon cyclic permutation of the $D_n$ if necessary, there exists 
\[
\xi=\sum_{m=0}^{N-1}\xi_m\in\bigoplus_{m=0}^{N-1}L^p(D_m)=L^p(X)
\]
with $\xi_m=0$ for $m\le N_0-1$ and for $m\ge N-N_1$, and such that $\|\xi\|\le 1$ and $\|\rho(b)\xi\|>1-\epsilon$. For each $\gamma\in\cP$, put 
$$
D_\gamma=R_\gamma(X_{r(\gamma)}\cap D_0)=D_{|\gamma|}\cap X_\gamma.
$$
Because $Q$ is nonsingular by hypothesis, and because we have assumed that $\rho$ is nondegenerate, for each $l\ge 0$ we have a decomposition \eqref{decompo:nondegreg}. It follows from this that
\begin{equation}\label{decompo:dmdgamma}
D_m=\coprod_{|\gamma|=m}D_\gamma\qquad (0\le m\le N-1).
\end{equation}
Let 
\[
W=\cP_{\le N-1}=\coprod_{0\le l\le N-1}\cP_l.
\]
It follows from \eqref{decompo:aprofree} and \eqref{decompo:dmdgamma} that
\[
X=\coprod_{\gamma\in W}D_\gamma.
\]
Hence we can write any $\eta\in L^p(X)$ as a sum
\[
\eta=\sum_{\gamma\in W}\eta_\gamma\qquad (\eta_\gamma\in L^p(D_\gamma)). 
\]
Next, by Lemma \ref{lem:disj}, for each $v\in Q^0$ there is a measurable set $E_v\subset Y$ of nonzero measure such that the family $\{S_\gamma(E_{r(\gamma)}):\gamma\in W\}$ is disjoint. Choose a norm-one element $\zeta_v\in L^p(E_v)$ for each $v\in Q^0$. Let
\[
u:L^p(X)\to L^p(X\times Y),\qquad u\eta=\sum_{\gamma\in W}\rho(\gamma)\eta_\gamma\otimes\phi(\gamma)\zeta_{r(\gamma)}.
\]
One checks, as in the proof of \cite{chris1}*{Proposition 8.6}, that $u$ is an isometry. Let $\psi=1\otimes \phi:L_Q\to \caL(L^p(X\times Y))$, 
be as in Lemma \ref{lem:rho^u}. Observe that 
\begin{equation}\label{phi=psi}
\|\psi(b)\|=\|\phi(b)\|.
\end{equation}
A calculation similar to that of the proof of \cite{chris1}*{Proposition 8.6} shows that for $\xi$ as above, 
\begin{equation}\label{intertwine}
u\rho(b)\xi=\psi(b)u\xi. 
\end{equation}
It follows from \eqref{phi=psi} and \eqref{intertwine} that \eqref{toshow} holds. This completes the proof.
\end{proof}

\noindent{\it Proof of Theorem \ref{thm:indep}} Because $L_Q$ is simple by hypothesis, the $C^*$-algebra $C^*(Q)$ is simple; thus every nonzero $*$-representation $L_Q\to \caL(L^2(X))$ induces the same norm. But by Remark \ref{rem:spat*} every spatial representation is a $*$-representation, so the theorem is clear for $p=2$. Assume $p\ne 2$. By Proposition \ref{prop:seminorms} and Corollary \ref{coro:seminorms}, we may assume that 
$Q$ is nonsingular and has no sources. By Lemma \ref{lem:rho^u} and Propositions \ref{prop:ushift} and \ref{prop:larga}, every spatial seminorm is associated to a free spatial representation. Applying Proposition \ref{prop:larga} again, we get that any two nonzero approximately free spatial representations lead to the same seminorm.\qed

\section{A simplicity theorem}\label{sec:simplicity}

\begin{thm}\label{thm:simple}
Let $p\in [1,\infty)$, $p\ne 2$. The following are equivalent for a countable graph $Q$.
\item[i)] $L_Q$ is simple. 
\item[ii)] Every spatial nonzero $L^p$-representation of $L_Q$ is injective.
\item[ii')] Every spatial nonzero representation $L_Q\to\caL(\ell^p(\N))$ is injective.
\item[ii")] Every nondegenerate spatial nonzero representation $L_Q\to\caL(\ell^p(\N))$ is injective.
\item[iii)] Every nondegenerate, contractive, nonzero $L^p$-representation of $\cO^p(Q)$ is injective.
\item[iii')] Every nondegenerate, contractive, nonzero representation $\cO^p(Q)\to\caL(\ell^p(\N))$ is injective. 
\goodbreak

If in addition we assume either that $p\ne 1$ or that $Q^0$ is finite, then the above conditions are also equivalent to the following.

\item[iv)] Every nonzero contractive homomorphism from $\cO^p(Q)$ to another $L^p$-operator algebra is injective.
\end{thm}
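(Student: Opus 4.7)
The plan is to prove i) $\Rightarrow$ ii) $\Rightarrow$ ii') $\Rightarrow$ ii'') $\Rightarrow$ i) first, then to link these to iii) and iii') through Theorems \ref{thm:contraspat15} and \ref{thm:indep}, and finally to incorporate iv) under the extra hypothesis by invoking Theorem \ref{thm:contraspat2}. The implications i) $\Rightarrow$ ii) $\Rightarrow$ ii') $\Rightarrow$ ii'') are essentially formal: the kernel of any algebra representation is a two-sided ideal, so simplicity forces it to be zero whenever the representation is nonzero, and the class of representations shrinks at each successive step.

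The substantive step is ii'') $\Rightarrow$ i), which I would establish by contraposition. Assuming $L_Q$ is not simple, the classification of two-sided ideals of Leavitt path algebras of countable graphs provides a nonzero proper ideal $I\subset L_Q$ whose quotient $L_Q/I$ is itself the Leavitt path algebra $L_{Q'}$ of a nonempty countable graph $Q'$. Applying Proposition \ref{prop:sigmarep} to $Q'$ yields an injective nondegenerate spatial representation $\sigma\colon L_{Q'}\to\caL(\ell^p(\N))$; composing with the quotient $L_Q\twoheadrightarrow L_{Q'}$ produces a nondegenerate spatial nonzero representation of $L_Q$ on $\ell^p(\N)$ whose kernel contains $I\ne 0$, contradicting ii'').

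For ii) $\Rightarrow$ iii), let $\hat\rho\colon\cO^p(Q)\to\caL(L^p(X))$ be nondegenerate, contractive and nonzero, and set $\rho:=\hat\rho|_{L_Q}$. By Theorem \ref{thm:contraspat15}, $\rho$ is nondegenerate and spatial, and it is nonzero because $L_Q$ is dense in $\cO^p(Q)$. Since ii) forces i), Theorem \ref{thm:indep} produces an isometric isomorphism $\cO^p(Q)\iso\overline{\rho(L_Q)}$, which is precisely the corestriction of $\hat\rho$; hence $\hat\rho$ is isometric and in particular injective. The implication iii) $\Rightarrow$ iii') is formal, and iii') $\Rightarrow$ ii'') follows since any nondegenerate spatial nonzero $\rho\colon L_Q\to\caL(\ell^p(\N))$ extends by the universal property of $\cO^p(Q)$ to a nondegenerate contractive nonzero $\hat\rho\colon\cO^p(Q)\to\caL(\ell^p(\N))$, whose injectivity granted by iii') forces that of $\rho$ since $L_Q\hookrightarrow\cO^p(Q)$ is an embedding.

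Under the extra assumption $p\ne 1$ or $Q^0$ finite, iv) $\Rightarrow$ iii) is automatic since every $\caL(L^p(X))$ is itself an $L^p$-operator algebra. For iii) $\Rightarrow$ iv), given a nonzero contractive $f\colon\cO^p(Q)\to B$, realize $B\subset\caL(L^p(X))$ isometrically. Theorem \ref{thm:contraspat2} applied to $f$ produces $Y$, $\iota$, $\pi$ with $\pi\iota=1_{L^p(Y)}$, and a nondegenerate spatial $\rho'\colon L_Q\to\caL(L^p(Y))$ such that $f(a)=\iota\rho'(a)\pi$ on $L_Q$; extending $\rho'$ to a nondegenerate contractive $\hat\rho'\colon\cO^p(Q)\to\caL(L^p(Y))$, the same formula persists on all of $\cO^p(Q)$ by density and continuity. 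Then $f(a)=0$ forces $\hat\rho'(a)=\pi\iota\hat\rho'(a)\pi\iota=0$; since $\hat\rho'$ is nonzero (as $f$ is) and iii) applies, $a=0$. The main obstacle is the step ii'') $\Rightarrow$ i): making rigorous the reduction from a nonsimple $L_Q$ to a proper quotient isomorphic to a Leavitt path algebra of a nonempty countable graph will require a careful appeal to the hereditary-saturated-subset/breaking-vertex classification of graded ideals together with a separate treatment of the non-graded case arising from cycles without exits.
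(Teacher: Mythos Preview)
Your overall scheme coincides with the paper's almost implication by implication, and the arguments you give for the chains i) $\Rightarrow$ ii) $\Rightarrow$ ii') $\Rightarrow$ ii''), for linking ii), ii''), iii), iii') via Theorems~\ref{thm:contraspat15} and~\ref{thm:indep}, and for iii) $\Leftrightarrow$ iv) under the extra hypothesis are all correct and essentially the paper's (you in fact spell out iii) $\Leftrightarrow$ iv) more carefully than the paper, which simply invokes Theorem~\ref{thm:contraspat2}).

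The gap you flag in ii'') $\Rightarrow$ i) is real, and your proposed fix --- finding a proper ideal $I$ with $L_Q/I\cong L_{Q'}$ --- does not go through in the case that matters. When non-simplicity is witnessed by a proper hereditary saturated $H$, the paper does exactly what you suggest: take $Q'=Q/H$, note the quotient map sends vertices and edges to vertices and edges (or zero), and compose with the representation of Proposition~\ref{prop:sigmarep}; spatiality of the composite is then automatic. But the remaining case is $Q$ cofinal with a cycle $c$ without exits, and here there is \emph{no} proper graded ideal at all, so no hereditary-saturated or breaking-vertex quotient is available; the non-graded quotients of $L_Q$ are not Leavitt path algebras in any way that hands you spatiality of the composite for free. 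The paper does not try to force this case into the quotient framework. Instead it argues that cofinality together with a cycle without exits forces $Q$ to be regular, so the boundary space $\fX$ of Example~\ref{ex:tight} consists only of infinite paths, all of the form $\alpha c^\infty$ with $r(\alpha)=s(c)$, hence is countable. The associated representation $\rho_\mu$ (counting measure) is then a nondegenerate spatial nonzero representation on an $\ell^p$-space of a countable set which annihilates $c-c^2$, since $c$ fixes $c^\infty$. This explicit dynamical construction \emph{is} the ``separate treatment'' you anticipated; it replaces the quotient argument rather than refining it.
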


\begin{proof} If either $p\ne 1$ or $Q^0$ is finite, then iii) and iv) are equivalent, by Theorem \ref{thm:contraspat2}. Let $2\ne p\in [1,\infty)$. It follows from Theorems \ref{thm:contraspat15} and \ref{thm:indep} that i)$\Rightarrow$iii). By Lemma \ref{lem:nondegenerate} and Theorem \ref{thm:contraspat15}, iii)$\Rightarrow$ii). Similarly, iii')$\Rightarrow$ii"). It is clear that ii)$\Rightarrow$ii')$\Rightarrow$ii") and that iii)$\Rightarrow$iii'). It remains to show that ii")$\Rightarrow$i). By \cite{libro_Pere}*{Theorem 2.9.1}, $L_Q$ is simple if and only if $Q^0$ is the only nonempty hereditary and saturated subset of vertices, and every cycle in $Q$ has an exit. We shall show that if any of these two conditions does not hold, then ii") does not hold either. So suppose there is a proper hereditary and saturated subset $H\subset Q^0$. Let $Q/H$ be the quotient graph of \cite{libro_Pere}*{Definition 2.4.11}. Then the natural map $\pi:L_Q\to L_{Q/H}$ is a nonzero surjection with nonzero kernel the ideal $I(H)$ generated by $H$. Hence if $\rho$ is an injective nondegenerate spatial representation $L_{Q/H}\to \caL(\ell^p(\N))$ (which exists by Proposition \ref{prop:sigmarep}) then $\rho\pi$ is a nondegenerate nonzero spatial representation $L_{Q}\to \caL(\ell^p(\N))$ which is not injective. So assume that $Q^0$ is the only nonempty saturated and hereditary set of vertices, or equivalently, by 
\cite{libro_Pere}*{Lemma 2.9.6}, that $Q$ is cofinal in the sense of \cite{libro_Pere}*{Definitions 2.9.4} and that it has a cycle $c$ without exits. Cofinality implies that $c$ is the only cycle of $Q$ modulo cycle rotation (by \cite{libro_Pere}*{Lemma 2.7.1 and Theorem 2.7.3}), and that $\sink(Q)=\emptyset$ (by \cite{libro_Pere}*{Lemma 2.9.5}). Moreover, $Q$ cannot have any infinite emmitters. For suppose
$v\in \infem(Q)$; then $v$ cannot be in any cycle, since any cycle containing $v$ would have exits. In particular if $e\in Q^1$ and $s(e)=v$ then $r(e)\ne v$
and by \cite{libro_Pere}*{Lemma 2.0.7} the hereditary and saturated closure of $\{r(e)\}$ does not contain $v$, a contradiction. Hence $Q=\reg(Q)$, and therefore the space $\fX$ of \eqref{fX} consists of the infinite paths of $Q$. If $s(c)=w$, then any such path is of the form $\alpha c^\infty$ for some finite path $\alpha\in\cP$ with
$r(\alpha)=w$. In particular $\fX$ is countable and $\fX_w=\fX_{c^n}=\{c^\infty\}$ for all $n\ge 1$. Hence for the counting measure $\mu$, there is a spatial isometric isomorphism $L^p(\fX,\mu)\cong\ell^p(\N)$, and the nondegenerate representation $\rho_\mu$ of Example \ref{ex:tight} maps $c-c^2$ to zero, so it is not injective. This concludes the proof.
\end{proof}

\begin{rem} By \cite{gardethiel1}, an $L^p$-operator algebra may admit Banach algebra quotients which are not again $L^p$-operator algebras. Thus Phillips' theorem that the $L^p$-Cuntz algebra $\cO^p_d$ is simple as a Banach algebra for $2\le d<\infty$ (\cite{chris3}*{Theorem 5.14}) does not follow from Theorem \ref{thm:simple} above. 
\end{rem}

\section{\(\cO^p(Q)\) vs. \(\cO^{p'}(Q)\)}\label{sec:opoq}
Let $\cR_n$ be the countable graph with exactly one vertex and $n$ loops, $1\le n\le \infty$. We write $L_n=L(\cR_n)$, $\cO^p_n=\cO^p(\cR_n)$. 
In particular, 
\[
L_\infty=\C\{x_i,x_i^*:1\le i\}/\langle x^*_ix_j-\delta_{i,j}\rangle.
\]
\begin{lem}\label{lem:linfty}
Let $Q$ be a countable graph and let $1\le p<\infty$. Assume that $L_Q$ is purely infinite simple. Then there is a homomorphism $L_\infty\to L_Q$ which induces an isometry $\cO^p_\infty\to \cO^p(Q)$. 
\end{lem}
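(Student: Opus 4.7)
The plan is to realize $L_\infty$ inside $L_Q$ as the subalgebra generated by a family of pairwise incomparable closed paths at a single vertex, and then to reduce the isometry statement to the uniqueness theorem, Theorem \ref{thm:indep}, applied to both $L_Q$ and $L_\infty$.

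First I would construct the map $\phi\colon L_\infty\to L_Q$ explicitly. Since $L_Q$ is purely infinite simple, $Q$ contains a cycle $c=c_1\cdots c_n$ based at some vertex $v$, and simplicity forces $c$ to have an exit, so there exist $1\le k\le n$ and $f\in Q^1$ with $f\ne c_k$ and $s(f)=s(c_k)$. Cofinality (a consequence of the simplicity of $L_Q$, applied to the infinite path $c^\infty$) furnishes a finite path $\gamma$ from $r(f)$ to $v$. Put $d:=c_1\cdots c_{k-1}f\gamma$, a closed path at $v$, and define
\[
\alpha_i:=c^{i-1}d\qquad (i\ge 1).
\]
For $i<j$ the paths $\alpha_i$ and $\alpha_j$ both begin with $c^{i-1}c_1\cdots c_{k-1}$ but have, respectively, $f$ and $c_k$ at position $(i-1)n+k$; hence they are pairwise incomparable closed paths at $v$, and iterated use of (CK1) yields $\alpha_i^*\alpha_j=\delta_{i,j}v$ in $L_Q$. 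The defining relations of $L_\infty$ are then satisfied by $v_\infty\mapsto v$, $x_i\mapsto\alpha_i$, $x_i^*\mapsto\alpha_i^*$, giving a well-defined algebra homomorphism $\phi\colon L_\infty\to L_Q$.

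Next I would invoke simplicity of both algebras. $L_Q$ is simple by hypothesis, and $L_\infty$ is simple because the graph $\cR_\infty$ has only $\emptyset$ and $\{v_\infty\}$ as hereditary saturated subsets, and every loop at $v_\infty$ has all the other loops as exits. By Theorem \ref{thm:indep}, each of $L_Q$ and $L_\infty$ carries a unique nonzero spatial $p$-seminorm. Choose any injective nondegenerate spatial representation $\rho\colon L_Q\to\caL(L^p(X))$ (which exists by Proposition \ref{prop:sigmarep}). Since each $\alpha_i$ is a path, Remark \ref{rem:comparables} guarantees that $\rho(\alpha_i)$ is a spatial partial isometry with reverse $\rho(\alpha_i^*)$, while $\rho(v)$ is a spatial idempotent. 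Therefore the composition $\rho\circ\phi\colon L_\infty\to\caL(L^p(X))$ is again a spatial representation, and it is nonzero because $\rho$ is injective and $v\ne 0$.

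Finally, applying Theorem \ref{thm:indep} first to the nonzero spatial representation $\rho\circ\phi$ of $L_\infty$, and then to $\rho$ on $L_Q$, we obtain, for every $a\in L_\infty$,
\[
\|a\|_{\cO^p_\infty}=\|\rho(\phi(a))\|=\|\phi(a)\|_{\cO^p(Q)}.
\]
Thus $\phi$ is isometric on the dense subalgebras and extends by continuity to an isometric homomorphism $\cO^p_\infty\to\cO^p(Q)$, as required. The main obstacle is the explicit construction in the first step, where one must combine the three structural consequences of purely infinite simplicity (existence of a cycle, exits of that cycle, and cofinality) to produce infinitely many pairwise incomparable closed paths at a single vertex; once that is done, the fact that $\phi$ is built out of paths makes the spatial criterion free, and Theorem \ref{thm:indep} closes the argument.
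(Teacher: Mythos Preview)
Your proof is correct and follows essentially the same approach as the paper: the paper also chooses a cycle $\alpha$ at $v$ together with an incomparable closed path $\beta$ at $v$ (constructed exactly as your $d$, via an exit and cofinality, referring back to Lemma~\ref{lem:disj}), defines $\phi(x_i)=\beta^i\alpha$, observes that composing any spatial representation of $L_Q$ with $\phi$ gives a spatial representation of $L_\infty$, and then invokes Theorem~\ref{thm:indep} for both algebras to conclude the isometry. Your $\alpha_i=c^{i-1}d$ is the same construction up to relabeling and an index shift, and your explicit verification of pairwise incomparability and of the spatiality of $\rho\circ\phi$ just spells out what the paper leaves implicit.
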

\begin{proof}
Let $\alpha$ be a cycle in $Q$ and let $v=s(\alpha)$. Choose a closed path $\beta$ with $s(\beta)=v$ so that $\alpha$ and $\beta$ are not comparable under the preorder of paths, as in the proof of Lemma \ref{lem:disj}. Then $\beta^*\alpha=\alpha^*\beta=0$ and, of course, $\alpha^*\alpha=\beta^*\beta=v$. Hence there is a $*$-homomorphism $\phi:L_\infty\to L_Q$ such that $\phi(x_i)=\beta^i\alpha$. Observe that if $\rho:L_Q\to\caL(L^p(X))$ is any spatial representation, then $\rho\phi$ is again spatial. Hence $\phi$ induces a contractive homomorphism $\hat{\phi}:\cO^p_\infty\to\cO^p(Q)$. By Theorem \ref{thm:indep}, if $\rho:L_Q\to\caL(L^p(X))$ is a nonzero spatial representation, then $\hat{\phi}$ agrees, up to isometric isomorphism, with the isometric inclusion $\overline{\rho\phi(L_\infty)}\subset\overline{\rho(L_Q)}$.
\end{proof}
\begin{thm}\label{thm:opoq}
Let $Q,Q'$ be countable graphs and let $1\le p\ne p'<\infty$ . Assume that $L_Q$ is purely infinite simple. If in addition, any of the following conditions holds, then there is no nonzero continuous homomorphism $\cO^p(Q)\to \cO^{p'}(Q')$. 
\item[i)] $L_{Q'}$ is simple. 
\item[ii)] $p'\le 2$  and $p\notin (p',2]$.
\item[iii)] $p'>2\ne p$. 
\end{thm}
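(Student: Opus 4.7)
The natural strategy is to reduce to Phillips' non-existence result for $L^p$-Cuntz algebras via the embedding provided by Lemma \ref{lem:linfty}, and then leverage the $L^{p'}$-operator-algebra structure of the target $\cO^{p'}(Q')$.

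Suppose for contradiction that $\psi:\cO^p(Q)\to\cO^{p'}(Q')$ is a nonzero continuous homomorphism. Since $L_Q$ is dense in $\cO^p(Q)$ and $\psi$ is continuous, the restriction $\psi|_{L_Q}$ is nonzero, and is therefore injective by the simplicity of $L_Q$. Now apply Lemma \ref{lem:linfty}, which uses the purely infinite simplicity of $L_Q$ to produce an injection $L_\infty\hookrightarrow L_Q$ inducing an isometric embedding $\iota:\cO^p_\infty\hookrightarrow\cO^p(Q)$. Set $\phi:=\psi\circ\iota$. Then $\phi|_{L_\infty}$ is a composition of two injective algebra homomorphisms, hence injective, so $\phi:\cO^p_\infty\to\cO^{p'}(Q')$ is a nonzero continuous homomorphism.

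Next, by the very definition of $\cO^{p'}(Q')$ and the realization of the defining norm by a single nondegenerate spatial representation as in \eqref{map:rhogood}, there is an isometric embedding $j:\cO^{p'}(Q')\hookrightarrow\caL(L^{p'}(Y))$ for some $\sigma$-finite measure space $Y$. Composing, $j\circ\phi:\cO^p_\infty\to\caL(L^{p'}(Y))$ is a nonzero continuous homomorphism. In cases (ii) and (iii) the conditions on $p$ and $p'$ are precisely those of Phillips' Theorem 9.2 of \cite{chris1}, which rules out the existence of any nonzero continuous homomorphism from $\cO^p_\infty$ (equivalently, by restriction to the subalgebra generated by any $d\ge 2$ of the canonical isometries, from $\cO^p_d$) into $\caL(L^{p'}(Y))$. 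This is the desired contradiction in cases (ii) and (iii).

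For case (i), the conditions on $p,p'$ may lie outside Phillips' regime, but the simplicity of $L_{Q'}$ supplies extra structure. Concretely, simple Leavitt path algebras satisfy a dichotomy: either $L_{Q'}$ is a (possibly infinite) matricial algebra, or $L_{Q'}$ is purely infinite simple. In the matricial case, one checks that $\cO^{p'}(Q')$ has no room for an injective continuous image of $L_\infty$ (because any such image would yield a representation of the simple infinite-dimensional algebra $L_\infty$ inside a completion of a direct sum of $M^{p'}_n$'s, which has no room for the infinite orthogonal family of nonzero idempotents $\phi(x_i x_i^*)$ of a common nonzero summand). In the purely infinite simple case, Lemma \ref{lem:linfty} applied to $Q'$ together with Theorem \ref{thm:indep} identifies $\cO^{p'}(Q')$ isometrically with $\overline{\rho(L_{Q'})}$ for any nonzero spatial representation $\rho$, so that the embedding $j$ is genuinely \emph{faithful} as an $L^{p'}$-operator algebra, and one obtains the contradiction by invoking Phillips' Theorem 9.2 together with the injectivity guaranteed by simplicity of $\cO^{p'}(Q')$ as an $L^{p'}$-operator algebra (Theorem \ref{thm:simpleintro}(iv)).

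The main obstacle is verifying that Phillips' Theorem 9.2, which is stated for finite Cuntz algebras $\cO^p_d$, applies either directly to $\cO^p_\infty$ or via a clean restriction to a suitable finite-Cuntz-like subalgebra of $\cO^p_\infty$; and in case (i), cleanly disposing of the matricial sub-case of the Leavitt dichotomy so that Lemma \ref{lem:linfty} applied to $Q'$ can be used to bypass the $p,p'$ restrictions of (ii) and (iii).
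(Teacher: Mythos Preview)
Your reduction to a nonzero continuous map $\cO^p_\infty\to\cO^{p'}(Q')$ via Lemma~\ref{lem:linfty} is exactly what the paper does. After that point, however, your argument diverges from the paper's and has real gaps.

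For cases (ii) and (iii) you invoke \cite{chris1}*{Theorem~9.2} as if it rules out nonzero continuous homomorphisms $\cO^p_\infty\to\caL(L^{p'}(Y))$ for arbitrary $\sigma$-finite $Y$. That theorem is about maps between $L^p$-Cuntz algebras, not into general $\caL(L^{p'}(Y))$. The paper instead composes with a spatial representation of $L_{Q'}$ on $L^{p'}(X)$ with $X\in\{\N,[0,1]\}$, then applies \cite{chris1}*{Lemma~9.1} to conclude that $L^{p'}(X)$ contains an isomorphic copy of $\ell^p(\N)$, and finally appeals to classical Banach space facts: for $X=\N$ this is impossible for any $p\ne p'$ (Lindenstrauss--Tzafriri), while for $X=[0,1]$ it is impossible precisely under hypotheses (ii) or (iii) (Albiac--Kalton). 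You flag the first of these issues as an ``obstacle'' but do not resolve it; the reduction to $X\in\{\N,[0,1]\}$ and the use of Lemma~9.1 rather than Theorem~9.2 are the missing ingredients.

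For case (i) your dichotomy argument is both more complicated than needed and incomplete. The matricial sub-case sketch (``no room for an infinite orthogonal family of idempotents'') does not go through as stated: the $p'$-operator closure of $M_\infty$ is the compact operators on $\ell^{p'}(\N)$, which certainly contains such families. The paper's argument is much shorter and bypasses the dichotomy entirely: since $L_{Q'}$ is simple, Proposition~\ref{prop:sigmarep} and Theorem~\ref{thm:indep} together give an isometric realization of $\cO^{p'}(Q')$ inside $\caL(\ell^{p'}(\N))$, so one may take $X=\N$ above. Then the $\ell^p\not\hookrightarrow\ell^{p'}$ fact applies with \emph{no} restriction on $p,p'$ beyond $p\ne p'$, which is exactly why case (i) escapes the constraints of (ii) and (iii).
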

\begin{proof} Assume there is a nonzero continuous homomorphism $f:\cO^p(Q)\to \cO^{p'}(Q')$. Because the inclusion $L_Q\subset\cO^p(Q)$ is dense, $f(L_Q)\ne 0$, which in view of the simplicity of $L_Q$ implies that $f$ is injective on $L_Q$. Let $\phi:L_\infty\to L_Q$ be as in Lemma \ref{lem:linfty}. Then $f\phi$ is injective, whence $f\hat{\phi}:\cO^p_\infty\to \cO^{p'}(Q')$ is a nonzero continuous homomorphism. Hence there exists $X\in\{\N,[0,1]\}$ and a spatial representation $\rho':L_{Q'}\to \caL(L^{p'}(X))$ such that $\hat{\rho'}f\hat{\phi}:\cO^p_\infty\to\caL(L^{p'}(X))$ is nonzero. By \cite{chris1}*{Lemma 9.1} this implies that $L^{p'}(X)$ contains a subspace isomorphic to $\ell^p(\N)$. If $X=\N$, this cannot be, as noted in the proof of \cite{chris1}*{Theorem 9.2}, by \cite{lindtzaf}*{page 54}; if $X=[0,1]$ and either ii) or iii) holds, this cannot happen either, by \cite{albikalton}*{Theorem 6.4.19}. Thus parts ii) and iii) of the theorem are proved. Part i) also follows, using Proposition \ref{prop:sigmarep} and Theorem \ref{thm:indep}. 
\end{proof}

\begin{bibdiv}
\begin{biblist}

\bib{libro_Pere}{book}{
author={Abrams, Gene},
author={Ara, Pere},
author={Siles Molina, Mercedes},
title={Leavitt path algebras}, 
date={2017},
series={Lecture Notes in Math.},
volume={2008},
publisher={Springer},
doi={$10.1007/978-1-4471-7344-1$},
}

\bib{Abr-Pino2}{article}{
   author={Abrams, G.},
   author={Aranda Pino, G.},
   title={The Leavitt path algebras of arbitrary graphs},
   journal={Houston J. Math.},
   volume={34},
   date={2008},
   number={2},
   pages={423--442},
   issn={0362-1588},
   review={\MR{2417402}},
}

\bib{abratom}{article}{
   author={Abrams, Gene},
   author={Tomforde, Mark},
   title={Isomorphism and Morita equivalence of graph algebras},
   journal={Trans. Amer. Math. Soc.},
   volume={363},
   date={2011},
   number={7},
   pages={3733--3767},
   issn={0002-9947},
   review={\MR{2775826}},
	}
\bib{albikalton}{book}{
   author={Albiac, Fernando},
   author={Kalton, Nigel J.},
   title={Topics in Banach space theory},
   series={Graduate Texts in Mathematics},
   volume={233},
   publisher={Springer, New York},
   date={2006},
   pages={xii+373},
   isbn={978-0387-28141-4},
   isbn={0-387-28141-X},
   review={\MR{2192298}},
}
\bib{ando}{article}{
 author={And\^o, T.},
   title={Contractive projections in $L\sb{p}$ spaces},
   journal={Pacific J. Math.},
   volume={17},
   date={1966},
   pages={391--405},
   issn={0030-8730},
   review={\MR{0192340}},
}

\bib{orlosim}{article}{
   author={Clark, Lisa Orloff},
   author={Sims, Aidan},
   title={Equivalent groupoids have Morita equivalent Steinberg algebras},
   journal={J. Pure Appl. Algebra},
   volume={219},
   date={2015},
   number={6},
   pages={2062--2075},
   issn={0022-4049},
   review={\MR{3299719}},
}
\bib{exel}{article}{
   author={Exel, Ruy},
   title={Inverse semigroups and combinatorial $C^\ast$-algebras},
   journal={Bull. Braz. Math. Soc. (N.S.)},
   volume={39},
   date={2008},
   number={2},
   pages={191--313},
   issn={1678-7544},
   review={\MR{2419901}},
}
\bib{gardelupi}{article}{
   author={Gardella, Eusebio},
   author={Lupini, Martino},
   title={Representations of \'etale groupoids on $L^p$-spaces},
   journal={Adv. Math.},
   volume={318},
   date={2017},
   pages={233--278},
   issn={0001-8708},
   review={\MR{3689741}},
}

\bib{gardethiel1}{article}{
 author={Gardella, Eusebio},
   author={Thiel, Hannes},
   title={Quotients of Banach algebras acting on $L^p$-spaces},
   journal={Adv. Math.},
   volume={296},
   date={2016},
   pages={85--92},
   issn={0001-8708},
   review={\MR{3490763}},
}

\bib{gardethiel2}{article}{
author={Gardella, Eusebio},
   author={Thiel, Hannes},
   title={Extending representations of Banach algebras to their biduals},
	 eprint={arXiv:1703.00882v1},
}
\bib{lindtzaf}{book}{
   author={Lindenstrauss, Joram},
   author={Tzafriri, Lior},
   title={Classical Banach spaces. I},
   note={Sequence spaces;
   Ergebnisse der Mathematik und ihrer Grenzgebiete, Vol. 92},
   publisher={Springer-Verlag, Berlin-New York},
   date={1977},
   pages={xiii+188},
   isbn={3-540-08072-4},
   review={\MR{0500056}},
}

\bib{chris1}{article}{
author={Phillips, N. Christopher},
title={Analogs of Cuntz algebras on $L^p$ spaces}, 
eprint={arXiv:1201.4196},
}
\bib{chris2}{article}{
author={Phillips, N. Christopher},
title={Crossed products of $L^p$-operator algebras and the $K$-theory of Cuntz algebras on $L^p$-spaces}, 
eprint={arXiv:1309.6406},
}
\bib{chris3}{article}{
author={Phillips, N. Christopher},
title={Simplicity of $UHF$ and Cuntz algebras on $L^p$-spaces}, 
eprint={arXiv:1309.0015},
}
\bib{eugethesis}{thesis}{
author={Rodr{\'\i }guez, Mar\'\i a Eugenia},
title={\'Algebras de operadores en espacios $L^p$ asociadas a grafos orientados},
type={PhD thesis},
address={Buenos Aires},
date={2016},
eprint={http://cms.dm.uba.ar/academico/carreras/doctorado/tesis-doc_RODRIGUEZ,MARIA\% 20EUGENIA.pdf}
}

\bib{stein}{article}{
   author={Steinberg, Benjamin},
   title={A groupoid approach to discrete inverse semigroup algebras},
   journal={Adv. Math.},
   volume={223},
   date={2010},
   number={2},
   pages={689--727},
   issn={0001-8708},
   review={\MR{2565546}},
}
\bib{stein2}{article}{
   author={Steinberg, Benjamin},
   title={Simplicity, primitivity and semiprimitivity of \'etale groupoid
   algebras with applications to inverse semigroup algebras},
   journal={J. Pure Appl. Algebra},
   volume={220},
   date={2016},
   number={3},
   pages={1035--1054},
   issn={0022-4049},
   review={\MR{3414406}},
}

\end{biblist}
\end{bibdiv}

\end{document}